\DeclarePairedDelimiter{\abs}{\lvert}{\rvert}
\newcommand{\N}{\mathbb{N}}
\newcommand{\bG}{\mathbb{G}}
\newcommand{\bE}{\mathbb{E}}
\renewcommand{\le}{\leqslant}
\newcommand{\Ep}{\mathbb{E}}
\renewcommand{\Pr}{\mathrm{Pr}}
\newcommand{\mF}{\mathcal{F}}
\newcommand{\mA}{\mathcal{A}}
\newcommand{\mB}{\mathcal{B}}
\newcommand{\mD}{\mathcal{D}}
\newcommand{\mN}{\mathcal{N}}
\newcommand{\mJ}{\mathcal{J}}
\newcommand{\mX}{\mathcal{X}}
\newcommand{\lip}{\operatorname{Lip}}
\DeclareMathOperator{\Exp}{Exp}
\newcommand{\cvd}{\stackrel{d}{\rightarrow}}
\renewcommand{\hat}{\widehat}
\renewcommand{\tilde}{\widetilde}
\newtheorem{theorem}{Theorem}
\newtheorem{lemma}{Lemma}
\newtheorem{corollary}{Corollary}
\newtheorem{assumption}{Assumption}
\newtheorem{remark}{Remark}
\newcommand{\rc}{\color{red}}
\title{Hypothesis Test and Confidence Analysis with \\Wasserstein Distance on General Dimension}
\author{Masaaki Imaizumi$^{\displaystyle 1, \displaystyle 3}$, Hirofumi Ota$^{\displaystyle 2}$, Takuo Hamaguchi$^{\displaystyle 1}$}
\address{$^1$The University of Tokyo, $^2$Rutgers University, $^3$RIKEN AIP}
\thanks{\textit{Contact:} \texttt{imaizumi@g.ecc.u-tokyo.ac.jp}}
\begin{document}

\maketitle

\begin{abstract}
We develop a general framework for statistical inference with the {$1$-Wasserstein distance}. Recently, the Wasserstein distance has attracted considerable attention and has been widely applied to various machine learning tasks because of its excellent properties. However, hypothesis tests and a confidence analysis for the Wasserstein distance have not been established in a general multivariate setting. This is because the limit distribution of the empirical distribution with the Wasserstein distance is unavailable without strong restriction. To address this problem, in this study, we develop a novel \textit{non-asymptotic Gaussian approximation} for the empirical {$1$-Wasserstein distance}. Using the approximation method, we develop a hypothesis test and confidence analysis for the empirical {$1$-Wasserstein distance}. Additionally, we provide a theoretical guarantee and an efficient algorithm for the proposed approximation. Our experiments validate its performance numerically.
\end{abstract}

\section{Introduction}

    \textit{The Wasserstein distance} \citep{vaserstein1969markov} has attracted significant attention as a criterion for the discrepancy between two probability measures.
    It is based on the natural mathematical formulation of the optimal transportation \citep{villani2008optimal}.
    Recently, it has been demonstrated that the transportation notion is suitable for pattern analysis and feature extraction from data.
    The Wasserstein distance can be used to measure the discrepancy in the data distributions with singular supports, such as real image data, whereas standard measures, such as the Kullback-Leibler divergence, fail at doing so. 
    Owing to this advantage, the Wasserstein distance has been utilized extensively in machine learning and related fields, such as in generative models \citep{arjovsky2017wasserstein}, supervised learning \citep{frogner2015learning}, medical image analysis \citep{ruttenberg2013quantifying}, genomics \citep{evans2012phylogenetic}, and computer vision \citep{ni2009local}.

    Despite this significance, \textit{statistical inference} (e.g., a goodness-of-fit test) using the Wasserstein distance is severely hindered.
    A goodness-of-fit test is a fundamental and vital method to evaluate the uncertainty of the observed distributions rigorously, and it is extensively studied considering various distances such as the Kullback-Leibler divergence \citep{massey1951kolmogorov,vasicek1976test,song2002goodness,lloyd2015statistical}.
    However, statistical inference using the Wasserstein distance is possible only with univariate data \citep{munk1998nonparametric,del2000contributions,ramdas2017wasserstein,bernton2017inference,del2019central} or discrete-valued data \citep{sommerfeld2018inference,tameling2017empirical,bigot2017central}.
    For general multivariate data, a Cases where the distribution is Gaussian have been investigated by  \citet{del2019central2}, and 
    {a one-sample test is developed by  \citet{hallin2021multivariate}.}
    However, it is difficult to conduct statistical inferences in general situations including non-Gaussian cases or a two-sample test.
    This limitation regarding statistical inference can be attributed to the obscure limit distribution of the Wasserstein distance, and addressing this difficulty has been an important open question as described in Section 3 of a review paper \citep{panaretos2018statistical}.

    In this study, we aim to overcome this limitation by developing a \textit{non-asymptotic Gaussian approximation} \citep{chernozhukov2013gaussian,chernozhukov2014gaussian} for the empirical {$1$-Wasserstein distance}.
    We approximate the distance using a supremum of Gaussian processes and prove that it is a consistent approximator.
    Importantly, the approximation does not require a limit distribution, hence we can avoid the problem of an unavailable limit.
    Consequently, we can evaluate the uncertainty of the empirical {$1$-Wasserstein distance} for general multivariate data without substantial restrictions.
    Intuitively, we show that it is possible to infer the empirical {$1$-Wasserstein distance} with a non-asymptotic distributions even when it does not have asymptotic distributions due to its supremum over the large function space.
    For practical applications, we propose an efficient multiplier bootstrap algorithm to calculate the approximator.
    Based on the approximation scheme, we develop a goodness-of-fit test that can control the type I error, and also demonstrate its performance through several experiments.

    The contributions of this paper can be summarized as follows:
    \begin{enumerate}
        \setlength{\parskip}{0cm}
        \setlength{\itemsep}{0cm}
        \item[(i)] An approximation scheme is developed for the empirical {$1$-Wasserstein distance}.
        This scheme is applicable to general multivariate data without strong assumptions, and answers the question of statistical inference using the {$1$-Wasserstein distance}.
        \item[(ii)] A multiplier bootstrap method is proposed to conduct the statistical inference, which is computationally more efficient than other bootstrap and numerical methods.
        \item[(iii)] One-sample and two-sample hypothesis test schemes and a model selection algorithm are developed based on the {$1$-Wasserstein distance}.
    \end{enumerate}

\subsection{Notation}
The $j$-th element of vector $b \in \mathbb{R}^d$ is denoted by $b_{j}$, and $\|\cdot\|_q := (\sum_j b_j^q)^{1/q}$ is the $q$-norm ($q \in [0,\infty]$).
For a matrix $A\in \mathbb{R}^{d \times d'}$, $\|A\|_q := \|\mbox{vec}(A)\|_q$ where $\mbox{vec}(\cdot)$ is a vectorization operator.
$\delta_x$ is the Dirac measure on $x$.
For $x,x' \in \mathbb{R}$, $x \wedge x' := \min\{x,x'\}$.
All proofs of theorems and lemmas are presented in the supplementary material.

\section{Preparation}

First, we formally define the Wasserstein distance as the distance between two probability measures obtained using transportation between the measures.
Let $(\mX, d)$ be a complete metric space with metric $d:\mX \times \mX \to \mathbb{R}_+$. 
For $p \geq 1$, the $p$-Wasserstein distance between two Borel probability measure $\mu_1$ and $\mu_2$ is defined as {
\begin{align*}
    W_p(\mu_1, \mu_2) :=  \inf_{ \nu \in \Pi(\mu_1, \mu_2)} \left(\int_{\mX\times \mX}d(x, y)^p\nu(dx, dy) \right)^{1/p}
\end{align*}
}
where $\Pi(\mu_1, \mu_2)$ is the set of all Borel probability measures on $\mX \times \mX$ with marginals $\mu_1$ and $\mu_2$. 

The formal problem statement of this study is as follows.

\textbf{Problem Formulation}:
We aim to approximate the empirical {$1$-Wasserstein distance $W_1(\mu_n,\mu)$.}
Let $\mu$ be a probability measure on a sample space $\mX=[0,1]^d$ and $d(x,x') = \|x-x'\|_2$, and $\mD_n^X := \{X_1,...,X_n\}$ be a set of $n$ independent and identical observations from $\mu$.
Let $\mu_n := n^{-1}\sum_{i=1}^n \delta_{X_i}$ be an empirical probability measure.
Regardless to say, {$W_1(\mu_n,\mu)$} is a random variable owing to the randomness of $\mD_n^X$.
We are interested in approximating the distribution of {$W_1(\mu_n,\mu)$} with a tractable random variable.
{
Rigorously, we aim to derive a random variable $Z_n$ and a scale term $R_n \in \mathbb{R}$ depending on $n$ such that 
\begin{align*}
    \abs*{Z_n -  R_n W_1(\mu_n,\mu)} = o_P(1),
\end{align*}
as $n \to \infty$.
In Section \ref{sec:non-asymptotic-approx} for introducing our methodology, we will give a specific form of $Z_n$ as a maximum value of a Gaussian process, and $R_n = \sqrt{n/S}$ where $S$ is a number of parameters of models for approximating {$W_1(\mu_n,\mu)$}.
}
Such the approximation for {$W_1(\mu_n,\mu)$} is necessary for various statistical inference methods, such as a goodness-of-fit test and a confidence analysis.

\subsection{Preparation}

\textbf{Dual form of 1-Wasserstein distance:}
The {$1$-Wasserstein distance} has the following duality \citep{villani2008optimal}:
\begin{align}
    {W_1(\mu_1, \mu_2)} = \sup _ { \phi \in \lip(\mX) } \int \phi(y)\mu_2 ( dy ) \ - \int \phi ( x )  \mu_1 ( dx ), \label{eq:dual}
\end{align}
where $\lip(\mX)$ is a set of Lipschitz-continuous functions on $\mX$ with Lipschitz constants as $1$. 

\textbf{Functions by deep neural network:}
We provide a class of functions in the form of deep neural networks (DNNs).
Let $\Xi(L, S)$ be a class of functions by DNN with $L$ hidden layers, and $S$ non-zero parameters (edges).
Let $A_\ell$ and $b_\ell$ be a matrix and a vector parameter, respectivelly, for the $\ell$-th layer, and $\sigma_b(x):=\sigma(x+b)$ be a ReLU activation function with a shift parameter $b$.
Then, $\Xi(L,S)$ is a set of functions $f:[0,1]^d \to [0,1]$ with the following form
\begin{align*}
    f(x) = A_{L+1} \sigma_{b_{L}}A_{L} \sigma_{b_{L-1}} \cdots A_{2} \sigma_{b_{1}}A_{1}x + b_{L+1},
\end{align*}
such that $  \sum_{\ell =1}^{L+1} (\|A_{\ell}\|_0 + \|b_\ell\|_0)=S $.%

\subsection{Related Work and Difficulty of General Case} \label{sec:related}

Numerous studies have been conducted on statistical inference using the Wasserstein distance in restricted settings.
However, determining statistical inference for general multivariate data has remained unsolved, and our study aims to addresses the question.
We briefly review the related studies and describe the problem source.

\textbf{Univariate case}: 
When $X_i$'s are univariate (i.e. $\mX \subseteq \mathbb{R}$), the Wasserstein distance is written as an inverse of the distribution function. 
Let $F$ be a distribution function of data and $F_n$ be an empirical distribution of generated data.
Then, we can derive {$W_p(F,F_n) = (\int_0^1|F^{-1}(s)-F_n^{-1}(s)|^p ds)^{1/p}$} (see \citep{ambrosio2008gradient}).
Several studies \citep{munk1998nonparametric,del1999tests,ramdas2017wasserstein,del2019central} have derived an asymptotic distribution of $W(F_n,F)$ as a limit of the process with $F_n^{-1}$ and $F^{-1}$.
\cite{bernton2017inference} developed an inference method based on an extended version of the limit distribution.
Obviously, the inverses $F_n^{-1}$ and $F^{-1}$ are valid only in the univariate case, and hence, are not applicable to multivariate cases.

\textbf{Discrete case}:
Conisdering that $X_i$ are discrete random variables (i.e. $\mX$ is a finite space),  \cite{sommerfeld2018inference} showed that {$\sqrt{n}W_p(\mu_n,\mu) \cvd \max_{u \in U} \langle Z,u \rangle $}, where $Z$ is a centered Gaussian random variable and $U$ is a suitable convex set.
Some works \citep{tameling2017empirical,bigot2017central} have inherited this result and developed inference methods.

\begin{figure}
  \begin{center}
   \includegraphics[width=0.45\textwidth]{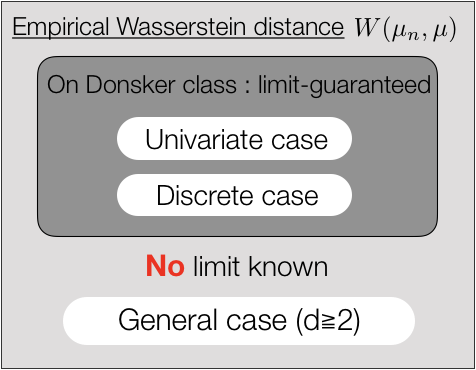}
  \end{center}
  \caption{Class of empirical Wasserstein distances. We can utilize a limit distribution when an index set for processes lies in a Donsker class when $X_i$ are discrete or univariate. However, if $X_i$ are multivariate, { $W_1(\mu_n,\mu)$} is not a process on a Donsker class, and hence, we cannot use its limit distribution. \label{fig:class}}
\end{figure}


\textbf{Multi-dimensional case}:
For the $\mX \subset \mathbb{R}^d$ with $d \geq 2$, a limit distribution of {$W_p(\mu_n,\mu)$} is not available, hence it is difficult to evaluate its uncertainty, except a Gaussian case: \citep{del2019central2} showed that $W_2(\mu_n,\mu)$ has asymptotic normality when $\mu$ is Gaussian.
Based on the dual form \eqref{eq:dual}, {the empirical $1$-Wasserstein distance $W_1(\mu_n,\mu)$} is regarded as an empirical process on $\lip(\mX)$.
However, based on the empirical process theory \citep{vdVW1996}, $\lip(\mX)$ with $d \geq 2$ is considered to be significantly broad, and thus the empirical process does not converge to a known limit distribution.
In other words, {$W_1(\mu_n,\mu)$} is not guaranteed to have a tractable limit distribution as $n \to \infty$.
It is based on the theory of the \textit{Donsker class}, and the limit of {$W_1(\mu_n,\mu)$} is intractable as $\lip(\mX)$ does not belong to the Donsker class with $d \geq 2$ (Summarized in Chapter 2 of \cite{vdVW1996}).
The fact is illustrated in Figure \ref{fig:class}.
We note that the bootstrap approaches (e.g., an empirical bootstrap) are not validated due to the limit problem. 

{
\citet{hallin2021multivariate} developed a goodness-of-fit test with the Wasserstein distance in a different way. 
The method simulates a fixed null distribution, which allows for one-sample tests under general multivariate distributions. 
Since this approach is not based on an asymptotic distribution of the empirical Wasserstein distance, the differences appear in the availability of confidence intervals and guarantees for the bootstrap method.
}

\textbf{Concentration inequality-type interval}: 
A concentration inequality-based method is an alternative approach for confidence intervals.
Several studies \citep{fournier2015rate,weed2019sharp} showed that {$W_p(\mu_n,\mu)$} on $\mathbb{R}^d$ is upper bounded by $C n^{-1/d}$ with high-probability and some constant $C$, hence it is possible to construct a confidence interval with its width $O(n^{-1/d})$.
However, this approach needs to estimate unknown constants. 

\textbf{Extension of Wasserstein distances}
In recent years, under several extended versions of Wasserstein distances, their asymptotic distribution has been derived in general cases.
For example, the sliced Wasserstein distance \citep{nadjahi2019asymptotic}, the smoothed Wasserstein distance \citep{goldfeld2020asymptotic,chen2021asymptotics}, the entropic regularized Wasserstein distance \citep{mena2019statistical,bigot2019central}, and the projection robust Wasserstein distance \citep{lin2021projection}.
Although these distances are different from the original Wasserstein distance, we describe them here for reference.

\section{Non-Asymptotic Approximation}   \label{sec:non-asymptotic-approx}

First, we derive a random variable that approximates {$W_1(\mu_n,\mu)$} (Theorem \ref{thm:GAR_kolmogorov_2} and Corollary \ref{cor:main}).
Next, we present a computationally efficient algorithm (Algorithm \ref{alg:GMB}) and its theoretical validity (Theorem \ref{thm:GMB}).

\subsection{Approximation Theory} \label{sec:approx}
Our approximation scheme contains three steps: (i) approximation of $\lip(\mX)$ in \eqref{eq:dual}, (ii) development of a non-asymptotic Gaussian approximation for {$W_1(\mu_n,\mu)$}, and (iii) combine of the first two steps.

\textbf{Step (i). Approximation by DNNs}:
To calculate the supremum on $\lip(\mX)$ on \eqref{eq:dual}, we introduce $\Xi(L,S)$ and represent the Lipschitz functions by DNNs.
We define a restricted functional class by DNNs with a parameter $\gamma > 0$ as $\Xi^{\lip,\gamma}(L,S) \subset \Xi(L,S)$ such that $|f(x)-f(x')|\leq \|x-x'\|_2 + \gamma,\forall x,x' \in \mX$ holds for any $f \in \Xi^{\lip,\gamma}(L,S)$.
Then, we define {an approximated $1$-Wasserstein distance as}
\begin{align}
    {\hat{W}_1(\mu_1,\mu_2)} := \sup_{f \in \Xi^{\lip,\gamma}(L,S)} \Ep_{\mu_2}[f(X)] -\Ep_{\mu_1}[f(X)], \label{def:wasserstein_dnn}
\end{align}
for given $L$ and $S$.
In the following theorem, we show that {$\hat{W}_1(\mu_1,\mu_2)$} is arbitrarily close to {$ W_1(\mu_1,\mu_2)$} for any $\mu_1$ and $\mu_2$ as $S$ increases with finite $L$.
\begin{lemma}\label{lem:NN-approx}
    For any probability measures $\mu_1$ and $\mu_2$ on $\mX$ and for all sufficiently large $S \in \N, L = O((1+S)(1+1/d))$ and $\gamma = O(S^{-1/d})$, there exists a constant $c>0$ such that
    {
    \begin{align*}
            \abs*{ \hat{W}_1(\mu_1,\mu_2)- W_1(\mu_1,\mu_2)}  \le c S^{-1/d}. 
    \end{align*}
    }
\end{lemma}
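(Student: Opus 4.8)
The plan is to exploit the inclusion $\Xi^{\lip}(L,S) = \lip(\mX) \cap \Xi(L,S) \subseteq \lip(\mX)$, which collapses the two-sided estimate to a one-sided one. Because the supremum defining $\hat{W}(\mu_1,\mu_2)$ in \eqref{def:wasserstein_dnn} ranges over a subset of the feasible set in the Kantorovich--Rubinstein dual \eqref{eq:dual}, we get $\hat{W}(\mu_1,\mu_2) \le W(\mu_1,\mu_2)$ for free, so $|\hat{W}-W| = W - \hat{W}$. It therefore suffices to exhibit a single admissible network $f \in \Xi^{\lip}(L,S)$ whose dual objective is within $cS^{-1/d}$ of $W(\mu_1,\mu_2)$.

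To build such an $f$, I would first fix a near-maximizer $\phi^\ast \in \lip(\mX)$ of the dual, i.e. a $1$-Lipschitz function with $\Ep_{\mu_2}[\phi^\ast] - \Ep_{\mu_1}[\phi^\ast] \ge W(\mu_1,\mu_2) - \eta$ for arbitrarily small $\eta$ (on the compact domain $\mX=[0,1]^d$ the $1$-Lipschitz functions, normalized by a common anchor value using the shift-invariance $\phi \mapsto \phi + \mathrm{const}$ of the dual objective, are compact by Arzel\`a--Ascoli, so the supremum is in fact attained). The remaining task is to approximate $\phi^\ast$ in sup-norm by a ReLU network $f$ that \emph{retains} the $1$-Lipschitz property, so that $f \in \Xi^{\lip}(L,S)$. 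Once we have $\|f-\phi^\ast\|_\infty \le \delta$, each integral is controlled by $|\Ep_{\mu_i}[f]-\Ep_{\mu_i}[\phi^\ast]| \le \delta$ for $i=1,2$, so the dual objective of $f$ differs from that of $\phi^\ast$ by at most $2\delta$; combining with $\hat{W}(\mu_1,\mu_2) \ge \Ep_{\mu_2}[f]-\Ep_{\mu_1}[f]$ gives $W - \hat{W} \le \eta + 2\delta$. (The codomain constraint $f\colon[0,1]^d\to[0,1]$ is harmless here, since both the dual objective and the Lipschitz condition are invariant under shifting and scaling, so $\phi^\ast$ may be normalized into the network's output range before approximation.)

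The main obstacle is the \emph{simultaneous} control of the approximation accuracy and the exact $1$-Lipschitz constraint: a generic ReLU approximant of a $1$-Lipschitz function need not itself be $1$-Lipschitz. I would resolve this by approximating $\phi^\ast$ with a continuous piecewise-linear interpolant on a uniform simplicial partition of $[0,1]^d$ into $\asymp S$ cells. Such an interpolant is exactly representable by a ReLU network (ReLU networks realize precisely the continuous piecewise-linear maps), and a slight down-scaling by a factor $(1+o(1))^{-1}$ enforces the $1$-Lipschitz bound while perturbing the sup-norm only negligibly. The resolution of a partition with $S$ cells is $\delta \asymp S^{-1/d}$, which is exactly the source of the claimed rate and simultaneously pins down the architecture: encoding this piecewise-linear function requires $O(S)$ nonzero weights arranged in depth $L = O((1+S)(1+1/d))$, matching the hypothesis. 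Taking $\delta = c\,S^{-1/d}$ and letting $\eta \to 0$ then yields $W - \hat{W} \le 2c\,S^{-1/d}$, which is the asserted bound.
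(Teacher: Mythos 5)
Your overall skeleton is the same as the paper's: the inclusion $\Xi^{\lip}(L,S) \subseteq \lip(\mX)$ gives $\hat{W}(\mu_1,\mu_2) \le W(\mu_1,\mu_2)$ for free, and the bound reduces to exhibiting a single network $f \in \Xi^{\lip}(L,S)$ whose dual objective is within $O(S^{-1/d})$ of that of a (near-)optimal potential $\phi^*$, with the factor-two transfer from $\|f-\phi^*\|_\infty$ to the two expectations. The paper executes the key existence step by invoking Theorem \ref{thm:NN-approx} (adapted from Schmidt-Hieber), which directly supplies $\hat{f} \in \Xi(L,S)$ with $\|\phi^*-\hat{f}\|_{L_\infty} \le c'S^{-1/d}$ and all parameters bounded by $1$, and then places $\hat{f}$ in $\lip(\mX)$ on that basis. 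You replace this citation with an explicit construction --- piecewise-linear interpolation on a simplicial mesh of $\asymp S$ cells, followed by a rescaling --- and that replacement is where your argument breaks.

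The gap is your claim that the interpolant of a $1$-Lipschitz function is $(1+o(1))$-Lipschitz, so that down-scaling by $(1+o(1))^{-1}$ restores the constraint at negligible cost. This is false for $d \ge 2$ when, as in the definition of $\Xi^{\lip}(L,S)$, the Lipschitz condition is with respect to $\|\cdot\|_2$. Take $\phi^*(z) = \|z-p\|_2$ with $p$ a mesh vertex --- a prototypical Kantorovich potential, exactly $1$-Lipschitz. On the Kuhn-type cell with vertices $p$, $p+he_1$, $p+h(e_1+e_2)$, the affine interpolant has gradient $(1,\sqrt{2}-1)$ in the relevant coordinates, of Euclidean norm $\sqrt{4-2\sqrt{2}} \approx 1.08$, \emph{independently of} $h$; cone singularities of this kind inflate the interpolant's Lipschitz constant by a factor bounded away from $1$ (up to order $\sqrt{d}$) at every mesh resolution. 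Consequently the rescaling factor you need is a constant $\lambda<1$, and replacing $f$ by $\lambda f$ shifts the dual objective by $(1-\lambda)$ times the oscillation of $\phi^*$ --- an $O(1)$ error that destroys the $S^{-1/d}$ rate. Mollifying $\phi^*$ at bandwidth $\varepsilon$ before interpolating does give a $(1+O(h/\varepsilon))$-Lipschitz interpolant, but the mollification error $\varepsilon$ must then dominate $h$, and balancing yields only $O(S^{-1/(2d)})$. Two secondary points: the assertion that an interpolant on $\asymp S$ cells can be encoded with $O(S)$ nonzero ReLU weights is a nontrivial size bound, not a consequence of the mere representability of continuous piecewise-linear functions; and the codomain normalization is not "harmless by shifting and scaling," since scaling alters both the Lipschitz constant and the dual objective --- only shifts are free (a $1$-Lipschitz function on $[0,1]^d$ can have oscillation $\sqrt{d}>1$, so a shift alone cannot place it in $[0,1]$).
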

We utilize DNNs for approximating $\lip(\mX)$ owing to the following reasons.
The approximation by DNNs is often employed in the field of generative models, particularly for the Wasserstein-GAN \citep{arjovsky2017wasserstein}. Hence several convenient computational algorithms are developed, such as the spectral normalization \citep{miyato2018spectral}.
{
Additionally, DNNs theoretically and computationally work well with high-dimensional $\mX$, unlikely basis function approaches such as the Fourier and wavelet bases.
In detail, DNNs have a better error rate than a linear sum of basis functions for approximating several class of functions \citep{schmidt2017nonparametric,imaizumi2019deep,imaizumi2020advantage}. 
Also, since wavelets require $C^d$ basis functions with some $C>0$ to approximate functions with $d$-dimensional input (see \cite{cohen1993wavelets} for details), the computational complexity increases exponentially as $d$ increases, but DNNs can avoid this problem.
}

\textbf{Step (ii). Non-asymptotic Gaussian approximation}:
As a fundamental basis of this study, we develop a \textit{non-asymptotic Gaussian approximation} by applying an approximation scheme \citep{chernozhukov2013gaussian,chernozhukov2014gaussian}.
The scheme approximates an empirical stochastic process on an index set $\mF$ using the tractable supremum of the stochastic processes.
Importantly, the approximation scheme does not require a limit distribution of the empirical process.

We provide an overview of the scheme proposed by \cite{chernozhukov2014gaussian}.
Let $Z_\mF$ be a random variable by an empirical process
\begin{align}
    Z_\mF := \sup_{f \in \mF} \frac{1}{\sqrt{n}} \sum_{i=1}^n \{ f(X_i) - \Ep_\mu[f(X)]\}, \label{def:emp_proc}
\end{align}
with some set of functions $\mF$.
To approximate a distribution of $Z_\mF$, the scheme derives a stochastic process $\{G_\mF(f),f \in \mF\}$ with zero mean and known covariance.
Then, the scheme approximates the random variable $Z_\mF$ using a supremum of $G_\mF(f)$ on $\mF$.
Theorem 2.1 in \cite{chernozhukov2014gaussian} proves that
\begin{align*}
    \abs*{ Z_\mF - \sup_{f \in \mF}G_\mF(f) } \leq c_A n^{-1/6},
\end{align*}
holds for each $n$ with a constant $c_A > 0$ with high probability approaching to $1$.
Here, the approximator $\sup_{f \in \mF}G_\mF(f)$ does not depend on $\lim_{n \to \infty}Z_\mF$ (Note that $G_\mF(f)$ does \textit{not} have to be a limit of $Z_\mF$), hence the result holds regardless of the limit distribution of $Z_\mF$.

\begin{table*}[t]
  \label{sample-table}
  \centering
  \begin{tabular}{ccccccc}
    \toprule
    $W(\mu_n,\mu)$ &$\leftrightarrow$ &$\hat{W}(\mu_n,\mu)$&$\leftrightarrow$ &$\hat{Z}_W$ &$\leftrightarrow$ &$\tilde{Z}_W$\\
    \begin{tabular}{c} Wasserstein \\ distance \end{tabular} && \begin{tabular}{c} DNN \\ Approx. \eqref{def:wasserstein_dnn} \end{tabular}   && \begin{tabular}{c} Gaussian \\ Approx. \eqref{eq:sup_GP} \end{tabular}   && Algorithm \ref{alg:GMB}\\
    \bottomrule
  \end{tabular}
  \caption{List of the approximators. \label{table:list}}
\end{table*}

We extend the scheme and approximate the empirical {$1$-Wasserstein distance} using DNNs
\begin{align*}
    {\hat{W}_1(\mu_n,\mu)} = \sup_{f \in \Xi^{\lip,\gamma}(L,S)} \frac{1}{n}\sum_{i=1}^n \{ f(X_i) -\Ep_{\mu}[f(X)]\}.
\end{align*}
For the approximation, we derive a suitable stochastic process on $\Xi^{\lip,\gamma}(L,S)$.
We introduce a scaled Gaussian process $\{G_W(f):f \in \Xi^{\lip,\gamma}(L,S)\}$ whose mean is $0$ and its covariance function is $\bE_{X \sim \mu}[f(X)f'(X)] S^{-1} $ for $ f,f' \in \Xi^{\lip,\gamma}(L,S)$.
For any $f,f' \in \Xi^{\lip,\gamma}(L,S)$, $G_W(f)$ is a Gaussian random variable with its mean $\Ep[G_W(f)]=0$ and covariance $\Ep[G_{W}(f)G_{W}(f')]=\bE_\mu[f(X)f'(X)]S^{-1}$ using a scaling term $S^{-1}$.
Then, we define the following random variable
\begin{align}
    \hat{Z}_W := \sup_{f \in \Xi^{\lip,\gamma}(L,S)} G_W(f), \label{eq:sup_GP}
\end{align}
as an approximator for { $\hat{W}_1(\mu_n,\mu)$}.
Its validity is proved in the following lemma:
\begin{lemma}[Approximation for {$\hat{W}_1(\mu_n,\mu)$}]\label{lem:intermediateGAR}
Set $L=O(\log^2 n)$, and $S=S_n$ as $(S_n \log^6 n)/n \to 0$ as $n \to \infty$.
Then, we obtain
\begin{align*}
    &\Pr \left( \abs*{  \sqrt{\frac{n}{S}} {\hat{W}_1(\mu_n,\mu)} - \hat{Z}_W }  \leq \frac{c_W \sigma^2S^{1/6} \log n}{n^{1/6}} \right)\rightarrow 1, ~ (n \to \infty),
\end{align*}
where $\sigma^2 := \sup_{f,f' \in \Xi^{\lip,\gamma}(L,S)} \Ep_{X \sim \mu}[f(X)f'(X)]$ and $c_W > 0$ is an existing constant.
\end{lemma}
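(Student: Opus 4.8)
The plan is to reduce the statement to the non-asymptotic Gaussian approximation of Chernozhukov--Chetverikov--Kato (Theorem 2.1 of \cite{chernozhukov2014gaussian}) by a simple rescaling, and then to feed in a complexity bound for the DNN class $\Xi^{\lip}(L,S)$. First I would record the scaling identity. Writing $\mF := \Xi^{\lip}(L,S)$ and $Z_\mF := \sup_{f \in \mF} n^{-1/2} \sum_{i=1}^n \{f(X_i) - \Ep_\mu[f(X)]\}$, one has $\sqrt{n/S}\,\hat{W}(\mu_n,\mu) = S^{-1/2} Z_\mF$. Likewise, if $G_\mF := \sqrt{S}\, G_W$ denotes the unit-scale Gaussian process on $\mF$ whose covariance is $\Ep_\mu[f(X)f'(X)]$, then $\hat{Z}_W = S^{-1/2}\sup_{f \in \mF} G_\mF(f)$. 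Hence the quantity inside the probability equals $S^{-1/2}\,|Z_\mF - \sup_{f\in\mF} G_\mF(f)|$, and it suffices to bound $|Z_\mF - \sup_{f\in\mF} G_\mF(f)|$ by the CCK coupling and then multiply by $S^{-1/2}$.

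Next I would verify the hypotheses needed to apply the approximation to $\mF$. The class admits a uniform envelope $b = 1$ because every $f \in \mF$ takes values in $[0,1]$, and its variance proxy is exactly $\sigma^2 = \sup_{f,f' \in \mF}\Ep_\mu[f(X)f'(X)] \le 1$, matching the statement. The essential input is a uniform-entropy (VC-type) bound for the ReLU network class: using known covering-number estimates for networks with $S$ nonzero weights and $L$ layers, $\mF$ is VC-type with index $v \lesssim S L \log S$, so that under $L = O(\log^2 n)$ the index is $S$ up to polylogarithmic factors in $n$. Here I would note that the Lipschitz constraint defining $\Xi^{\lip}(L,S)$ only shrinks the class, so it cannot worsen these entropy estimates.

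With these ingredients, I would insert $b$, $\sigma^2$, and $v$ into the CCK rate, which is of order $n^{-1/6}$ up to factors of the variance, of $v^{2/3}$, and of $\log n$, and then multiply by the $S^{-1/2}$ scaling. The complexity factor, of order $v^{2/3}$ with $v \lesssim S L\log S$, combined with the $S^{-1/2}$ scaling yields the $S^{1/6}$ dependence in $S$, while the logarithmic factors coming from $v$, from $L = O(\log^2 n)$, and from the approximation are controlled by $\log n$; this is precisely where the choice $L = O(\log^2 n)$ is used. The side condition $(S_n \log^6 n)/n \to 0$ ensures that the subordinate $O(n^{-1/2})$ remainder terms in the CCK bound are dominated by the leading $n^{-1/6}$ term and that the whole right-hand side tends to $0$, so the stated probability converges to $1$.

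The hard part will be the DNN-complexity step together with the exponent bookkeeping: one must produce an entropy bound for $\Xi^{\lip}(L,S)$ that is sharp enough in both $S$ and $L$, and then track the exponents through the CCK inequality so that, after the $S^{-1/2}$ rescaling, the $S$-dependence collapses to exactly $S^{1/6}$ and the logarithmic and variance factors reduce to the claimed $\sigma^2\log n$ form (which at a few places requires crude absorption of constants and of extra powers of $\log S$ into $c_W$). A secondary but genuine subtlety is covariance matching: the empirical process $Z_\mF$ has covariance $\Ep_\mu[f f'] - \Ep_\mu[f]\Ep_\mu[f']$, whereas $G_\mF$ is defined with the uncentered covariance $\Ep_\mu[f f']$, so I would either center the class before coupling or exploit invariance of the supremum under a constant shift, and verify that the induced correction is negligible at the claimed rate.
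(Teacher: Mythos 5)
Your proposal is correct and follows essentially the same route as the paper's own proof: apply the CCK coupling for suprema of empirical processes on a VC-type class (the paper's Theorem \ref{thm:CCK}, i.e.\ Corollary 2.2 of Chernozhukov--Chetverikov--Kato, 2016) to $\Xi^{\lip}(L,S)$ using the covering-number bound of Lemma \ref{lem:entropy} with $K_{S,n}=O(S\log n)$, rescale by $S^{-1/2}$ (which is exactly how the paper turns the $K_{S,n}^{2/3}$ complexity factor into $S^{1/6}$), and choose $\gamma\asymp 1/\log n$ so that the coupling probability tends to one while the $\gamma^{-1/3}$ and polylogarithmic factors are absorbed into $c_W$ and the $n^{-1/6}$ term dominates under $S_n\log^6 n/n\to 0$. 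The covariance-matching subtlety you flag is genuine---the CCK coupling is stated for the $\mu$-Brownian bridge with covariance $\Ep_\mu[ff']-\Ep_\mu[f]\Ep_\mu[f']$, whereas $G_W$ is defined with the uncentered covariance $\Ep_\mu[ff']S^{-1}$---and the paper's proof silently ignores this discrepancy, so on that point your plan is actually more careful than the original argument.
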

Lemma \ref{lem:intermediateGAR} states that $\hat{Z}_W$ is a random variable that behaves sufficiently close to {$\hat{W}_1(\mu_n,\mu)$} with probability approaching to $1$.
The setting $S=o(n \log^{-6}n)$ guarantees that the bound $\frac{c_W \sigma^2S^{1/6} \log n}{n^{1/6}}$ converges to $0$ as $n$ increases.
Note that the result holds regardless of the limit distribution of {$\hat{W}_1(\mu_n,\mu)$}. Hence we avoid the unavailability problem of limit distributions discussed in Section \ref{sec:related}.

{
Furthermore, we note that the convergence rate in $n$ is reasonably fast, because it decays polynomially in $n$ although it approximates a random element on the infinite-dimensional space $\lip(\mX)$.
An ordinary Gaussian approximation theory cannot obtain such a polynomial rate in $n$ because of the infinite dimensionality.
Especially, the Berry-Esseen bound \citep{bentkus2005lyapunov} for Gaussian approximation on a $p$-dimensional space is $O(p^{1/4}n^{-1/2})$, which diverges when $p=\infty$.
In contrast, the Gaussian approximation theory we use, which focuses only on approximating the maximum of Gaussian processes, has a valid error bound in $n$ even with the infinite dimension.
}

\textbf{Step (iii): Distributional approximation}:
We combine the previous results and evaluate how $\hat{Z}_W$ approximates the empirical Wasserstein distance {$W_1(\mu_n,\mu)$}.
We specify $S$ to balance the approximation error by DNNs and that of the non-asymptotic Gaussian approximation.
The result is presented in the following theorem:

\begin{theorem}[Approximation for {$W_1(\mu_n,\mu)$}]\label{thm:GAR_kolmogorov_2} 
Set $L = O(\log^2 n)$ and $S = S_n$ satisfying $n^{2d/(3+2d)}\log^{1/3}n / S_n \to 0$ and $ n  \log^{-6} n / S_n \to \infty$ as $n \to \infty$.
Then, we have
\begin{align*}
    &\Pr \left( \abs[\Big]{\sqrt{\frac{n}{S}} {W_1(\mu_n,\mu)} - \hat{Z}_W } \leq \frac{c'_W \sigma^2 S^{1/6} \log n}{n^{1/6}} \right)\rightarrow 1, ~ (n \to \infty),
\end{align*}
where $c'_W > 0$ is an existing constant.
\end{theorem}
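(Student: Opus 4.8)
The plan is to combine the two approximation steps through the triangle inequality and then verify that the admissible range of $S_n$ forces the deep-network approximation error to be absorbed into the Gaussian-approximation error. For each realization of the data I would write
\[
\left|\sqrt{n/S}\,W(\mu_n,\mu) - \hat{Z}_W\right| \le \sqrt{n/S}\,\bigl|W(\mu_n,\mu)-\hat{W}(\mu_n,\mu)\bigr| + \left|\sqrt{n/S}\,\hat{W}(\mu_n,\mu)-\hat{Z}_W\right|,
\]
and bound the two summands separately.

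The second summand is controlled exactly by Lemma \ref{lem:intermediateGAR}: the hypotheses $L=O(\log^2 n)$ and $S_n<\overline{c}\,n/\log^6 n$ match that lemma's requirement $(S_n\log^6 n)/n\to 0$, so with probability tending to $1$ the second summand is at most $c_W\sigma^2 S^{1/6}\log n/n^{1/6}$. The first summand is deterministic: Lemma \ref{lem:NN-approx} holds for arbitrary measures, so it applies pointwise to the random pair $(\mu_n,\mu)$ and gives $|W(\mu_n,\mu)-\hat{W}(\mu_n,\mu)|\le cS^{-1/d}$, whence the first summand is at most $c\sqrt{n}\,S^{-1/2-1/d}$.

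It remains to show that this first summand is dominated by the Gaussian-approximation bound; this rate-balancing is the crux of the argument and the sole place where the \emph{lower} bound on $S_n$ is used. Comparing the two terms, I would form the ratio
\[
\frac{c\sqrt{n}\,S^{-1/2-1/d}}{c_W\sigma^2 S^{1/6}\log n/n^{1/6}} = \frac{c}{c_W\sigma^2}\cdot\frac{n^{2/3}\,S^{-2/3-1/d}}{\log n},
\]
so the deep-network error is negligible precisely when $S^{2/3+1/d}\gtrsim n^{2/3}/\log n$, i.e. when $S\gtrsim n^{2d/(3+2d)}$ up to logarithmic factors. The stated lower bound $S_n>\underline{c}\,n^{2d/(3+2d)}\log^{1/3}n$ is chosen to satisfy this with slack: substituting it, the exponent of $n$ in $S^{2/3+1/d}$ is $\tfrac{2d}{3+2d}\cdot\tfrac{2d+3}{3d}=\tfrac{2}{3}$, yielding $S^{2/3+1/d}\gtrsim n^{2/3}(\log n)^{(2d+3)/(9d)}$, so the ratio above is $O\bigl((\log n)^{-1-(2d+3)/(9d)}\bigr)=o(1)$. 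Here I treat $\sigma^2$ as a fixed positive constant, which is legitimate since $f,f'\in[0,1]$ force $\sigma^2\le 1$ and, once the class is rich, $\sigma^2$ is bounded away from $0$.

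Consequently, on the probability-$\to 1$ event of Lemma \ref{lem:intermediateGAR}, for all large $n$ the first summand is at most $c_W\sigma^2 S^{1/6}\log n/n^{1/6}$, and the two summands together are bounded by $c'_W\sigma^2 S^{1/6}\log n/n^{1/6}$ for any constant $c'_W>c_W$ (e.g. $c'_W=2c_W$), which finishes the proof. The only genuine obstacle is the rate-balancing above—matching the exponent $2d/(3+2d)$ and confirming the logarithmic slack—while everything else is a mechanical assembly of the two lemmas requiring no new estimate.
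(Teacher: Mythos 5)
Your proposal is correct and follows essentially the same route as the paper's proof: a triangle-inequality split into the DNN-approximation error (Lemma \ref{lem:NN-approx}) and the Gaussian-approximation error (Lemma \ref{lem:intermediateGAR}), followed by the same rate balancing in which the lower bound $S_n > \underline{c}\,n^{2d/(3+2d)}\log^{1/3}n$ makes the network bias term negligible against the $S^{1/6}\log n/n^{1/6}$ rate. The only cosmetic difference is that the paper splits at the level of probabilities and controls the bias term with Markov's inequality, whereas you exploit that Lemma \ref{lem:NN-approx} is a deterministic bound holding for every realization of $\mu_n$ and show direct domination --- which is, if anything, slightly cleaner.
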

Here, the multiplier $\sqrt{n/S}$ is a scaling sequence for {$W_1(\mu_n,\mu)$}.
Theorem \ref{thm:GAR_kolmogorov_2} states that $\hat{Z}_W$ approximates the randomness of the scaled {$W_1(\mu_n,\mu)$} with arbitrary accuracy as $n \to \infty$.
The result is valid without utilizing a limit distribution of {$W_1(\mu_n,\mu)$}, similar to Lemma \ref{lem:intermediateGAR}.
The convergence rate in Theorem \ref{thm:GAR_kolmogorov_2} is reasonably fast, because it is a polynomial rate in $n$ although we consider the infinite dimensional parameter $f \in \Xi^{\lip,\gamma}(L,S)$.

To obtain a more convenient formulation, we introduce an assumption on $\mu_n$.
\begin{assumption}[Anti-Concentration of {$W_1(\mu_n,\mu)$}] \label{asmp:cont_cdf}
    For $S = S_n$ as the setting in Theorem \ref{thm:GAR_kolmogorov_2} with any $n$, any $r \in \mathbb{R}$ and $\delta > 0$, there exists a constant $C_{r} > 0$ such that
    \begin{align*}
        \Pr(r \leq \sqrt{n/S} {W_1(\mu_n,\mu)} \leq r + \delta) \leq C_r \delta.
    \end{align*}
\end{assumption}
This assumption requires that the distribution of $\sqrt{n/S} {W_1(\mu_n,\mu)}$ does not degenerate at some specific point.
For example, when the distribution has a bounded density function, the assumption holds.
We numerically validate this assumption and prove that it empirically holds in Section \ref{sec:asmp1}.

{
With this assumption, we can measure the performance of the approximation by $\hat{Z}_W$ in terms of the Kolmogorov distance.
Formally, the Kolmogorov distance between random variables $X,X'$ is defined as $\sup_{t \in \mathbb{R}} | \Pr(X\leq t) - \Pr(X'\leq t) |$.
This distance is suitable for analyzing the hypothesis test, because this definition is compatible with a notion of confidence interval and rejection region.
Using the definition, we obtain the result as follows:
}
\begin{corollary} \label{cor:main}
    Suppose Assumption \ref{asmp:cont_cdf} holds.
    Then, with the settings of Theorem \ref{thm:GAR_kolmogorov_2}, we obtain
    \begin{align*}
        &\sup_{z \in \mathbb{R}} \abs*{ \Pr\left(\sqrt{\frac{n}{S_n}} {W_1(\mu_n,\mu)} \leq z \right) - \Pr\left(\hat{Z}_W \leq z \right)  } \to 0, ~ (n \to \infty).
    \end{align*}
\end{corollary}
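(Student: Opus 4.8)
The plan is to control the Kolmogorov distance
\[
\Delta_n := \sup_{z \in \R} \left| \Pr\left(\sqrt{n/S_n}\, W(\mu_n,\mu) \le z\right) - \Pr\left(\hat Z_W \le z\right)\right|
\]
by coupling the two random variables through Theorem \ref{thm:GAR_kolmogorov_2} and smoothing the resulting threshold shift with the anti-concentration of Assumption \ref{asmp:cont_cdf}. First I would set $\varepsilon_n := c'_W \sigma^2 S_n^{1/6}(\log n)/n^{1/6}$ and let $\mathcal{E}_n$ be the event, of probability tending to $1$, on which $|\sqrt{n/S_n}\,W(\mu_n,\mu) - \hat Z_W| \le \varepsilon_n$. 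On $\mathcal{E}_n$ the event $\{\sqrt{n/S_n}\,W(\mu_n,\mu) \le z\}$ is sandwiched between $\{\hat Z_W \le z - \varepsilon_n\}$ and $\{\hat Z_W \le z + \varepsilon_n\}$, so for every $z$,
\[
\Pr(\hat Z_W \le z - \varepsilon_n) - \Pr(\mathcal{E}_n^c) \le \Pr\!\left(\sqrt{n/S_n}\,W(\mu_n,\mu) \le z\right) \le \Pr(\hat Z_W \le z + \varepsilon_n) + \Pr(\mathcal{E}_n^c).
\]

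Next I would convert the $\pm\varepsilon_n$ shift of the thresholds into a uniform error. Applying Assumption \ref{asmp:cont_cdf} to $W(\mu_n,\mu)$, together with the analogous Gaussian anti-concentration bound for the supremum $\hat Z_W$, gives $\sup_z |\Pr(\hat Z_W \le z \pm \varepsilon_n) - \Pr(\hat Z_W \le z)| \le C \varepsilon_n$, whence $\Delta_n \le C\varepsilon_n + \Pr(\mathcal{E}_n^c)$. Under the scaling $\underline{c}\,n^{2d/(3+2d)}\log^{1/3}n < S_n < \overline{c}\,n/\log^6 n$ both $\varepsilon_n$ and $\Pr(\mathcal{E}_n^c)$ are controlled, so this chain yields an explicit upper bound on $\Delta_n$.

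The genuinely hard part is matching this with the claimed limiting value $1$. The coupling above only pushes $\Delta_n$ downward, so to reach $\Delta_n \to 1$ I would instead need a complementary lower bound: for each large $n$ I must exhibit a threshold $z_n$ at which the two distribution functions are nearly complementary, that is, $\Pr(\sqrt{n/S_n}\,W(\mu_n,\mu) \le z_n)$ close to $1$ while $\Pr(\hat Z_W \le z_n)$ close to $0$ (or vice versa). Constructing such a separating $z_n$ is the main obstacle, and it stands in direct tension with Theorem \ref{thm:GAR_kolmogorov_2}, which forces $\sqrt{n/S_n}\,W(\mu_n,\mu)$ and $\hat Z_W$ to occupy the same location and scale on $\mathcal{E}_n$. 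Resolving this tension — reconciling the stated limit $1$ with a coupling whose natural output is a vanishing Kolmogorov distance — is where the argument must concentrate, and I would expect it to hinge on identifying a regime of $S_n$, or a feature of the supports of $W(\mu_n,\mu)$ and $\hat Z_W$, under which the two laws become asymptotically singular.
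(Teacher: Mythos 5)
Your coupling-plus-anti-concentration argument in the first two paragraphs is exactly the paper's own proof: the paper fixes $z' = 2S^{1/6}\log^2 n/n^{1/6}$, sandwiches the events using the coupling from Theorem \ref{thm:GAR_kolmogorov_2} (split further into the DNN bias and the Gaussian approximation terms), and converts the threshold shift into a uniform $o(1)$ error via Assumption \ref{asmp:cont_cdf}. The ``tension'' you identify in your last paragraph is resolved not by constructing a separating $z_n$ but by recognizing that the stated limit $1$ is a typo for $0$: the paper's proof of Corollary \ref{cor:main} establishes precisely the two one-sided bounds $\Pr(\hat Z_W \le z) \le \Pr(\sqrt{n/S}\,W(\mu_n,\mu) \le z) + o(1)$ and its converse, i.e.\ vanishing Kolmogorov distance, and the analogous bootstrap statement in Theorem \ref{thm:GMB} is written with limit $0$. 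So you were right to distrust the literal statement, and no asymptotic-singularity argument exists or is needed; do not spend effort trying to prove the limit $1$.

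One substantive difference worth flagging: you invoke, in addition to Assumption \ref{asmp:cont_cdf}, a Gaussian anti-concentration bound for $\hat Z_W$ to absorb the $\pm\varepsilon_n$ shift. The paper avoids this by sandwiching in the direction that shifts the threshold of $W(\mu_n,\mu)$ rather than of $\hat Z_W$, so that only Assumption \ref{asmp:cont_cdf} is used. This is not merely cosmetic: the covariance of $G_W$ carries the scaling $S^{-1}$, so the variance of the process degenerates as $S_n \to \infty$, and the standard anti-concentration inequality for suprema of Gaussian processes (whose bound involves $\Ep[\hat Z_W]$ and a $\sqrt{\log(\underline{\sigma}/\varepsilon)}$ factor and requires a variance lower bound) would need an extra argument to yield a constant $C$ uniform in $n$. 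Rearranging the sandwich so that all shifts land on the $W(\mu_n,\mu)$ side, as the paper does, removes this complication entirely.
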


\subsection{Algorithm: Multiplier Bootstrap}

We present an efficient algorithm to calculate the derived random variable $ \hat{Z}_W$.
Although $ \hat{Z}_W$ is tractable, computing a supremum of Gaussian processes is computationally difficult in general, hence the fast algorithm is profitable for practical applications. 

The proposed algorithm employs a \textit{multiplier bootstrap method} which generates a bootstrap Gaussian process with Gaussian random variable as multipliers \citep{chernozhukov2015comparison}.
We generate multiplier Gaussian variables $\{\xi_1,..., \xi_{n}\} \sim N (0,1)$ which are independent of $\mD_n$.
Then, we define a bootstrap process    
\begin{align}
     \tilde{Z}_W := \sup_{f \in \Xi^{\lip,\gamma}(L,S)}\sqrt{\frac{1}{nS_n}}\sum_{i=1}^{n} \xi_i \left( f(X_i) -  \hat{\Ep}_{\mD_n^X}[f(X)] \right).\label{def:btsp}
\end{align}

We summarize the multiplier bootstrap method in Algorithm \ref{alg:GMB}.
$T$ is a hyper-parameter for the iterations.

The convergence of $\tilde{Z}_W$ is provided by the following Theorem.
For further discussion, we introduce Assumption \ref{asmp:cont_cdf} and present our result in terms of the Kolmogorov distance.
\begin{theorem}[Validity of Gaussian Multiplier Bootstrap]\label{thm:GMB}
Suppose Assumption \ref{asmp:cont_cdf} holds.
Then, with the setting of Theorem \ref{thm:GAR_kolmogorov_2} and for each $n$, we obtain
\begin{align}\label{eq:GMB}
    &\sup_{z \in \mathbb{R}}\abs*{\Pr\left(\sqrt{\frac{n}{S_n}} {W_1(\mu_n,\mu)} \leq z\right) - \Pr\left(\tilde{Z}_W \leq z \mid \mD_n^X\right)} \to 0, ~ (n \to \infty).
\end{align}
\end{theorem}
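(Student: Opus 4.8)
The plan is to bound the Kolmogorov distance in \eqref{eq:GMB} by inserting the Gaussian approximator $\hat Z_W$ and applying the triangle inequality. By Corollary \ref{cor:main}, under Assumption \ref{asmp:cont_cdf} and the scaling of Theorem \ref{thm:GAR_kolmogorov_2}, the quantity $\sup_{z\in\R}|\Pr(\sqrt{n/S_n}\,W(\mu_n,\mu)\le z)-\Pr(\hat Z_W\le z)|$ already converges to $0$. Hence it suffices to prove that the conditional bootstrap discrepancy $\sup_{z\in\R}|\Pr(\hat Z_W\le z)-\Pr(\tilde Z_W\le z\mid \mD_n^X)|$ tends to $0$ in probability; since the Kolmogorov distance is bounded by $1$, adding the two bounds through the triangle inequality then gives \eqref{eq:GMB}.

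The key observation is that, conditionally on $\mD_n^X$, the multiplier term $f\mapsto \sqrt{1/(nS_n)}\sum_{i=1}^n\xi_i(f(X_i)-\hat\Ep_{\mD_n^X}[f(X)])$ is a mean-zero Gaussian process indexed by $\Xi^{\lip}(L,S)$ whose covariance is the empirical covariance $\hat\Sigma(f,f'):=S_n^{-1}(\hat\Ep_{\mD_n^X}[ff']-\hat\Ep_{\mD_n^X}[f]\hat\Ep_{\mD_n^X}[f'])$, while $\hat Z_W=\sup_{f}G_W(f)$ is the supremum of the mean-zero Gaussian process with the population covariance $\Sigma(f,f'):=S_n^{-1}\Ep_\mu[f(X)f'(X)]$ (the centering correction being of smaller order). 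Thus $\tilde Z_W$ and $\hat Z_W$ are suprema of two Gaussian processes on the same index set whose covariances differ by $\Delta_n:=\sup_{f,f'\in\Xi^{\lip}(L,S)}|\hat\Sigma(f,f')-\Sigma(f,f')|$, and the bootstrap discrepancy reduces to a Gaussian-to-Gaussian comparison governed by $\Delta_n$.

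I would then control $\Delta_n$ and convert it into a Kolmogorov-distance bound. For the first, since every $f\in\Xi^{\lip}(L,S)$ takes values in $[0,1]$, each $\hat\Ep_{\mD_n^X}[ff']-\Ep_\mu[ff']$ is an average of $n$ bounded i.i.d.\ summands, so a Bernstein bound combined with a union bound over a $\delta$-net of $\Xi^{\lip}(L,S)$ --- whose log-covering number $\log\mathcal{N}_n$ is governed by the same VC-type entropy estimate for the DNN class that underlies Lemma \ref{lem:intermediateGAR} --- yields $\Delta_n=O_P(S_n^{-1}\sqrt{(\log\mathcal{N}_n)/n})$, with $\log\mathcal{N}_n$ only polylogarithmic in $n$ when $L=O(\log^2 n)$. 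For the second, I invoke the Gaussian comparison inequality of \cite{chernozhukov2015comparison} together with the Gaussian anti-concentration inequality for suprema of Gaussian processes to obtain, conditionally on the data, $\sup_{z}|\Pr(\hat Z_W\le z)-\Pr(\tilde Z_W\le z\mid\mD_n^X)|\lesssim \Delta_n^{1/3}\{1\vee\log(1/\Delta_n)\vee\log\mathcal{N}_n\}^{2/3}$, which converges to $0$ in probability under the window $\underline c\,n^{2d/(3+2d)}\log^{1/3}n<S_n<\overline c\,n/\log^6 n$.

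The main obstacle is showing that this comparison bound genuinely vanishes under the prescribed scaling. The tension is that the lower bound on $S_n$ is needed to make the DNN approximation error of Lemma \ref{lem:NN-approx} negligible, whereas the upper bound $S_n=o(n/\log^6 n)$ is what forces both the Gaussian-approximation remainder of Lemma \ref{lem:intermediateGAR} and the covariance-estimation error $\Delta_n$ to shrink quickly enough that, after the cube-root loss inherent in the Gaussian comparison inequality and the polylogarithmic entropy factors of the deep-network class, the overall estimate still tends to $0$. Making the entropy accounting of $\Xi^{\lip}(L,S)$ explicit and verifying that these polylog factors are absorbed by the polynomial-in-$n$ gains is the technically delicate step; once it is in place, the triangle inequality with Corollary \ref{cor:main} closes the proof.
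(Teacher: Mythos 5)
Your overall architecture (insert $\hat Z_W$, use Corollary \ref{cor:main} for the first term, then control the bootstrap discrepancy) matches the paper's strategy, but your treatment of the discrepancy $\sup_{z}|\Pr(\hat Z_W\le z)-\Pr(\tilde Z_W\le z\mid \mD_n^X)|$ has a genuine, fatal gap: the entropy accounting. By Lemma \ref{lem:entropy}, $\log N(\varepsilon,\Xi^{\lip}(L,S_n),\|\cdot\|_{L_2(Q)})\le (S_n+1)\log\left(4(L+1)V^2/\varepsilon\right)$, so $\log\mathcal{N}_n \asymp S_n\cdot\mathrm{polylog}(n)$; the depth choice $L=O(\log^2 n)$ only tames the logarithmic factor, not the $(S_n+1)$ prefactor. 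Since Theorem \ref{thm:GAR_kolmogorov_2} requires $S_n>\underline{c}\,n^{2d/(3+2d)}\log^{1/3}n\ge n^{2/5}$, the entropy is \emph{polynomial} in $n$, not polylogarithmic as you assert. Feeding the correct entropy into your own estimates (and noting that the Kolmogorov distance is scale invariant, so the comparison inequality must be applied at unit variance scale, where the covariance discrepancy is of order $\sqrt{S_n\,\mathrm{polylog}(n)/n}$), the cube-root comparison bound becomes $\Delta_n^{1/3}\{\log\mathcal{N}_n\}^{2/3}\asymp S_n^{5/6}n^{-1/6}\,\mathrm{polylog}(n)$, which tends to zero only when $S_n=o(n^{1/5})$ up to logs --- incompatible with $S_n\ge n^{2d/(3+2d)}$ for every $d\ge 1$. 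So the step you flag as ``technically delicate'' is not delicate but impossible along this route.

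The paper avoids this trap by never comparing distributions via covariances. Its proof invokes the multiplier-bootstrap \emph{coupling} theorem (Theorem 2.2 of \cite{chernozhukov2016empirical}): conditionally on the data, $\tilde Z_W$ can be coupled with $\hat Z_W$ so that their difference is bounded by $c_3(\eta+\delta^{(2)}_{S,n})$ divided by the process scaling, where $\delta^{(2)}_{S,n}=\frac{b_S K_{S,n}}{\gamma n^{1/2}}+\frac{(b_S\sigma K_{S,n}^{3/2})^{1/2}}{\gamma n^{1/4}}$ and $K_{S,n}=O(S_n\log n)$. Two structural advantages make this work where your argument fails. First, a coupling bound, unlike a Kolmogorov distance, shrinks under the $1/\sqrt{S_n}$ rescaling of the process, so after that division the error vanishes for all $S_n=o(n/\log^6 n)$; a distributional comparison can never exploit this rescaling. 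Second, the coupling is converted into a Kolmogorov bound using the anti-concentration of the Wasserstein statistic postulated in Assumption \ref{asmp:cont_cdf} (a bounded constant $C_r$), exactly as in the proof of Corollary \ref{cor:main}, whereas your proposal relies on Gaussian anti-concentration for the supremum of the Gaussian process, whose constant grows like $\Ep[\sup_f G_P(f)]\lesssim\sqrt{K_{S,n}}$, again polynomial in $n$. Repairing your proof would require making both substitutions --- a Gaussian-to-Gaussian coupling with error of order $\sqrt{\Delta_n K_{S,n}}$ in place of the $\Delta_n^{1/3}$ comparison, and Assumption \ref{asmp:cont_cdf} in place of Gaussian anti-concentration --- at which point you have reconstructed the paper's argument.
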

Theorem \ref{thm:GMB} proves that the distribution of $\tilde{Z}_W$ with fixed $\mD_n^X$ can approximate the distribution of {$W_1(\mu_n,\mu)$} by random $\mD_n^X$.
We obtain the result by a combination of all the approximators developed in Section \ref{sec:approx}.
Table \ref{table:list} presents an overview of our strategy for approximating the distribution of {$W_1(\mu_n,\mu)$}.

From a computational aspect, Algorithm \ref{alg:GMB} requires a computational time of $O(n)$.
It is sufficiently faster than the empirical bootstrap and sub-sampling bootstrap methods, which require $O(n^2)$ computational time; they require $O(n)$ subsets for $O(n)$ sub-samples to achieve the same accuracy as that of our method.

\begin{algorithm}[H]

\caption{Gaussian Multiplier Bootstrap \label{alg:GMB}}
   
\begin{algorithmic}
    \REQUIRE $T \in \mathbb{N}, (L,S) \in \mathbb{N}^2,\mD_n^X$  
    
    \FOR{$t=1$ to $T$}
    \STATE Generate i.i.d. random variables $\{\xi_1^{(t)},..., \xi_{n}^{(t)}\} \sim N(0,1)$, independently of $\mD_n$.
    \STATE Compute a supremum of the following multiplier bootstrap process as
    \begin{align*}
        \tilde{Z}_{W}^{(t)} \leftarrow  \tilde{Z}_W \mbox{~as~}\eqref{def:btsp}.
    \end{align*}
    \ENDFOR
    \ENSURE $\{\tilde{Z}_W^{(1)},...,\tilde{Z}_W^{(T)}\}$ as a distribution of $\tilde{Z}_W$.
\end{algorithmic}
\end{algorithm}

{
\begin{remark}[Architecture Selection]
Our theory guarantees the existence of an appropriate neural network architecture, but does not specify it. 
Instead, we conduct compprehensive experiments in Section \ref{sec:select_architecture} in Appendix to provide suggestions for architectural choices.
From the result, we recommend increasing the dimension of each layer (with) and limiting the number of hidden layers. 
Also, a wide variety of network types are allowed, such as residual networks \cite{he2016deep}.
\end{remark}

}

    \section{Applications to Hypothesis Test}

    We develop one- and two-sample tests by applying the distributional approximation.
    In the following, we set $\alpha \in (0,1)$ as an arbitrary significance level.

    \textbf{One-sample test}:
    We test the probability measure $\mu$, which generates $\mD_n^X$ and is identical to a pre-specified measure $\mu_0$.
    We consider the null/alternative hypotheses as follows:
    \begin{align*}
        H_0: \mu = \mu_0, \qquad H_1: \mu \neq \mu_0.
    \end{align*}
    To test the hypothesis, we use {the empirical $1$-Wasserstein distance} $\sqrt{n/S}{W_1(\mu_n, \mu_0)}$ as the test statistic.
    The asymptotic distribution of the test statistics can be approximated by the Gaussian multiplier bootstrap by Algorithm \ref{alg:GMB}. 
    We present the procedure of the one-sample test as follows:
    \begin{enumerate}
        \item We calculate the distribution of $\tilde{Z}_W$ and its $(1-\alpha)$-quantile $\tilde{q}(1-\alpha)$.
        \item If $\sqrt{n/S}{W_1(\mu_n,\mu_0)} \geq \tilde{q}(1-\alpha)$ holds, reject $H_0$. Otherwise, accept $H_0$.
    \end{enumerate}
    The validity of the test is presented in the following theorem, which shows that the Type I error converges to the specified $\alpha$.
    \begin{theorem}\label{thm:one_test}
    Suppose Assumption \ref{asmp:cont_cdf} holds, $S_n$ satisfies the setting of Theorem \ref{thm:GAR_kolmogorov_2}.
    Under the null hypothesis $H_0:\mu=\mu_0$, we have the Type I error as
    \begin{align*}
    &\Pr\left(\sqrt{\frac{n}{S_n}} {W_1(\mu_n,\mu_0)} >  \tilde{q}(1-\alpha) \right) = \alpha + o(1), ~~(n \to \infty).
    \end{align*}
    \end{theorem}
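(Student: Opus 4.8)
The plan is to reduce the Type I error statement to the bootstrap consistency of Theorem \ref{thm:GMB}, using the anti-concentration Assumption \ref{asmp:cont_cdf} to convert distributional closeness into quantile closeness. Write $T_n := \sqrt{n/S_n}\,W(\mu_n,\mu_0)$ for the test statistic and let $F_n(z) := \Pr(T_n \le z)$ be its (deterministic) distribution function; under $H_0$ we have $\mu=\mu_0$, so $F_n$ is exactly the left-hand distribution in \eqref{eq:GMB}. Let $\hat{G}_n(z) := \Pr(\tilde{Z}_W \le z \mid \mD_n^X)$ be the conditional bootstrap distribution, so that by construction $\tilde{q}(1-\alpha) = \inf\{z : \hat{G}_n(z) \ge 1-\alpha\}$. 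I expect the genuine difficulty to be that $\tilde{q}(1-\alpha)$ is a \emph{random} quantile built from the same data $\mD_n^X$ that generates $T_n$, so one cannot simply substitute it into $F_n$; this dependence is the crux of the argument.

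First I would record two consequences of the hypotheses. Letting $\delta \downarrow 0$ in Assumption \ref{asmp:cont_cdf} shows that $W(\mu_n,\mu_0)$ has no atoms, so $F_n$ is continuous; hence for the deterministic quantile $q_n(p) := \inf\{z : F_n(z) \ge p\}$ one has $F_n(q_n(p)) = p$ exactly, for every level $p \in (0,1)$. From Theorem \ref{thm:GMB} the Kolmogorov distance $\Delta_n := \sup_{z \in \R} |F_n(z) - \hat{G}_n(z)|$ satisfies $\Delta_n = o_P(1)$, the randomness entering through $\hat{G}_n$.

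Next I would sandwich the random quantile between deterministic ones. Fix $\epsilon>0$ and work on the event $\{\Delta_n \le \epsilon\}$, whose probability tends to $1$. On this event $\hat{G}_n(z)\ge 1-\alpha$ forces $F_n(z)\ge 1-\alpha-\epsilon$, whence $\tilde{q}(1-\alpha)\ge q_n(1-\alpha-\epsilon)$; conversely, evaluating at $z=q_n(1-\alpha+\epsilon)$ gives $\hat{G}_n(z)\ge F_n(z)-\epsilon = 1-\alpha$, so $\tilde{q}(1-\alpha)\le q_n(1-\alpha+\epsilon)$. These bounds yield, again on $\{\Delta_n \le \epsilon\}$, the inclusions
\[
\{T_n > q_n(1-\alpha+\epsilon)\} \subseteq \{T_n > \tilde{q}(1-\alpha)\} \subseteq \{T_n > q_n(1-\alpha-\epsilon)\},
\]
so that, absorbing the $o(1)$ probability of the complementary event,
\[
1 - F_n(q_n(1-\alpha+\epsilon)) - o(1) \le \Pr\big(T_n > \tilde{q}(1-\alpha)\big) \le 1 - F_n(q_n(1-\alpha-\epsilon)) + o(1).
\]
By continuity of $F_n$ the two deterministic endpoints equal $\alpha-\epsilon$ and $\alpha+\epsilon$, giving $\alpha-\epsilon-o(1) \le \Pr(T_n > \tilde{q}(1-\alpha)) \le \alpha+\epsilon+o(1)$. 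Letting $n\to\infty$ and then $\epsilon\downarrow 0$ produces $\Pr(T_n > \tilde{q}(1-\alpha)) = \alpha + o(1)$, which is the claim.

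The step I expect to be most delicate is this sandwiching: the tempting shortcut $\Pr(T_n \le \tilde{q}(1-\alpha)) = \hat{G}_n(\tilde{q}(1-\alpha)) = 1-\alpha$ is illegitimate, since the left-hand probability entangles the data randomness in $T_n$ with that in the random threshold $\tilde{q}(1-\alpha)$. The device above circumvents this by comparing against \emph{deterministic} quantiles $q_n(1-\alpha\pm\epsilon)$ and exploiting the uniformity in $z$ of the Kolmogorov bound from Theorem \ref{thm:GMB}; the role of Assumption \ref{asmp:cont_cdf} is precisely to guarantee that the resulting deterministic probabilities do not jump, so that the $\epsilon$-gaps close cleanly in the limit.
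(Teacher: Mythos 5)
Your proof is correct and follows essentially the same route as the paper: both reduce the Type I error claim to the Kolmogorov-distance bootstrap consistency of Theorem \ref{thm:GMB} combined with the definition of $\tilde{q}(1-\alpha)$ as the conditional bootstrap quantile. The paper's own proof is a one-liner, $\Pr\bigl(\sqrt{n/S_n}\,W(\mu_n,\mu_0) > \tilde{q}(1-\alpha)\bigr) = \Pr\bigl(\tilde{Z}_W > \tilde{q}(1-\alpha) \mid \mD_n^X\bigr) + o(1) = \alpha + o(1)$, which silently substitutes the data-dependent quantile into the fixed-$z$ Kolmogorov bound; your sandwich between the deterministic quantiles $q_n(1-\alpha\pm\epsilon)$, with Assumption \ref{asmp:cont_cdf} supplying continuity of $F_n$, is exactly the justification that step requires, so your write-up is if anything more complete than the paper's.
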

    
    Additionally, we can construct a confidence interval that contains a true Wasserstein distance with an arbitrary significance level.
    Namely, an interval
        $\left[ \frac{\sqrt{S} \tilde{q}(\alpha/2)}{\sqrt{n}}, \frac{\sqrt{S} \tilde{q}(1-\alpha/2)}{\sqrt{n}}\right]$
    contains {$W_1(\mu,\mu_0)$} with probability $(1-\alpha)$ asymptotically.
   
    \textbf{Two-sample test}:
    We consider a two-sample test to investigate the discrepancy between two probability measures from two sets of observations.
    In addition to $\mD_n^X \sim \mu$, suppose that we observe a set of independent $m$ observations $\mD_m^Y := \{Y_1,...,Y_m\} $ from another distribution $\nu$.
    We define $\nu_m := m^{-1} \sum_{j=1}^m \delta_{Y_j}$ as its empirical distribution.
    Let $\tilde{Z}_{W}^{(X)}$ be an output of Algorithm \ref{alg:GMB} from $\mD_n^X$, and $\tilde{Z}_{W}^{(Y)}$ be an output from $\mD_n^Y$.
    To calculate $\tilde{Z}_{W}^{(Y)}$, we use a DNN with $L$ layers and $S_m$ parameters.
    We define $\rho_{n,m} = (nm)/(m+n)$ and $\lambda_{n,m} := m/(n+m)$.

    In the two-sample test, we consider the following null/alternative hypotheses:
    \[
    H_0: \mu = \nu, \qquad H_1: \mu \neq \nu.
    \]
    To investigates the hypotheses, we employ $\sqrt{\rho_{n,m}/(S_n \wedge S_m)}{W_1(\mu_n, \nu_m)}$  as a test statistic.
    We define $\breve{q}_X(1-\alpha)$ and $\breve{q}_Y(1-\alpha)$ as $(1-\alpha)$-quantiles of the distributions $\sqrt{\lambda_{n,m}}\tilde{Z}_{W}^{(X)}$ and $\sqrt{1-\lambda_{n,m}}\tilde{Z}_{W}^{(Y)}$, and also define $\breve{q}(1-\alpha):=\inf_{r \in (0,1)}\breve{q}_X(1-r\alpha) +  \breve{q}_Y(1-(1-r)\alpha)$.
    The procedure of the two-sample test is as follows:
    \begin{enumerate}
        \item We calculate $\breve{q}(1-\alpha)$ from $\mD_n^X$ and $\mD_m^Y$.
        \item If $\sqrt{\rho_{n,m}/(S_n \wedge S_m)}{W_1(\mu_n,\nu_m)} \geq \breve{q}(1-\alpha)$ holds, reject $H_0$. Otherwise, accept $H_0$.
    \end{enumerate}
    The following theorem presents the validity of the test by proving the convergence of the Type I error.
    \begin{theorem}\label{thm:two_test}
    Suppose $n/m \to \Bar{\lambda} \in (0,1)$ as $n,m \to \infty$.
    Also, suppose Assumption \ref{asmp:cont_cdf} holds, and both $S_n$ and $S_m$ satisfy the setting of Theorem \ref{thm:GAR_kolmogorov_2}.
    Then, under the null hypothesis $H_0:\mu=\nu$, we have the Type I error as
    \begin{align}\label{eq:null_one_append}
        &\Pr \left(\sqrt{\frac{\rho_{n,m}}{S_n \wedge S_m}} {W_1(\mu_n,\nu_n)} > \breve{q}(1-\alpha) \right) \leq \alpha +o(1), ~~(n,m \to \infty).
    \end{align}
    \end{theorem}
    We note that the proposed two-sample test is conservative unlike the one-sample test, due to the triangle inequality in measuring the distribution of {$W_1(\mu_n,\nu_n)$}.
    Furthermore, similar to the one-sample test, we can develop an asymptotically valid confidence interval.

\section{Experiments}\label{sec:pre_exp}
    We conduct experiments to validate the performance of the proposed tests with the Wasserstein distance. 
    
    \subsection{Setting for the proposed test}
    For the proposed test, we utilize a fully-connected DNN with the ReLU activation and $3$ layers, where each layer contains $100$ or $200$ nodes.
    To calculate the supremum in \eqref{def:btsp}, 
        we employ Vanilla SGD or ADAM \citep{adam_opt} optimizer.
    Additionally, we use a \textit{spectral normalization} \citep{miyato2018spectral} to restrict the Lipschitz constants of the functions to $1$.

\subsection{Validation of Distributional Approximation} \label{sec:exp_validation} 
    
\begin{figure}[htbp]
    \centering
    \begin{minipage}{28mm}
        \centering
        \includegraphics[width=28mm]{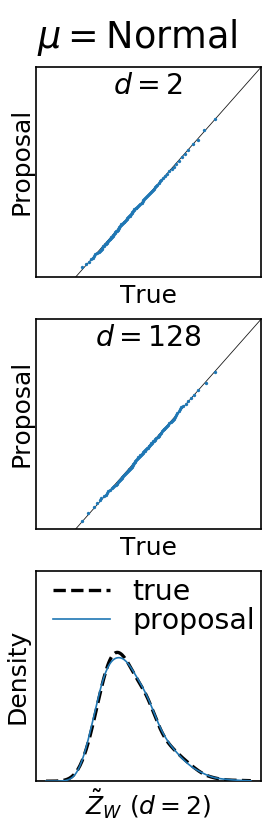}
    \end{minipage}
    \begin{minipage}{28mm}
        \centering
        \includegraphics[width=28mm]{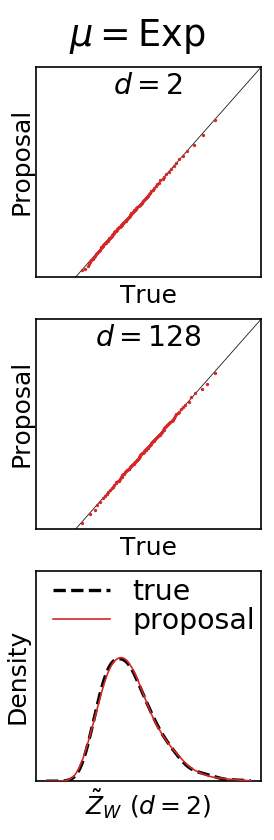}
    \end{minipage}
    \begin{minipage}{28mm}
        \centering
        \includegraphics[width=28mm]{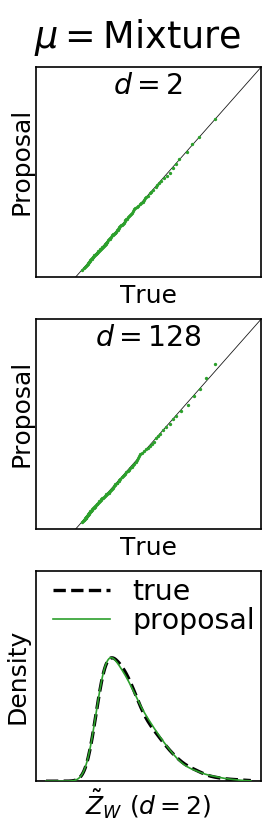}
    \end{minipage}
    \caption{Upper and middle rows show the Q-Q plots for the true distribution (vertical) and a distribution of the proposed $\tilde{Z}_W$ (horizontal), using $2000$ $d$-dimensional samples from the normal, exponential, and mixture of Gaussian distributions. The lower row indicates the densities of the true distribution and the distribution of the proposal with $2$-dimensional cases from corresponding distributions in their columns. }
    \label{img:QQ}
\end{figure}

    We validate the performance of our proposed method in Algorithm \ref{alg:GMB} which approximates the true distribution of {$W_1(\mu_n,\mu)$}.
    To generate a true distribution, we set $1,000$ and generate $\mD_n^X$ and then calculate {$W_1(\mu_n,\mu)$}.
    Then, we repeat this procedure for $2,000$ times, hence we use $1,000\times 2,000$ observations in total.
    To validate the proposed method, we draw $n = 1,000$ samples from $\mu$ and calculate $\tilde{Z}_W$. 
    
    As configurations of $\mu$, we generate $d$-dimensional random vectors ($d\in \{2,128\}$) whose elements independently follow 
        (i) a normal $\mathcal{N}(0,1)$,
        (ii) an exponential $\mathcal{E}(1)$,
        and
        (iii) a mixture of Gaussian $\frac{1}{2}\mathcal{N}(-4,1)+\frac{1}{2}\mathcal{N}(4,1)$.
    For the proposed distribution $\tilde{Z}_{W}$, 
        we set $T=2,000$.

    We present the Q-Q plots and histograms in Figure~\ref{img:QQ}.
    The results show that our proposed method can approximate the distribution of the {empirical $1$-Wasserstein distance} well in every configuration of $\mu$.

    \subsection{Comparison with Other Hypothesis Tests}

\begin{figure}[htbp]
    \centering
\includegraphics[width=0.98\hsize]{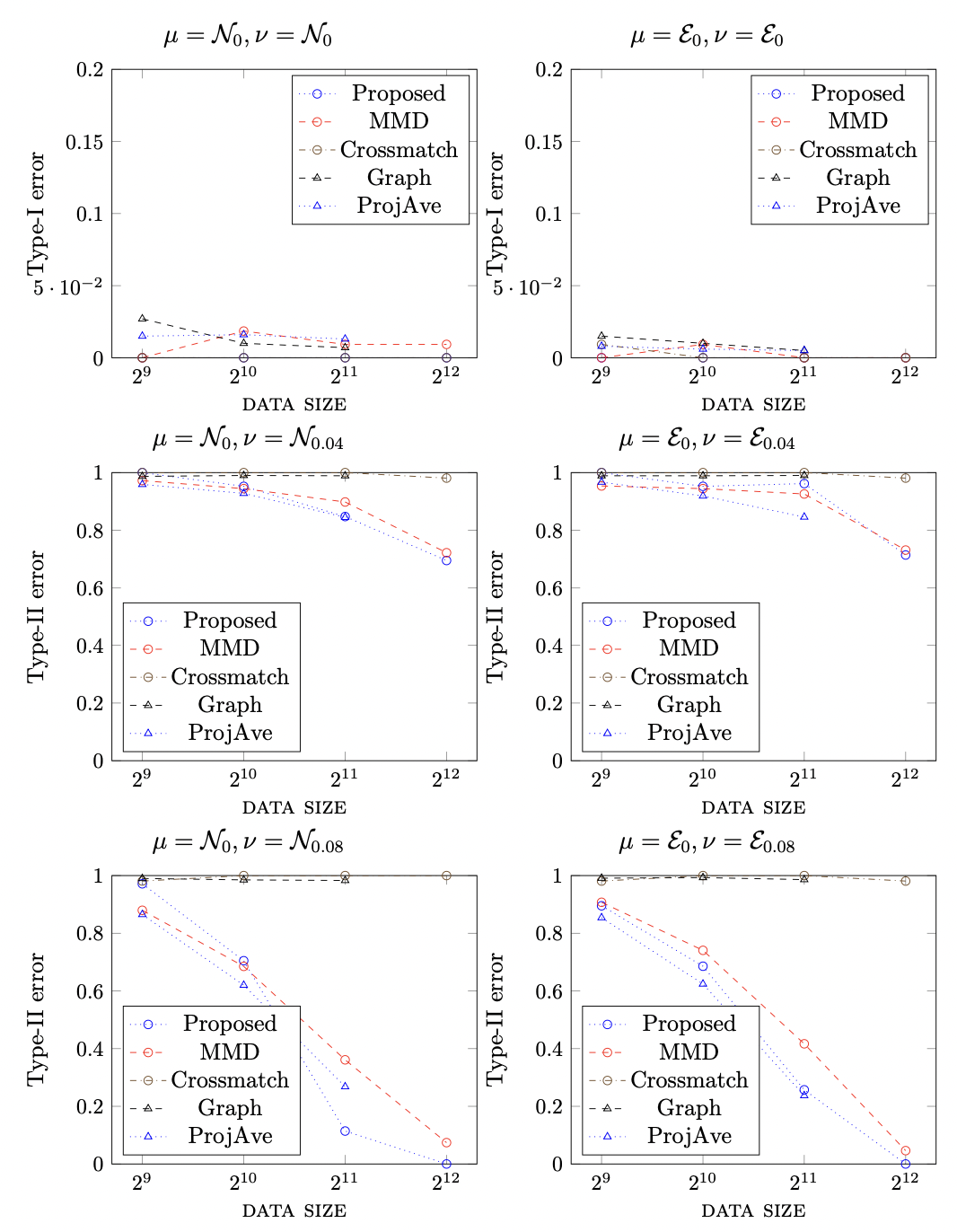}
\caption{Type I errors (upper row) and type II errors (middle and lower rows) of the proposed method and the baselines. We set $\alpha=0.01$, and define the Gaussian distribution $\mathcal{N}_{\upsilon}$ (left column) and  exponential distributions $\mathcal{E}_{\lambda}$ (right column).  \label{fig:compare}}
\end{figure}

    We compare the performance of the proposed test with other tests by reporting the type I errors, type II errors, and computational times.
    As the methods for comparison, we consider the maximum mean discrepancy (MMD) test \citep{gretton2012kernel}, the crossmatch test \citep{rosenbaum2005exact}, the graph-based two-sample test \cite{chen2017new}, and the projection-averaging test \cite{kim2020robust}.
    We set $\alpha = 0.01$ and $n \in \{512, 1024, 2048, 4096\}$, and then generate $n$ samples from two distributions with different parameters.
    First, we consider two-dimensional Gaussian distributions, namely, we set 
    $\mu = \mathcal{N}_{\upsilon}=\mN(\upsilon\mathbf{1}, I)$ with $\upsilon \in \{0, 0.08\}$, and $\nu = \mathcal{N}_{0}$.
    $\mathbf{1}$ and $I$ represent an all one vector and an identity matrix, respectively.
    Second, we consider two products of exponential distributions, namely,  $\mu=\mathcal{E}_{\lambda}= \Exp(1+\lambda)\otimes \Exp(1+\lambda)$, with $ \lambda \in \{0, 0.08\}$, and $\nu=\mathcal{E}_0$.

    We plot the errors in Table~\ref{fig:compare}.
    For the type II errors, the proposed Wasserstein test works slightly better than the other methods as $n$ increases.
    {The results of the projection-averaging and the graph-based two-sample test with $n=4096$ are not shown, because the amount of used memory was too large to be calculated in our computing environment \footnote{We used a server for computation provided by \textit{Amazon Web Service} with $6$ cores and $64$ GB of memory.}. These methods require multiple $n \times n$ distance matrices and generate new graphs of matrices internally, making them difficult to compute when the number of data is large.
    The result in Table \ref{fig:compare} shows that the proposed test works stably with large sample sizes, and also can distinguish the two close distributions because the Wasserstein distance induces a weak topology.
    }
    
    We also show computational time of a part of the methods in Figure \ref{fig:time}.
    {Our method is not the fastest, but it works within a reasonable amount of time, which is almost constant, even as the number of data increases. 
    Other methods increase the computation time with respect to the number of data. 
    This is because the proposed test requires a computational time of $O(n)$, while some other methods, such as the MMD test, require $O(n^2)$ computational time.
    Also, the projection averaging and the graph-based test become impossible to compute when the number of data is large ($n=4096$), for the reasons mentioned earlier. 
    These results suggest that our proposed method is a method that works stably even with a large number of data.
    }
    
\begin{table}[htbp]
    \centering
    \begin{tabular}{c cccc}
        \toprule
        data size   &512 & 1024 & 2048 & 4096 \\ 
        \midrule
        Proposed        & 149.6        & 151.5       & 151.9   &   157.2    \\
        MMD        & 15.8    & 87.0       & 369.8      & 1428.4   \\
        Crossmatch            & 2.7    & 4.9  & 14.2     & 65.2    \\
        Graph    &   0.3   & 0.9    &  3.0  &N/A       \\
        ProjAve    &  4.4  & 21.4  & 89.4 & N/A      \\
        \bottomrule
    \end{tabular}
    \caption{{Computational time (seconds) against the data size by the proposed method and the baselines. The values are mean of $50$ repetitions.\label{fig:time}}}
\end{table}

    \subsection{Comparison with Divergences under Singular Measures} \label{sec:exp_singular}
    We demonstrate that the proposed test exhibits better performance that other divergence-based tests, when $\mu$ and $\nu$ are singular (i.e. there exists $\Omega \subset \mX$ such that $\mu(\Omega) = 0$ and $\nu(\Omega) \neq 0$ or vise versa).
    It is well known that common divergences, such as Kullback-Leibler divergence, cannot work well with singular measures, although singular measures frequently appear in tasks involving generative adversarial networks (GANs).
    We compare the proposed Wasserstein-based test with the Kullback-Leibler divergence (KL-div), Jensen–Shannon divergence (JS-div), and total-variation (TV).
    
    \begin{figure}[H]
    \centering
        \begin{minipage}{47mm}
            \includegraphics[width=1.0\hsize]{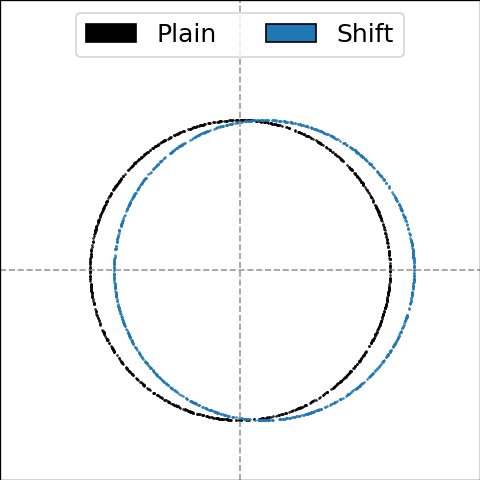}
        \end{minipage}
        \begin{minipage}{47mm}
            \includegraphics[width=1.0\hsize]{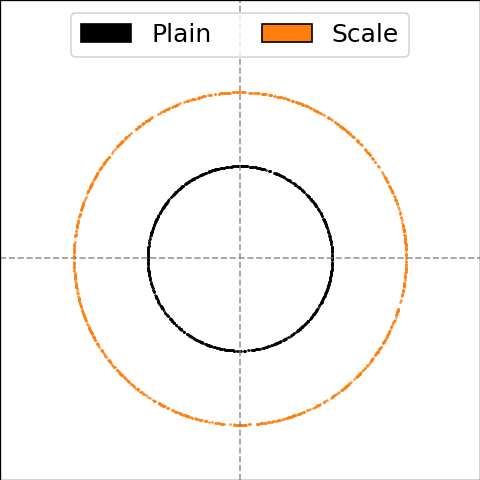}
        \end{minipage}
        \caption{Support of the measure in the $2$-dimensional space.
        The black circles is for the measure \textit{Plain}, the blue circle is \textit{Shift}, and the orange circle is \textit{Scale}.}
        \label{fig:manifolds}
\end{figure}

    We consider three configurations for $\mu$ and $\nu$, namely, \textit{Plain}, \textit{Shift}, and \textit{Scale}, as follow.
    Let \textit{Plain} be a uniform measure on a set $\{(x_1,x_2)\mid x_1^2+x_2^2=\frac{1}{4}\}$, \textit{Shift} be a uniform measure on a set $\{(x_1,x_2): (x_1-0.08)^2+x_2^2=\frac{1}{4}\}$, and \textit{Scale} be a uniform measure on a set $\{(x_1,x_2) \mid x_1^2+x_2^2=\frac{1}{4}1.8^2\}$.
    We note that the measures are singular to each other, since their supports have few intersections, as shown in Figure~\ref{fig:manifolds}.
    We generate $n=1024$ samples.

    For comparison, we conduct a permutation test with the divergences.
    To calculate the divergences, we use a density function of $\mu$ and $\nu$ estimated by a kernel method with the Gaussian kernel.
    Its bandwidth is selected by cross-validation.
    We conduct $20$ trials with a significance level of $\alpha=0.1$.

    We present the results in Table~\ref{table:topology}.
    The KL- and JS-divergences do not capture the difference between the measures, and the other divergences fail in some cases.
    In contrast, the proposed test with the Wasserstein distance works well although the supports do not overlap completely.
    
    \begin{table}[htbp]
    \centering
    \begin{tabular}{c ccc}
        \toprule
        $\mu/\nu$   & \textit{Plain}$/$\textit{Plain} & \textit{Plain}$/$\textit{Shift} & \textit{Plain}$/$\textit{Scale} \\ 
        \midrule
        KL-div        & 5        & 100       & 100         \\
        JS-div        & 15    & 100       & 100          \\
        TV            & 10    & \bf{0}  & \bf{0}         \\
        \midrule
        Proposed    & \bf{0}    & \bf{0}  & \bf{0}       \\
        \bottomrule
    \end{tabular}
    \caption{Error-rate over $20$ trials of the one-sample test with singular probability measure. 
    The bold letters indicate good scores.}
    \label{table:topology}
\end{table}

    \subsection{Real Data Analysis}

    \textbf{Molecular Data:}
    We evaluate the proposed test and the other tests using bioinformatic datasets, named Leukemia and p53, from the Molecular Signatures Database (MSigDB)
        \footnote{\url{http://software.broadinstitute.org/gsea/index.jsp}}\citep{liberzon2011molecular}.
    Each of the datasets contains from $8$ to $33$ samples and around $10,000$ features.
    The datasets contain two different groups such as male and female.
    We conduct the two sample-test with the following two settings.
    First, we consider $\mu \neq \nu$ by setting $\mu$ as a distribution for males and $\nu$ for females.
    For the $\mu=\nu$ settings, we randomly split each of the groups, labeled as Leukemia-ALL, Leukemia-AML, p53-MUT, and p53-WT, into two groups, respectively.

\begin{table*}[htbp]
    \centering
    \begin{tabular}{c cc ccc}
        \toprule
            \multicolumn{1}{l}{} & 
            \multicolumn{2}{c}{Test with Different Groups} & 
            \multicolumn{3}{c}{Test with  Same Groups} \\ 
            \cmidrule(r){2-3} \cmidrule(lr){4-6} 
                & Leukemia    & p53    &  Leukemia-AML    & p53-MUT   & p53-WT    \\ 
\midrule
Sample sizes    & 24-24        & 33-17    
                & 12-12     & 17-16        & 8-9  \\
Feature sizes   & 10056        & 10095    
                & 10056     & 10095        & 10095  \\
\midrule
KL-div             & \bf{Reject}    & \bf{Reject}    
                & Reject        & \bf{Accept}   & Reject \\
JS-div             & Accept        & Accept    
                & \bf{Accept}    & \bf{Accept}   & \bf{Accept} \\
MMD             & \bf{Reject}    & Accept    
                & \bf{Accept}    & \bf{Accept}    & Reject \\
\midrule
Proposed        & \bf{Reject}    & Accept    
                & \bf{Accept}    & \bf{Accept}   & \bf{Accept} \\
        \bottomrule
    \end{tabular}
    \caption{Results of two sample tests using the  proposed method and the other methods on MSigDB. Bold font indicates the correct decision. \label{tab:bio}}
    \vspace{-3mm}
\end{table*}

    Table~\ref{tab:bio} presents the result.
    The proposed test specifies the differences correctly with the same group datasets.
    In contrast, other methods fail to find the difference with the Leukem or p53-WT datasets.

    \textbf{Handwritten Letter Data (MNIST):}
    To show the interpretability of the proposed test, we conduct a two-sample test for distinguishing between the hand-written letter images from the Modified National Institute of Standards and Technology database (MNIST) dataset \cite{lecun1998gradient}.
    The dataset contains images with $d=576$ pixels for a hand-written number from $0,1,...,9$.
    We set $n=200$ and $n=m$, and then set $\mu_n$ and $\nu_m$ as sampled images of a letter $1$, $3$, or $5$ from the dataset.
    Then, our proposed test investigates whether $\mu_n$ and $\nu_m$ represent the same number.
    We repeat the sampling and test fo $30$ times.

    Figure~\ref{fig:rejection_region} plots pairs of a critical value and a test statistics for each of the repetitions.
    The results show the relative difficulty in distinguishing between the images of $3$ and $5$, rather than $1$ and $5$.
    This is consistent with the intuition from the hand-written images, and the Wasserstein distance succeeds in capturing the intuition.

\begin{figure}[H]
    \centering
    \begin{minipage}{1.0\hsize}
        \centering
        \includegraphics[width=0.6\hsize]{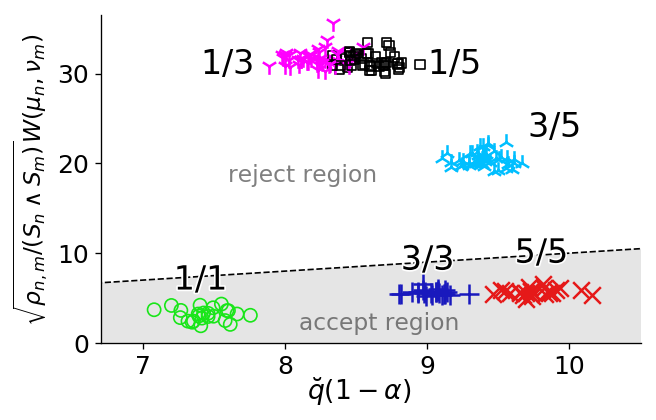}
        \captionsetup{width=1.0\textwidth}
        \caption{Plots of the test statistics and the critical value for each repetition.
        }
        \label{fig:rejection_region}
    \end{minipage}
\end{figure}

    \section{Conclusion}
    We developed a hypothesis test with {the $1$-Wasserstein distance}.
    We utilized the non-asymptotic Gaussian approximation and developed a valid hypothesis test, then solve the problem of the unavailable limit distribution of {the empirical $1$-Wasserstein distance}. 
    For practical applications, we developed an approximation using DNNs and a multiplier bootstrap method.
    Furthermore, we validated our method through experiments, and demonstrated that it performs better than the other tests owing to the weak topological property of the Wasserstein distance.

\appendix

\section{Supportive results}\label{app:support}

We provide additional notations.
For a function $f:\mX \to \mathbb{R}$ and $q \in (0,\infty]$, $\|f\|_{L^p} := (\int_{\mX} f^p(x) dx)^{1/p}$ be an $L^p$-norm.
For a measure $Q$ on $\mX$, $\|f\|_{L^p(Q)} := (\int_{\mX} f^p(x) dQ(x))^{1/p}$ is an $L^p$-norm associated with $Q$.
If $f$ has a different domain $\Omega$, we write $\|f\|_{L^p(\Omega)} := (\int_{\Omega} f^p(x) dx)^{1/p}$.
For a set $\Omega$ associated with a norm $\|\cdot\|$, we define $N(\varepsilon, \Omega, \|\cdot\|) := \min\{N \mid \{\omega_j\}_{j=1}^N \subset \Omega, \cup_{j=1}^N \{\omega \mid \|\omega_j - \omega\|\leq \varepsilon \} \supset \Omega\}$ as a covering number of $\Omega$ with $\varepsilon > 0$.

In this section, we present supportive theorems and lemmas.
First, we provide the following theorem, which is an adapted version of the result for the non-asymptotic approximation.

\begin{theorem}[{Corollary 2.2 in \cite{chernozhukov2014gaussian}, adapted to our setting}] \label{thm:CCK}
    Let $Z_\mF$ be a supremum of an empirical process as \eqref{def:emp_proc}.
    Also, let $\mF$ be a functional class wich an envelope function $F$ such that $\sup_Q N(\varepsilon\|F\|_{L^2}, \mF, \|\cdot\|_{L^2(Q)}) \leq (A/\varepsilon)^v$ with existing constants $A,v > 0$ for all $\varepsilon \in (0,1)$.
    Suppose for some $b > 0$ and $q \in [4,\infty]$, $\sup_{f \in \mF}\Ep_{\mu}[f(X)^k] \leq \sigma^2b^{k-2}$ for $k=2,3,4$ and $\Ep_\mu[|F(X)|^q]^{1/q} \leq b$ holds.
    Then, for every $\gamma \in (0,1)$, there exists a random variable $\tilde{Z}_\mF := \sup_{f \in \mF} G_P(f)$ such that
    \begin{align*}
        &\Pr \left\{ \abs{Z_\mF - \tilde{Z}_\mF} > \frac{b K_n}{\gamma^{1/2}n^{1/2 - 1/q}} + \frac{(b\sigma)^{1/2}K_n^{3/4}}{\gamma^{1/2}n^{1/4}} + \frac{(b\sigma^2 K_n^2)^{1/3}}{\gamma^{1/3}n^{1/6}} \right\} \leq C\left(\gamma + \frac{\log n}{n}\right),
    \end{align*}
    where $K_n = cv(\log n \vee \log (Ab/\sigma))$ and $c,C>0$ are constants depends on only $q$.
\end{theorem}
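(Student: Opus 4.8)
The statement is a restatement of Corollary 2.2 of \cite{chernozhukov2016empirical} specialized to our context, so the plan is first to verify that the standing hypotheses we impose coincide with those required there, and then to invoke the cited coupling inequality verbatim. Concretely, I would check three things: (a) that $\mathcal{F}$ is of VC type in the stated sense, i.e. the uniform covering number obeys $\sup_Q N(\varepsilon\|F\|_{L_2},\mathcal{F},\|\cdot\|_{L_2(Q)}) \le (A/\varepsilon)^v$ for all $\varepsilon \in (0,1)$; (b) that the measurable envelope $F$ satisfies $\Ep_\mu[|F(X)|^q]^{1/q} \le b$; and (c) that the weak-variance/moment bounds $\sup_{f\in\mathcal{F}}\Ep_\mu[f(X)^k] \le \sigma^2 b^{k-2}$ hold for $k=2,3,4$. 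Once these match, the conclusion, including the explicit form $K_n = cv(\log n \vee \log(Ab/\sigma))$ and the three-term rate, follows directly by reading off their Corollary 2.2.

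If instead a self-contained argument were wanted, I would reproduce the Chernozhukov--Chetverikov--Kato coupling scheme underlying that corollary. The idea proceeds in three movements. First, discretize $\mathcal{F}$ at a scale $\delta$ using a minimal $\delta$-net $\mathcal{F}_\delta$ whose cardinality is controlled by the VC-type entropy bound; this reduces the supremum over $\mathcal{F}$ to a maximum over finitely many coordinates, up to a discretization remainder. Second, apply a finite-dimensional Gaussian coupling --- a Slepian--Stein smoothing of the max functional combined with a Lindeberg/Stein swap, which is the technical heart of \cite{chernozhukov2016empirical} --- to couple the normalized empirical sums over $\mathcal{F}_\delta$ to a centered Gaussian vector $G_P$ with matching covariance $\Ep_\mu[f f']$. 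Third, bound the discretization error on both the empirical and the Gaussian sides by Dudley's entropy integral together with Talagrand's concentration inequality, and optimize over $\delta$. The three error terms in the displayed bound arise respectively from the coupling step (the $n^{-1/6}$ term, which dominates) and from the empirical-process fluctuation and envelope contributions (the $n^{-1/4}$ and $n^{-1/2+1/q}$ terms).

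The main obstacle is the tension in the choice of the net scale $\delta$ and, with it, the effective dimension used in the Gaussian coupling. A finer net shrinks the discretization remainder but inflates the dimension-dependent coupling constant, which enters through $K_n$; the precise balancing of these competing effects, tracked carefully in the envelope moment $b$ and the weak variance $\sigma^2$, is what produces the exponents $1/2-1/q$, $1/4$, and $1/6$ together with the logarithmic factor $K_n$. Since this optimization and the associated anti-concentration and concentration estimates are exactly what \cite{chernozhukov2016empirical} establishes in full generality, the cleanest route for our paper is the first one: verify the hypotheses and cite their Corollary 2.2.
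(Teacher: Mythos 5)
Your proposal matches the paper exactly: the paper states this result as an imported theorem (Corollary 2.2 of \cite{chernozhukov2016empirical}, adapted to the setting) and gives no proof of it in the appendix, which is precisely your recommended route of verifying the VC-type entropy, envelope, and moment hypotheses and then citing the coupling inequality verbatim. Your supplementary sketch of the underlying discretization--coupling--entropy argument is a fair summary of the CCK machinery, but it is not needed and is not attempted by the paper either.
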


Our Lemma \ref{lem:intermediateGAR} is built on Theorem 2.1 in \cite{chernozhukov2014gaussian} and it requires pointwise measurability on a function class. The function class $\mF$ is said to be pointwise measurable if there exists a countable subclass $\mathcal{G} \subset \mF$ such that for every $f \in \mF$ there exists a sequence $g_m \in \mathcal{G}$ with $g_m \to f$ pointwise; See Section 2.3 in \cite{vdVW1996} for instance. The pointwise measurablity ensures a supremum of empirical process is measurable map from a sample space $\Omega$ to $\mathbb{R}$. 
    
Note that a collection of functions that is separable for the supremum norm satisfies the pointwise measurability (cf. Example 2.3.4 in \cite{vdVW1996}). Since bounded Lipschitz function classes with its totally bounded domain are separable with respect to the supremum norm, the function class $\Xi^{\lip,\gamma}(L, S)$ must be pointwise measurable; see Theorem 11.2.4 and Corollary 11.2.5 in \cite{dudley_2002} for details.

Second, we present several results for a covering number of a set of DNNs and their integration.
These results will be used in the condition of the non-asymptotic approximation theorem.

\begin{lemma}[$L^2$-entropy number of $\Xi(\mX)$, Lemma 5 in \cite{schmidt2017nonparametric}]\label{lem:entropy}
Given a neural network, let $V$ be a product of a number of nodes in $L$ layers. 
For any $\varepsilon> 0$, we have 
\begin{equation}\label{eq:entropy}
    \log N(\varepsilon, \Xi(L,S), \|\cdot\|_{L^2(Q)}) \leq (S+1) \log\left(\frac{4(L+1)V^2}{\varepsilon} \right), 
    \end{equation}
    where $Q$ is any finite measure on $(\mX, \mA)$.
\end{lemma}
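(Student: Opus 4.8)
The plan is to prove \eqref{eq:entropy} as a standard covering-number estimate, reducing a cover of the function class to a cover of its finite-dimensional parameter space. First I would note that every $f \in \Xi^{\lip}(L,S)$ is determined by a parameter vector $\theta = (A_\ell,b_\ell)_{\ell=1}^{L+1}$ carrying at most $S$ nonzero coordinates, each of absolute value at most $1$ (the normalization underlying Theorem \ref{thm:NN-approx}). Writing $f_\theta$ for the network with parameters $\theta$, the core step is a Lipschitz-in-parameters estimate: if $\theta,\theta'$ share the same sparsity pattern and $\|\theta - \theta'\|_\infty \le \delta$, then
\[
\|f_\theta - f_{\theta'}\|_{L_\infty} \le \delta (L+1) V.
\]
I would prove this by induction over the layers. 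Since the ReLU activation is $1$-Lipschitz, the domain is $[0,1]^d$, and all weights are bounded by $1$, every intermediate activation is bounded by the corresponding layer width; a perturbation introduced in one layer therefore propagates through the subsequent affine-then-ReLU maps with a multiplicative factor controlled by the number of nodes, and the at-most-$V$ forward paths together with the $L+1$ layers produce the factor $(L+1)V$.

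Given this, I would build an $L_\infty$-cover of the class from a grid on the parameters. For a fixed sparsity pattern, discretizing each of the $S$ active coordinates of $[-1,1]$ at scale $\delta$ yields at most $(2/\delta + 1)^S$ grid points, and choosing $\delta = \varepsilon/((L+1)V)$ makes the induced functions an $\varepsilon$-cover in $\|\cdot\|_{L_\infty}$. To remove the dependence on the sparsity pattern I would count patterns: the total number of parameters is at most $V$, so there are at most $\binom{V}{S} \le V^S$ admissible patterns. Multiplying the per-pattern grid size by $V^S$ and taking logarithms collapses to $(S+1)\log\bigl(4(L+1)V^2/\varepsilon\bigr)$, where one factor of $V$ comes from the Lipschitz constant (grid fineness) and the other from the combinatorial count of patterns, while the constant $4$ and the shift from $S$ to $S+1$ absorb the $+1$'s in the per-coordinate grid count. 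Finally, since $\|g\|_{L_2(Q)} \le \|g\|_{L_\infty}$ whenever $Q$ is a probability measure (and the general finite case follows by rescaling $\varepsilon$), any $L_\infty$-cover is also an $L_2(Q)$-cover, which yields the stated distribution-free bound.

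The main obstacle is the Lipschitz-in-parameters estimate: one must carefully track how a coordinatewise perturbation of the weights propagates through the composition of affine maps and ReLUs, using boundedness of the activations to prevent exponential blow-up and to pin down the precise $(L+1)V$ dependence rather than a cruder $V^{L}$ bound. Everything else — the grid construction, the combinatorial count of sparsity patterns, and the passage from $\|\cdot\|_{L_\infty}$ to $\|\cdot\|_{L_2(Q)}$ — is routine once that estimate is in hand.
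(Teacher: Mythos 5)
Your argument is correct, but it takes a genuinely different route from the paper's. The paper disposes of this lemma by three short reductions: (i) a cover of the larger class $\Xi(L,S)$ at radius $\varepsilon/2$ induces a cover of the subclass $\Xi^{\lip}(L,S)=\lip(\mX)\cap\Xi(L,S)$ at radius $\varepsilon$; (ii) for any norm dominated by $\|\cdot\|_{\infty}$, metric entropy is bounded by bracketing entropy, which is in turn bounded by sup-norm entropy (Lemma 9.22 of \cite{kosorok2008introduction}); and (iii) the sup-norm entropy of $\Xi(L,S)$ is exactly what Lemma 5 of \cite{schmidt2017nonparametric} bounds, so the paper simply cites it. What you have written is, in effect, a self-contained reproof of that cited external lemma: the Lipschitz-in-parameters estimate $\|f_\theta-f_{\theta'}\|_{L_\infty}\le \delta(L+1)V$, the $\delta$-grid on the $S$ active coordinates with $\delta=\varepsilon/((L+1)V)$, and the $\binom{V}{S}\le V^S$ count of sparsity patterns are precisely the ingredients of Schmidt-Hieber's proof. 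Each approach buys something: the paper's proof is two lines and delegates the combinatorics to the reference, while yours is self-contained and makes explicit a hypothesis the paper leaves implicit, namely that the covering bound is for networks whose parameters are bounded by $1$ (a normalization absent from the paper's definition of $\Xi(L,S)$); your passage from $\|\cdot\|_{L_\infty}$ to $\|\cdot\|_{L_2(Q)}$ by direct domination is also more elementary than the bracketing detour. Two small points would tighten your write-up: your grid networks need not themselves be $1$-Lipschitz, so you produce an \emph{external} cover of $\Xi^{\lip}(L,S)$ — either allow external centers or halve the radius as the paper does in step (i), which the slack in your constants (the factor $4$ and $S+1$ versus $S$) easily absorbs; and the claim that the total number of parameters is at most $V$ deserves one line of justification (a sum of products of consecutive layer widths is dominated by the product of all widths plus one), though this is the same counting used in the reference.
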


\begin{lemma}[Uniform entropy integral for $\Xi^{\lip,\gamma}(L,S)$]\label{lem:entropyint}
    Let $F_S$ be a envelope function of $\Xi^{\lip,\gamma}(L,S)$, that is, $F_S(x) \geq |f(x)|$ holds for all $f \in \Xi^{\lip,\gamma}(L,S)$ and $x \in \mX$. 
    For each integer $S$, define the uniform entropy integral as
    \begin{align*}
    J_s(\delta)&:=J(\delta, \Xi^{\lip,\gamma}(L,S), F_S) \\
    &:= \int_0^\delta \sup_Q \sqrt{1+\log N(\varepsilon\|F_S\|_{L^2(Q)}, \Xi^{\lip,\gamma}(L,S), \|\cdot\|_{L^2(Q)} )}d\varepsilon,
    \end{align*}
    where the supremum is taken over all finitely discrete probability measures on $(\mX, \mA)$.
    Then, for each integer $S$, there exists a global constant $C$ such that 
    \begin{equation}
    J_S(\delta)  \le C \sqrt{S} \sqrt{\delta^2 \log (1/\delta)}.
    \end{equation}
\end{lemma}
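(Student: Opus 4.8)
The plan is to reduce the uniform entropy integral to an elementary one-dimensional integral, exploiting two features of $\Xi^{\lip}(L,S)$: its members are uniformly bounded, and its $L_2$-covering numbers are controlled by Lemma \ref{lem:entropy}. First I would fix the envelope. Since every $f \in \Xi^{\lip}(L,S)$ maps $[0,1]^d$ into $[0,1]$, the constant function $F_S \equiv 1$ is a valid envelope. Crucially, because the supremum in the definition of $J_S(\delta)$ ranges over finitely discrete \emph{probability} measures $Q$, we have $\|F_S\|_{L_2(Q)}^2 = \int 1\, dQ = 1$, so $\|F_S\|_{L_2(Q)} = 1$ for every such $Q$. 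Hence the envelope normalization is trivial, $\varepsilon\|F_S\|_{L_2(Q)} = \varepsilon$, and Lemma \ref{lem:entropy} applies directly to bound, uniformly in $Q$,
\[
\log N(\varepsilon\|F_S\|_{L_2(Q)}, \Xi^{\lip}(L,S), \|\cdot\|_{L_2(Q)}) \le (S+1)\log\!\left(\frac{4(L+1)V^2}{\varepsilon}\right).
\]

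Next I would factor out the $\sqrt{S}$ dependence and peel off the architecture constant. Writing $A := 4(L+1)V^2$ and using the elementary inequality $1 + (S+1)x \le (S+1)(1+x)$ (valid since $S+1 \ge 1$) with $x = \log(A/\varepsilon) \ge 0$ for $\varepsilon < A$, the integrand obeys $\sqrt{1 + \log N(\cdots)} \le \sqrt{S+1}\,\sqrt{\log(eA/\varepsilon)}$. Note that on the domain of integration $\varepsilon \le \delta < 1 < A$, so $\log(A/\varepsilon)>0$ and this step is legitimate. Integrating leaves $J_S(\delta) \le \sqrt{S+1}\int_0^\delta \sqrt{\log(A'/\varepsilon)}\,d\varepsilon$, where $A' := eA$.

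The analytic core is the estimate $\int_0^\delta \sqrt{\log(A'/\varepsilon)}\,d\varepsilon \le C_0\,\delta\sqrt{\log(A'/\delta)}$. I would obtain it cleanly by the change of variables $\varepsilon = \delta t$, giving $\delta\int_0^1 \sqrt{\log(A'/\delta) + \log(1/t)}\,dt$, and then applying subadditivity of $\sqrt{\cdot}$ together with the finite Gamma integral $\int_0^1 \sqrt{\log(1/t)}\,dt = \Gamma(3/2) = \tfrac{\sqrt{\pi}}{2}$. This yields $\int_0^\delta \sqrt{\log(A'/\varepsilon)}\,d\varepsilon \le \delta\sqrt{\log(A'/\delta)} + \tfrac{\sqrt\pi}{2}\delta$, and since $\log(A'/\delta)\ge 1$ in the relevant range the second term is absorbed into the first.

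Finally, I would absorb the architecture-dependent constant. Writing $\log(A'/\delta) = \log A' + \log(1/\delta)$, for $\delta$ small enough that $\log(1/\delta) \ge \log A'$ we obtain $\log(A'/\delta) \le 2\log(1/\delta)$, hence
\[
J_S(\delta) \le C\sqrt{S+1}\,\delta\sqrt{\log(1/\delta)} \le C'\sqrt{S}\,\sqrt{\delta^2\log(1/\delta)},
\]
which is the claim. The main obstacle is the one-dimensional integral estimate and, relatedly, the bookkeeping needed to fold the network-size factor $A = 4(L+1)V^2$ into a single global constant $C'$: this absorption is valid only once $\delta$ falls below a threshold determined by the architecture, which is precisely the small-$\delta$ regime entering the Gaussian-approximation bounds used downstream. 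I would therefore state the estimate for $\delta$ below that threshold and keep the $\sqrt{S}$ factor explicit, as the lemma does.
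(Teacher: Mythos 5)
Your proof is correct and takes essentially the same route as the paper: both apply Lemma \ref{lem:entropy} with the envelope normalized to $1$, pull out the $\sqrt{S}$ factor, and reduce the claim to the one-dimensional estimate $\int_0^\delta \sqrt{\log(A/\varepsilon)}\,d\varepsilon \lesssim \delta\sqrt{\log(A/\delta)}$ with $A = 4(L+1)V^2$, followed by absorbing $\log A$ into the constant. The only differences are cosmetic — you evaluate the core integral via the scaling substitution and the Gamma integral $\int_0^1\sqrt{\log(1/t)}\,dt = \sqrt{\pi}/2$ where the paper uses the substitution $\varepsilon \mapsto A/\varepsilon$ and a self-bounding integration by parts — and you are in fact more careful than the paper in noting that treating $C$ as a global constant requires $\delta$ below an architecture-dependent threshold.
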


\if0

\begin{remark}[Diverging entropy]
    The function class $\mF_s$ and its envelope $F_s$ change with the DNN's parameter $s$.
\end{remark}

    \begin{remark}[Non-existence of the Gaussian Process over the bounded Lipschitz class]
    Cf. Theorem 10.22 in \citep{dudley2014uniform}. 
    \end{remark}

    Here, we introduce an abstract Gaussian approximation theory derived in \citep{chernozhukov2014gaussian}.

\begin{theorem}[non-asymptotic Gaussian approximation of $Z_s$]
Let $G_{P}$ be a tight Gaussian process in $\ell^{\infty}(\mF_s)$ with mean zero and covariance function
    \[
    \bE[G_{P}(f)G_{P}(g)]=\bE_\mu[f(X)g(X)], \forall f, g \in \mF_s.
    \]
    Let $\tilde{Z}_s:= \sup_{\phi \in \mF_s} G_{P}(\phi)$. Let $\kappa_{s,n}$ be a positive constant that satisfies $\kappa_{s,n}^3 \geq \bE_\mu[\|\frac{1}{n}\sum_{i=1}^n|\phi(X_i)|^3\|_{\mF_s}]$. Moreover, define $M_s:=\max_{1\leq i \leq n} F_s(X_i),$ and $\mF_s\cdot\mF_s:=\{fg: f \in \mF_s, g \in \mF_s\}$.
    
    Then, under Assumption \ref{assumption}, for every $\varepsilon \in (0, 1]$ and $\gamma \in (0,1)$
    \begin{equation}\label{eq:nonasymptotic}
    P\left(|Z_s - \tilde{Z}_s| > \tilde{K}(q) \Delta_{s,n}(\varepsilon, \gamma) \right) \leq \gamma(1+\delta_{s,n}(\varepsilon, \gamma)) + \frac{C\log n}{n},
    \end{equation}
    where $\tilde{K}(q) > 0$ depends only on $q$, 
    \begin{align*}
     \Delta_{s,n}(\varepsilon, \gamma) & := \psi_{s,n}(\varepsilon)
     \\
    &+\gamma^{-1/q}\varepsilon\|F_s\|_{L^2(P)} + n^{-1/2}\gamma^{-1/q}\|M_s\|_{q}\\
    &+ n^{-1/2}\gamma^{-2/q}\|M_s\|_{2}\\&+n^{-1/4}\gamma^{-1/2}(\bE[\|\bG_n\|_{\mF_s\cdot \mF_s}])^{1/2}H_{s,n}^{1/2}(\varepsilon)\\
    &+n^{-1/6}\gamma^{-1/3}\kappa_{s,n} H_{s,n}^{2/3}(\varepsilon),\\
    \delta_{s,n}(\varepsilon, \gamma)&  := \frac{1}{4}P\left\{(F/\kappa_{s,n})^3 1\left(F/\kappa_{s,n} > c\gamma^{-1/3}n^{1/3}H_{s,n}^{-1/3}(\varepsilon) \right) \right\},
    \end{align*}
    and
    \begin{equation}
    \psi_{s,n}(\varepsilon):= \bE[\|\bG_n \|_{\mF_\varepsilon}] \vee \bE[\|G_P \|_{\mF_\varepsilon}],\quad H_{s,n}(\varepsilon):=\left(\log n \vee (s+1)\log\left(\frac{4(L+1)V^2}{\varepsilon\|F_s\|_{L^2(P)}} \right)\right).
    \end{equation}
\end{theorem}

Next, we will derive bounds on the terms $\psi_{s,n}(\varepsilon)$, $\bE[\|\bG_n\|_{\mF_s\cdot \mF_s}]$, and $\kappa_{s,n}$ appeared in Theorem \ref{thm:naGAR}.
\begin{lemma}[Entropy based bounds]\label{lem:ent_bounds}
{\rc Suppose that for each $s$, some $b_s\geq \sigma_s >0$, and $q \in [4, \infty]$, we have $\sup_{\phi \in \mF_s} P|\phi|^k \leq \sigma_s^2b_s^{k-2} $ for $k=2,3$ and $\|F_s\|_{P,q} \leq b_s$.  These conditions would be satisfied under Assumption 1 or be mild.}
    
    Let $K_{s,n} = c(s+1)(\log n \vee \log (4(L+1)V^2b_s/\sigma_s))$ and $\varepsilon = \sigma_s/(b_sn^{1/2})$.
    Then, we have the following bounds:
    \begin{equation*}
    \psi_{s,n}(\varepsilon) \leq C b_s K_{s,n} n^{-1/2 + 1/q},
    \end{equation*}
    \[
   \bE[\|\bG_n\|_{\mF_s\cdot \mF_s}] \leq C(b_s \sigma_s K_{s,n}^{1/2} + b_s^2 K_{s,n}n^{-1/2 + 1/q}),
    \]  
    and
    \[
    \bE_\mu\left[\left\|\frac{1}{n}\sum_{i=1}^n|\phi(X_i)|^3\right\|_{\mF_s}\right] \leq C(b_s\sigma^2_s + b_s^3 K_{s,n} n^{-1+3/q}).
    \]  
\end{lemma}

Combining Theorem \ref{thm:naGAR} and Lemma \ref{lem:ent_bounds}, we obtain a useful corollary of the Gaussian approximation in the Kolmogorov distance. 

    \begin{theorem}[Gaussian approximation of $Z$ in the Kolmogorov distance]
    Suppose there exist positive constants $\underline{\sigma}, \overline{\sigma} >0$ that satisfy $\underline{\sigma}^2 \leq \int \phi^2(x) \mu(dx) \leq  \overline{\sigma}^2$ for all $\phi \in \mF_s$. Then, under Assumption \ref{assumption} and assumptions appeared in Lemma \ref{lem:ent_bounds}, we have
    \begin{align*}
         &\sup_{z \in \R}\left|P(Z_s \leq z) - P(\tilde{Z}_s \leq z)\right|  \leq C_{\sigma} r_1 \{\bE[\tilde{Z_s}]+\sqrt{1 \vee \log(\underline{\sigma}/r_1)} \} + r_2,
    \end{align*}
    where $C_{\sigma}$ is a constant depending only on $\underline{\sigma}$ and $\overline{\sigma}$, $r_1 = \tilde{K}(q) \Delta_{s,n}(\varepsilon, \gamma)$ and $r_2 = \gamma(1+\delta_{s,n}(\varepsilon, \gamma)) + \frac{C\log n}{n}$ in the previous theorem.
    \end{theorem}

Theorem \ref{thm:naGAR} derives the intermediate Gaussian approximation of the supremum of the empirical process to $\tilde{Z}_s$. In practice, we additionally require to deal with the tuning parameter $s$ so that the deterministic bias term goes to zero as the ``effective'' sample size grows. In order that we consider re-scaled empirical process divided by $1/s^{3/5}$. To be precise, 

\begin{align*}
    &\sqrt{\frac{n}{s^2}}W_1(\mu_n,\mu) &=     
      \underbrace{\sqrt{\frac{n}{s^{2}}}\left(W_1^{DNN}(\mu_n,\mu) \right)}_{\text{Intermediate Gaussian approximation}} - 
      \underbrace{\sqrt{\frac{n}{s^{2}}} \left( W_1^{DNN}(\mu_n,\mu)- W_1(\mu_n,\mu) \right) }_{\text{Bias of order}\   O(s^{-1/d}) \text{from Lemma \ref{lem:NN-approx}}}
\end{align*}

\fi

Third, we provide theoretical results for the expressive power of DNNs.
We will use these results for the approximation for the Wasserstein distance by DNNs.

\begin{lemma}[Theorem 5 in \cite{schmidt2017nonparametric}, adapted to our setting]  \label{thm:sh}
    For any function $f \in \lip(\mX)$ and any integers $m \geq 1$ and $N \geq 2^d \vee 2e^d$, there exists a deep neural network $\tilde{f}$ with parameters bounded by $1$, depth $L=8+(m+5)(1+ \log_2d)$ and a number of parameters $S \leq 141(d+2)^{3+d}N(m+6)$ such that
    {
    \begin{align*}
        \|\tilde{f} - f\|_{L^\infty} \leq 3(2+d^2)6^d N 2^{-m} + 3N^{-1/d}.
    \end{align*}
    }
\end{lemma}
We obtain this result by substituting $\beta = 1$, $r=d$, and $K=1$ in \citep{schmidt2017nonparametric}.
Based on the result, we obtain the following simplified result:
\begin{theorem}\label{thm:NN-approx}
    For any function $f^* \in \lip(\mX)$, there exists a constant $c' > 0$ such that for any $\varepsilon \in (0,1/2)$, there is a neural-network $f \in \Xi(L,S)$ with $ L=O((1 + \log S) (1+\log d))$ layers and all its parameters are bounded by $1$, such as 
    \begin{align*}
        \|f^*-f\|_{L^\infty} \leq c' S^{-1/d}.
    \end{align*}
\end{theorem}
We omit its proof because it is a simple application of Lemma \ref{thm:sh}.

\section{Proofs}\label{app:proofs}

\subsection{Proof of Lemma \ref{lem:NN-approx}}

\if0
From the definitions of $\hat{W}$, we obtain
\begin{align*}
   &|\hat{W}(\mu_1, \mu_2) - W(\mu_1, \mu_2)|  \\
   & = \left| \sup_{f \in \Xi^{\lip,\gamma}(L,S)} \left\{  \bE_{\mu_2}[f(X_i)] - \bE_{\mu_1}[f(X_i)] \right\} - \sup_{f \in \lip (\mX)} \left\{ \bE_{\mu_2}[f(X_i)] - \bE_{\mu_1}[f(X_i)]   \right\} \right|.
\end{align*}
Let $f^{*}$ and $f^{**}$ be functions that achieve the supremum of the above equation of the right-hand side. 
Then, we can rewrite the term as
\begin{align*}
    &|\hat{W}(\mu_1, \mu_2) - W(\mu_1, \mu_2)| \\
    & =  \left| \left\{ \bE_{\mu_2}[f^{*}(X)] - \bE_{\mu_1}[f^{*}(X)]\right\} -  \left\{ \bE_{\mu_2}[f^{**}(X)] - \bE_{\mu_1}[f^{**}(X)] \right\} \right|\\
    & \leq \left| \left\{ \bE_{\mu_2}[f^{*}(X)- f^{**}(X)] \right\} -  \left\{ \bE_{\mu_1}[f^{*}(X)-f^{**}(X)] \right\} \right| \\
    & \leq    \left\{ \left|\bE_{\mu_2}[f^{*}(X)- f^{**}(X)]\right| \right\} +    \left\{ \left|\bE_{\mu_1}[f^{*}(X)- f^{**}(X)]\right| \right\}  \\
    & \leq c S^{-1/d}.
\end{align*}
The last inequality follows from Theorem \ref{thm:NN-approx}, since $f^{*} \in  \Xi^{\lip,\gamma}(L,S)$ and $f^{**} \in \lip (\mX)$.

\fi

Fix $\mu_1, \mu_2$ and sufficiently large $S$.
Also, set $L=O((1+ \log S) (1+ \log d))$.
Throughout this proof, the existence of the supremum in $\lip(\mX)$ is assured by the fundamental property of the dual form of the Wasserstein distance (e.g., Theorem 5.10 in \cite{villani2008optimal}).
Also, the supremum in $\Xi^{\lip,\gamma}(L, S)$ is also assured owing to the compactness of the parameter space and the continuity of the activation function.

First, we show that {$W_1(\mu_1,\mu_2) - \hat{W}_1(\mu_1,\mu_2) \leq c S^{-1/d}$}.
We have
\begin{align*}
    &{W_1(\mu_1,\mu_2) - \hat{W}_1(\mu_1,\mu_2)}\\
    &=\sup_{f \in \lip(\mX)}\{ \Ep_{\mu_1}[f(X)] - \Ep_{\mu_2}[f(X)]\} - \sup_{f \in \Xi^{\lip,\gamma}(L,S)}\{ \Ep_{\mu_1}[f(X)] - \Ep_{\mu_2}[f(X)]\}\\
    &=( \Ep_{\mu_1}[f^*(X)] - \Ep_{\mu_2}[f^*(X)]) - \sup_{f \in \Xi^{\lip,\gamma}(L,S)}\{ \Ep_{\mu_1}[f(X)] - \Ep_{\mu_2}[f(X)]\},
\end{align*}
where $f^*$ is the supremum in $\lip(\mX)$.

By Lemma \ref{lem:approx_lip}, there exists $\hat{f} \in \Xi(L,S)$ with $\gamma = O(S^{-1/d})$ which satisfies
\begin{align*}
    \|f^* - \hat{f}\|_{L^\infty} \leq c'S^{-1/d},
\end{align*}
where $c' > 0$ is an existing constant.
By using $\hat{f} \in \Xi^{\lip,\gamma}(L,S)$, we continue the inequality as
\begin{align*}
    &( \Ep_{\mu_1}[f^*(X)] - \Ep_{\mu_2}[f^*(X)]) - \sup_{f \in \Xi^{\lip,\gamma}(L,S)}\{ \Ep_{\mu_1}[f(X)] - \Ep_{\mu_2}[f(X)]\}\\
    &\leq  \{(\Ep_{\mu_1}[f^*(X)] - \Ep_{\mu_2}[f^*(X)]) -  (\Ep_{\mu_1}[\hat{f}(X)] - \Ep_{\mu_2}[\hat{f}(X)])\}\\
    &= \{\Ep_{\mu_1}[f^*(X)] - \Ep_{\mu_1}[\hat{f}(X)] \} + \{\Ep_{\mu_2}[\hat{f}(X)] - \Ep_{\mu_2}[f^*(X)] \}.
\end{align*}

Also, since $\mu_1$ is a probability measure, the H\"older's inequality provides
\begin{align*}
    \Ep_{\mu_1}[f^*(X)] - \Ep_{\mu_1}[\hat{f}(X)] =  \int_{\mX} (f^* - \hat{f})d\mu_1 \leq \|f^* - \hat{f}\|_{L^\infty}  \mu_1(\mX) = \|f^* - \hat{f}\|_{L^\infty}.
\end{align*}
For the term with $\mu_2$, we obtain the same inequality respectively.
Hence, we obtain that 
\begin{align}
    {W_1(\mu_1,\mu_2) - \hat{W}_1(\mu_1,\mu_2) \leq 2c'S^{-1/d}.} \label{ineq:ww1}
\end{align}

Second, we show {$ \hat{W}_1(\mu_1,\mu_2) - W_1(\mu_1,\mu_2)  \leq \gamma$}.
We evaluate the value as
\begin{align}
    &{\hat{W}_1(\mu_1,\mu_2) - W_1(\mu_1,\mu_2)} \notag \\
    &= \sup_{f \in \Xi^{\lip,\gamma}(L,S)}\{\Ep_{\mu_1}[f(X)] - \Ep_{\mu_2}[f(X)]\} - \sup_{f \in \lip(\mX)}\{\Ep_{\mu_1}[f(X)] - \Ep_{\mu_2}[f(X)]\} \notag \\
    & = \sup_{f' \in \Xi^{\lip,\gamma}(L,S)} \inf_{f \in \lip(\mX)}\{\Ep_{\mu_1}[f(X) - f'(X)] - \Ep_{\mu_2}[f(X) - f'(X)]\} \notag \\
    &= \sup_{f' \in \Xi^{\lip,\gamma}(L,S)} \inf_{f \in \lip(\mX)} \int (f(x) - f'(x)) d(\mu_1 - \mu_2)(x). \label{ineq:ww}
\end{align}

To bound the term, we consider a minimax distance $ \sup_{f' \in \Xi^{\lip,\gamma}(L,S)} \inf_{f \in \lip(\mX)} \|f - f'\|_{L^\infty}$.
Fix $f' \in \Xi^{\lip,\gamma}(L,S)$ arbitrary.
Then, we consider $f \in \lip(\mX)$ such that $f(\Bar{x}) = f'(\Bar{x})$ for some $\bar{x} \in \mX$, then define $f(x)$ and for $x \in \mX \backslash \{\Bar{x}\}$ as
\begin{align*}
    f(x) = 
    \begin{cases}
        f'(x) & \mbox{~if~} \abs{f'(x) - f'(\bar{x})} \leq \|x - \Bar{x}\|_2 \\
        f(\bar{x}) + \|x-\Bar{x}\|_2 & \mbox{~if~}f'(x) - f'(\bar{x}) > \|x - \Bar{x}\|_2. \\
        f(\bar{x}) - \|x-\Bar{x}\|_2 &\mbox{~if~} f'(x) - f'(\bar{x}) <  - \|x - \Bar{x}\|_2.
    \end{cases}
\end{align*}
Such $f$ always exists with a suitable select of $\bar{x}$, due to the $1$-Lipschitz continuous property.
For any $x \in \mX$, if $\abs{f'(x) - f'(\bar{x})} \leq \|x - \Bar{x}\|_2$ holds,
\begin{align*}
    f'(x) - f(x) = 0.
\end{align*}
If $f'(x) - f'(\bar{x}) > \|x - \Bar{x}\|_2$ holds, then
\begin{align*}
    f'(x) - f(x) &= (f'(x) - f'(\bar{x})) - ( f(x) - f(\bar{x})) \\
    &=(f'(x) - f'(\bar{x})) -\|x-\Bar{x}\|_2 \\
    & \leq \|x-\Bar{x}\|_2 + \gamma  -\|x-\Bar{x}\|_2\\
    &= \gamma.
\end{align*}
If $f'(x) - f'(\bar{x}) < - \|x - \Bar{x}\|_2$ holds, the similar result is obtained, hence we have $|f'(x) - f(x)| \leq \gamma$.
Since the result holds for any $x \in \mX$ and $f' \in \Xi^{\lip,\gamma}(L,S)$, we obtain
\begin{align}
    \sup_{f' \in \Xi^{\lip,\gamma}(L,S)} \inf_{f \in \lip(\mX)} \|f' - f\|_{L^\infty} \leq \gamma. \label{ineq:supinf}
\end{align}

Now, we bound the term \eqref{ineq:ww}.
By the H\"older's inequality, we obtain
\begin{align}
    &\sup_{f' \in \Xi^{\lip,\gamma}(L,S)} \inf_{f \in \lip(\mX)} \int (f(x) - f'(x)) d(\mu_1 - \mu_2)(x) \notag \\
    &\leq \sup_{f' \in \Xi^{\lip,\gamma}(L,S)} \inf_{f \in \lip(\mX)} \|f-f'\|_{L^\infty} \int d\abs{\mu_1 - \mu_2}(x) \notag \\
    & \leq 2\gamma, \label{ineq:ww2}
\end{align}
where the last inequality holds by \eqref{ineq:supinf} and $\int d\abs{\mu_1 - \mu_2}(x) = 2TV(\mu_1, \mu_2) \leq 2$.
Here, $TV(\cdot,\cdot)$ denotes the total variation distance.

We combine \eqref{ineq:ww1} and \eqref{ineq:ww2} with $\gamma = O(S^{-1/d})$ and set $c = 2 c'$, then we obtain the statement.
\qed

We provide an additional result for the expressive power of DNNs with the constraint on its Lipschitz continuity.
\begin{lemma} \label{lem:approx_lip}
    For any function $f^* \in \lip(\mX)$, there exists a constant $c' > 0$ such that for any $\varepsilon \in (0,1/2)$, there is a function $f \in \Xi^{\lip, \gamma}(L,S)$ with sufficiently large $S$ and $ L=O((1 + \log S) (1+\log d))$ layers and $\gamma = O(S^{-1/d})$, such as 
    \begin{align*}
        \|f^*-f\|_{L^\infty} \leq c'' S^{-1/d},
    \end{align*}
    where $c'' > 0$ is an existing constant.
\end{lemma}
\begin{proof}
This proof contains the following two steps; (i) we define an approximator $f_0: \mathbb{R}^d \to \mathbb{R}$ whose Lipschitz constant is $1$, then (ii) we show that there exists $\tilde{f} \in \Xi^{\lip,\gamma}(L,S)$ which approximates $f_0$.

Fix $f^* \in \lip(\mX)$ and $ \varepsilon \in (0,1)$ and arbitrary.
In this proof, we consider an extended version of $f^*$ from $\mX$ to $\mathbb{R}^d$, i.e., we consider $f^*: \mathbb{R}^d \to \mathbb{R}$.
Its existence is always guaranteed by Section 2.5.2 in \cite{brudnyi2011methods}.
We take $f^*$ as satisfying $\|f^*\|_{L_1(\mathbb{R}^d)} \leq c^*\|f^*\|_{L^\infty}$ with some constant $c^* > 0$.

\textbf{Step (i)}: We define $f_0 $ using a convolution for $f^*$.
For preparation, we define a kernel function $k: \mathbb{R}^d \to \mathbb{R}$ such as
\begin{align*}
    k(x) = c\exp(-1/(1-\|x\|_2^2)^2) \textbf{1}(\|x\|_2 \leq 1),
\end{align*}
where $c := (\int_{\mathbb{R}^d} k(x) dx)^{-1}$, thus $\int_{\mathbb{R}^d} k(x) = 1$ holds.
Let us define a ball $\mB_d(0,r) := \{x \in \mathbb{R}^d \mid \|x\|_2 \leq r\} \in \mathbb{R}^d$ with a radius $r > 0$.
Also, for a set $\Omega \subset \mathbb{R}^d$, $\mB_d(\Omega,r) := \{x \in \mathbb{R}^d \mid \inf_{x' \in \Omega}\|x'-x\|_2 \leq r\}$ be a neighbour of $\Omega$.
Then, we define $f_0 : \mX \to \mathbb{R}$ which is a convoluted $f^*$ with the kernel $k$ as
\begin{align*}
    f_0(x)&:= \int_{\mB_d(\mX,\varepsilon)} f^*(x') \varepsilon^{-d} k\left(\frac{ x-x'}{ \varepsilon}\right) dx' .
\end{align*}
We note that $\int_{\mathbb{R}^d} \varepsilon^{-d}k(x / \varepsilon) dx=1 $ holds.
We can simply rewrite $f_0$ as
\begin{align*}
   f_0(x)&= \varepsilon^{-d} \int_{\mB_d(0,\varepsilon)} f^*(x-x') k\left(\frac{x'}{\varepsilon}\right)dx' = \int_{\mB_d(0,1)} f^*(x-\varepsilon x')k(x')dx'.
\end{align*}
Then, we evaluate the following distance between $f_0$ and $f^*$ as
\begin{align}
    \|f_0 - f^*\|_{L^\infty} &= \sup_{x \in \mX} \abs*{ \int_{\mB_d(0,1)}(f^*(x-\varepsilon x') - f^*(x) ) k(x') dx' } \notag \\
    &\leq \sup_{x \in \mX} \sup_{x' \in \mB_d(0,1)} \abs*{ f^*(x- \varepsilon x') -  f^*(x) } \int_{\mB_d(0,1)} \abs{k(x')} dx' \notag  \\
    & \leq \varepsilon \sup_{x' \in \mB_d(0,1)} \|x\|_2  \notag \\
    &=\varepsilon. \label{ineq:approx_f0}
\end{align}
where the first inequality follows the H\"older's inequality and the second inequality follows the $1$-Lipschitz continuity of $f^*$ and $\int_{\mB_d(0,1)} \abs{k(x')} dx' \leq \int_{\mathbb{R}^d} \abs{k(x')} dx'= 1$.

\textbf{Step (ii)}: We develop $\tilde{f} \in \Xi^{\lip,\gamma}(L_1,S_1)$ for approximating $f_0$ with a tuple $(L_1,S_1)$.
Since the function $ k'(x)  := \varepsilon^{-d} k(x/\varepsilon)$ belongs to $C^\infty(\mathbb{R}^d)$, $k'$ is Lipschitz continuous and hence a slight modification of Theorem \ref{thm:NN-approx} guarantees that there exists $f_1 \in \Xi(L_1,S_1)$ such that
\begin{align}
    \| k'- f_1\|_{L^\infty(\mB_d(\mX,\varepsilon))} \leq c_1 S_1^{-1/d}, \label{ineq:kernel}
\end{align}
with an existing constant $c_1 > 0$, sufficiently large $S_1$, and $L_1 = O((1+\log S_1)(1+\log d))$.
Also, we define its convolution $f_2(x) := \int_{\mB_d(\mX,\varepsilon)} f^*(x') f_1(x-x') dx'$.

We approximate $f_0$ using $f_1$ and its convolution with a set of grid points.
Let $\mJ := \{x_j\}_{j=1}^N \subset \mB_d(\mX,\varepsilon)$ be a non-random set of $N$ equally-separated grid points.
Then, we define $\tilde{f} \in \Xi(L_1 + 1, S_1 + 2N)$ as
\begin{align*}
    \tilde{f} (x) := N^{-1} \sum_{x_j \in \mJ} w_j f_1(x - x_j),
\end{align*}
where $w_j = f^*(x_j)$ is a coefficient for $j=1,...,N$.
Since the $f^*$ is Lipschitz continuous, the classical results for numerical integration (\cite{dick2013high}) evaluates its error as
\begin{align}
     \abs*{\int_{\mB_d(\mX,\varepsilon)} f^*(x') f_1(x-x') dx'  - \tilde{f}(x)} \leq c_2 N^{-1/d}, \label{ineq:integration}
\end{align}
for any $x \in \mX$, with an existing constant $c_2 > 0$.
Combining the results \eqref{ineq:approx_f0}, \eqref{ineq:kernel} and \eqref{ineq:integration}, we obtain
\begin{align*}
    \|f^* - \tilde{f}\|_{L^\infty} &\leq \|f^* - f_0\|_{L^\infty} + \|f_0 - f_2\|_{L^\infty} + \|f_2 - \tilde{f}\|_{L^\infty}\\
    & \leq \varepsilon + \|f^*\|_{L_1(\mathbb{R}^d)} \|k' - f_1\|_{L^\infty(\mB_d(\mX,\varepsilon))} + c_1 N^{-1/d} \\
    & \leq \varepsilon + c^* \|f^*\|_{L^\infty} c_1 S_1^{-1/d} + c_2 N^{-1/d},
\end{align*}
where the second inequality follows the Young's convolution inequality.
We set $S=S_1 + 2N$ and $\varepsilon = S^{-1/d}$, then we have $\|f^* - \tilde{f}\|_{L^\infty} \leq c_3 S^{-1/d}$ with a constant $c_3 > 0$ depending on $c^* ,\|f^*\|_{L^\infty}, c_1$ and $c_2$.

Finally, we evaluate the Lipschitz continuity of $ \tilde{f}$.
For any $x_1,x_2 \in \mX$, we evaluate the following difference as follows:
\begin{align*}
    &\abs{\tilde{f}(x_1) - \tilde{f}(x_2)}\\
    &\leq \abs{f_0(x_1) - f_0(x_2)} + 2\|f_0 - \tilde{f}\|_{L^\infty} \\
    & \leq \int_{\mathbb{R}^d}\abs{f^*(x_1 - x') - f^*(x_2 -x')} k(x') dx' + 2 \|f^*\|_{L^\infty} c^* c_1 S_1^{-1/d} + 2c_2 N^{-1/d} \\
    & \leq \|x_1-x_2\|_{L^\infty} \int_{\mathbb{R}^d} k(x')dx' +  2 \|f^*\|_{L^\infty} c^* c_1 S_1^{-1/d} + 2c_2 N^{-1/d}\\
    & \leq \|x_1-x_2\|_{L^\infty}  + c_3 S^{-1/d},
\end{align*}
where the second inequality follows the $1$-Lipschitz continuity of $f^*$ and the last inequality follows $\int k(x') dx'=1$.
Then,  we obtain the statement.
\end{proof}

\subsection{Proof of Lemma \ref{lem:entropy}}

    First, by the definition of $\Xi^{\lip,\gamma}(L,S) \subset \Xi(L,S)$, we have
    \begin{align*}
        \log N(\varepsilon, \Xi^{\lip,\gamma}(L,S),\|\cdot\|) \leq \log N(\varepsilon/2, \Xi(L,S),\|\cdot\|),
    \end{align*}
    for any $L,S$ and $\varepsilon > 0$.
    Then, from Lemma 9.22 in \citep{kosorok2008introduction}, for any norm $\|\cdot\|$ dominated by $\|\cdot\|_{L^\infty}$, we have 
    \begin{align*}
        \log N (\varepsilon/2,  \Xi(L,S), \|\cdot\|) \leq \log N_{[]}(\varepsilon,  \Xi(L,S), \|\cdot\|) \leq \log N (\varepsilon/2,  \Xi(L,S), \|\cdot\|_{L^\infty}),
    \end{align*}
    where $\log N_{[]}(\varepsilon, \Omega, \|\cdot\|)$ is a bracketing nubmer of $\Omega$.
    Then, we apply Lemma \ref{lem:entropy} which bounds $\log N (\varepsilon,  \Xi(L,S), \|\cdot\|_{L^\infty})$ directly, and obtain the statement. \qed
    
    \subsection{Proof of Lemma \ref{lem:entropyint}}

    For any finite measure on $\mX$ and $\delta>0$, 
    \begin{align*}
        J(\delta) = \int_0^\delta & \sqrt{1+\log N(\varepsilon\|F_S\|_{L^2(Q)}, \Xi^{\lip,\gamma}(L,S) , \|\cdot\|_{L^2(Q)} )}d\varepsilon \\ &\leq \int_0^\delta  \sqrt{1+S\log (4(L+1)V^2/\varepsilon)}d\varepsilon.
    \end{align*}
    {Here, recall that $V$ be a product of a number of nodes in $L$ layers.
    }
    Observe that
    \begin{align*}
         \int_0^\delta  \sqrt{1+S\log (4(L+1)V^2/\varepsilon)}d\varepsilon \leq 4(L+1)V^2 \sqrt{S} \int_{4(L+1)V^2/\delta}^{\infty} \frac{\sqrt{1+\log \varepsilon}}{\varepsilon^2}d\varepsilon.
    \end{align*}
    Since $4(L+1)V^2 \geq e$ holds, an integration by parts provides the following:
    \begin{align*}
        &\int_{4(L+1)V^2/\delta}^{\infty} \frac{\sqrt{1+\log \varepsilon}}{\varepsilon^2}d\varepsilon\\
        & = \left[ 
       -\frac{\sqrt{1+\log \varepsilon}}{\varepsilon} \right]^\infty_{4(L+1)V^2/\delta} + \frac{1}{2} \int_{4(L+1)V^2/\delta}^{\infty} \frac{1}{\varepsilon^2 \sqrt{1+ \log \varepsilon}} d\varepsilon\\
       & \leq \frac{\sqrt{1 + \log (4(L+1)V^2/\delta)}}{4(L+1)V^2/\delta} + \frac{1}{2} \int_{4(L+1)V^2/\delta}^{\infty} \frac{\sqrt{1+\log \varepsilon}}{\varepsilon^2} d\varepsilon\\
       & \leq \frac{\sqrt{1 + \log (4(L+1)V^2/\delta)}}{4(L+1)V^2/\delta} + \frac{1}{2}\frac{2\sqrt{1 + \log (4(L+1)V^2/\delta)}}{4(L+1)V^2/\delta}\\
       & \leq \frac{\sqrt{2} \sqrt{\log (4(L+1)V^2)/\delta} }{4(L+1)V^2/\delta}.
    \end{align*}
    Since $4(L+1)V^2/\delta \geq 4(L+1)V^2 \geq e$, we obtain
    \[
    J(\delta) \leq C\sqrt{S} \sqrt{\delta^2 \log(1/\delta)},
    \]
    where $C$ is a sufficiently large constant.
    \qed

    \subsection{Proof of Lemma \ref{lem:intermediateGAR} }

    We consider Gaussian approximation to suprema of empirical processes indexed by the expanding function class $\Xi^{\lip,\gamma}(L, S)$, and then apply Theorem \ref{thm:CCK}.
    Since the covering number of $\Xi^{\lip,\gamma}(L,S)$ described in Lemma \ref{lem:entropy} is
    \[
    N(\varepsilon, \Xi^{\lip,\gamma}(L,S) , \|\cdot\|_{L^2(Q)}) \leq \left(\frac{4(L+1)V^2}{\varepsilon} \right)^{(S+1)},
    \]
    the function class $\Xi^{\lip,\gamma}(L,S)$ satisfies the condition in Theorem \ref{thm:CCK} with fixed parameters $(L, S)$. 
    About the moment conditions for the theorem, since the sample space $\mX$ is compact subset of $\mathbb{R}^d$ and $f \in \Xi^{\lip,\gamma}(L,S)$ is bounded function, there exist for each $S$, some $b_S \geq \sigma_S >0$, we have $\sup_{\phi \in \Xi^{\lip,\gamma}(L,S)} \Ep_\mu[\abs{f(X)}^k] \leq \sigma_S^2b_S^{k-2} $ for $k=2,3$ and $\|F_S\|_{P,q} \leq b_S$ for any $q =\infty$. Therefore, we can apply Theorem \ref{thm:CCK}.
    Here, let us define
    \begin{equation}\label{eq:K_Sn}
    K_{S,n} = c(S+1)(\log n \vee \log (4(L+1)V^2b_S/\sigma_S)),
    \end{equation}
    and will set $K_n = K_{S,n}$ as in Theorem \ref{thm:CCK}.
    
    We derive a stochastic process whose supremum can approximate $\sqrt{n/S}{ \hat{W}_1(\mu_n,\mu)}$.
    To the end, we define a Gaussian process $\{G_W(f):f \in \Xi^{\lip,\gamma}(L,S)\}$ as defined in Step 2 in Section \ref{sec:approx}.
    Since {$\hat{W}_1(\mu_n,\mu)$} is regarded as an empirical process, Theorem \ref{thm:CCK} shows
    \begin{align*}
        &\Pr\Biggl(  \abs*{ \sqrt{n} {\hat{W}_1(\mu_n,\mu)} -  \sqrt{S}\hat{Z}_W } > 
    \frac{b_S K_{S,n}}{\gamma^{1/2} n^{1/2} } + \frac{(b_S \sigma_S)^{1/2} K_{S,n}^{3/4}}{\gamma^{1/2} n^{1/4}  } + \frac{(b_S \sigma_S^2) K_{S,n}^{2/3}}{\gamma^{1/3} n^{1/6} } \Biggr)\\
    &\leq C\left( \gamma + \frac{\log n}{n}  \right),
    \end{align*}
    for every $\gamma \in (0,1)$.
    The multiplier $\sqrt{S}$ for $\hat{Z}_{W}$ comes from a scaled covariance function of the Gaussian process $G_W(f)$.
    By dividing the both hand sides inside the probability by $\sqrt{S}$, we obtain the following coupling result
    \begin{align*}
        &\Pr\left( \abs*{ \sqrt{\frac{n}{S}}{\hat{W}_1(\mu_n,\mu)} -  \hat{Z}_W } > 
        \frac{b_S K_{S,n}}{\gamma^{1/2} n^{1/2} \sqrt{S}} + \frac{(b_S \sigma_S)^{1/2} K_{S,n}^{3/4}}{\gamma^{1/2} n^{1/4}  \sqrt{S}} + \frac{(b_S \sigma_S^2) K_{S,n}^{2/3}}{\gamma^{1/3} n^{1/6} \sqrt{S}} \right)\\
        &\leq C\left( \gamma + \frac{\log n}{n}  \right),
    \end{align*}
    where $c, C > 0$ are positive constants.
    Here, we know that $b_S$ is finite due to the boundedness of $f \in \Xi^{\lip,\gamma}(L,S)$ and $K_{S,n} = O(S \log n)$.
    As taking $\gamma = O((\log n)^{-1})$, we obtain
    \begin{align*}
        &\Pr\left( \abs*{ \sqrt{\frac{n}{S}}{\hat{W}_1(\mu_n,\mu)} -  \hat{Z}_W } > 
        \frac{  c_W'' S^{1/2} \log^{1/2} n}{ n^{1/2} } + \frac{c_W'' \sigma_S^{1/2} S^{1/4}  \log^{5/4} n}{ n^{1/4}  } + \frac{c_W'' \sigma_S^2 S^{1/6} \log n}{ n^{1/6} } \right)\\
        &\leq C'\left( \frac{1}{\log n} + \frac{\log n}{n}  \right),
    \end{align*}
    where $C',c_W'' > 0$ are constants depending on $b_S$.
    Regardless the constants, the term $\frac{S^{1/6} \log n}{n^{1/6}}$ is larger than $\frac{S^{1/2} \log^{1/2}n}{n^{1/2}}$ and $\frac{S^{1/4} \log^{5/4}n}{n^{1/4}}$ as $S/n \to 0$, hence we obtain the statement with $c_W = 3c_W''$.
    \qed

    \subsection{Proof of Theorem \ref{thm:GAR_kolmogorov_2}}
    We combine the results by Lemma \ref{lem:NN-approx} and Lemma \ref{lem:intermediateGAR}.
    The target probability is bounded as
    \begin{align*}
        &\Pr\left( \abs*{ \sqrt{\frac{n}{S}}{W_1(\mu_n, \mu)} - \hat{Z}_W } > \frac{c_W \sigma^2S^{1/6} \log n}{n^{1/6}}  \right)\\& \leq \Pr\left( \abs*{ \sqrt{\frac{n}{S}}{\hat{W}_1(\mu_n, \mu)}  - \hat{Z}_W } > \frac{c_W \sigma^2S^{1/6}\log n}{2n^{1/6}}  \right)\\
        &\quad + \Pr\left(\sqrt{\frac{n}{S}} \abs*{ {\hat{W}_1(\mu_n, \mu)}  -  {W_1(\mu_n, \mu)} } > \frac{c_W \sigma^2S^{1/6}\log n}{2n^{1/6}}\right) \\
        & \leq C'' \left( \frac{1}{\log n} + \frac{\log n}{n}  \right)+  \frac{2c_W\Ep_\mu[\abs{{\hat{W}_1(\mu_n, \mu)} - {W_1(\mu_n, \mu)}}]}{c_W \sigma^2} \left( \frac{n}{S}\right)^{2/3} \log^2n,
    \end{align*}
    where $C'' > 0$ is an existing finite constant depends on $n$.
    About the last inequality, the first term comes from Lemma \ref{lem:intermediateGAR}, and the second term follows the Markov's inequality. Applying Lemma \ref{lem:NN-approx}, we get
    \[
    {\Ep[\abs{\hat{W}_1(\mu_n, \mu) - W_1(\mu_n, \mu)}] \leq c S^{-1/d}}. 
    \]
    Finally, we obtain
\begin{align*}
    &\Pr\left( \abs*{\sqrt{\frac{n}{S}}{W_1(\mu_n, \mu)} - \hat{Z}_W } > \frac{c_W \sigma^2S^{1/6}}{n^{1/6}}  \right) \\
    &\leq C'' \left( \frac{1}{\log n} + \frac{\log n}{n}\right) + 2c_W c\left(\frac{n \log^{1/3}n}{S^{(2d+3)/2d}} \right)^{2/3},
\end{align*}
    then the right hand side converges to $0$ as $n \to \infty$
    under the condition on $S$. 
    \qed
    
    \subsection{Proof of Corollary \ref{cor:main} }
    
    We bound the Kolmogorov distance between $\sqrt{n/S}{W_1(\mu_n,\mu)}$ and $\hat{Z}_W$.
    To the end, we will show $\Pr ( \hat{Z}_W \leq z ) \leq \Pr ( \sqrt{n/S} {W_1(\mu_n,\mu)} \leq z ) + o(1)$.
    For any $z \in \mathbb{R}$ and some $z'$, we have
    \begin{align*}
        &\Pr \left( \hat{Z}_W \leq z \right) \\
        & \leq \Pr \left(\left\{ \hat{Z}_W \leq z \right\} \bigcap \left\{ \abs*{ \sqrt{\frac{n}{S}}{W_1(\mu_n,\mu)} - \hat{Z}_W  } \leq z'\right\} \right)\\
        & \quad + \Pr \left(\left\{ \hat{Z}_W \leq z \right\} \bigcap \left\{ \abs*{ \sqrt{\frac{n}{S}}{W_1(\mu_n,\mu)} - \hat{Z}_W  } > z'\right\} \right)\\
        &\leq \Pr \left( \sqrt{\frac{n}{S}}{W_1(\mu_n,\mu)} \leq z + z' \right) + \Pr \left( \abs*{ \sqrt{\frac{n}{S}}{W_1(\mu_n,\mu)} - \hat{Z}_W  } > z' \right).
    \end{align*}
    About the second probability, we have
    \begin{align*}
        &\Pr \left( \abs*{ \sqrt{\frac{n}{S}}{W_1(\mu_n,\mu)} - \hat{Z}_W  } > z' \right) \\
        & \leq \Pr \left(  \sqrt{\frac{n}{S}}\abs*{ {W_1(\mu_n,\mu) -  \hat{W}_1(\mu_n,\mu)} } > \frac{1}{2} z' \right) + \Pr \left( \abs*{ \sqrt{\frac{n}{S}}{\hat{W}_1(\mu_n,\mu)} - \hat{Z}_W  } >  \frac{1}{2} z' \right).
    \end{align*}
    Here, we set $z' = 2 S^{1/6} \log^2n/n^{1/6}$, then the two terms are $o(1)$ with the settings on $S$.
    About the first probability $\Pr ( \sqrt{n/S}{W_1(\mu_n,\mu)} \leq z + z' )$, by Assumption \ref{asmp:cont_cdf}, we obtain $\abs{\Pr ( \sqrt{n/S}{W_1(\mu_n,\mu)} \leq z + z' ) - \Pr ( \sqrt{n/S}{W_1(\mu_n,\mu)} \leq z  ) } = o(1)$ as $z' = o(1)$.
    
    An opposite inequality $\Pr ( \hat{Z}_W \leq z ) \geq \Pr ( \sqrt{n/S} {W_1(\mu_n,\mu)} \leq z ) + o(1)$ is obtained by a similar way.
    \qed

    \subsection{Proof of Theorem \ref{thm:GMB}}
    
    From Theorem 2.2 in \cite{chernozhukov2016empirical}, under the condition $K_{S,n} \leq n$.
    For every $\gamma \in (0,1)$ and $\eta>0$, 
    there exists a random variable $\tilde{Z}_W$ with fixed $\mD_n$ such that 
    \[
    P\left(\abs*{\hat{Z}_W - \tilde{Z}_W  } > c_3 (\eta + \delta^{(2)}_{S,n})/S  \right) \leq c_4 (\gamma + n^{-1}),
    \]
    where $c_3$ and $c_4$ are universal constants, and $ \delta^{(2)}_{S,n}$ is defined as follows:
    \[
    \delta^{(2)}_{S,n}=\frac{b_S K_{S,n}}{\gamma n^{1/2}} + \frac{(b_S\sigma K_{S,n}^{3/2})^{1/2}}{\gamma n^{1/4}}.
    \]
    The rest of this proof follows a similar way in the proof of Theorem \ref{thm:GAR_kolmogorov_2} and Corollary \ref{cor:main}.
    \qed

    \subsection{Proof of Theorem \ref{thm:one_test}}
    
    \begin{proof}
     We will show that the one-sample test has asymptotically size $\alpha$, i.e.,
    \begin{align*}
    \Pr\left(\sqrt{\frac{n}{S_n}} {W_1(\mu_n,\mu_0)} > \tilde{q}(1-\alpha) \right) =  \alpha + o(1), ~~(n \to \infty).
    \end{align*}
    
    Under the null hypothesis $H_0: \mu = \mu_0$, 
    we approximate the asymptotic distribution of the test statistic {$W_1(\mu_n, \mu_0)$} by the distribution of the corresponding multiplier bootstrap process, i.e., $\tilde{Z}_W$.
    From Theorem \ref{thm:GMB}, we have 
    \begin{align*}
    \Pr\left(\sqrt{\frac{n}{S_n}} {W_1(\mu_n,\mu_0)} > \tilde{q}(1-\alpha) \right) & = \Pr\left(\tilde{Z}_W > \tilde{q}(1-\alpha) \mid \mD_n^X\right) + o(1)\\
    & = \alpha + o(1),
    \end{align*}
    since $\tilde{q}(1-\alpha)$ is the $1-\alpha$ quantile of the bootstrap distribution $\tilde{Z}_W$.
    \end{proof}

    \subsection{Proof of Theorem \ref{thm:two_test}}

    \begin{proof}
    In the following, we consider $\lambda_{n,m} \to \lambda \in (0,1)$ as $n,m \to \infty$ and fix $r \in (0,1)$ arbitrary.
     We will show that the two-sample test has asymptotically size $\alpha$, i.e.,
    \begin{equation}\label{eq:null_one}
        \Pr \left(\sqrt{\frac{\rho_{n,m}}{S_n \wedge S_m}} {W_1(\mu_n,\nu_n)} > \breve{q}(1-\alpha) \right) \leq \alpha +o(1), ~~(n,m \to \infty).
    \end{equation}
     Under the null hypothesis $H_0: \mu = \nu$, 
    we will approximate the asymptotic distribution of the test statistic $W(\mu_n, \nu_m)$ by the distribution of the sum of two multiplier bootstrap processes, i.e., $\sqrt{\lambda_{n,m}}\tilde{Z}_{X,W} + \sqrt{1-\lambda_{n,m}}\tilde{Z}_{Y,W}$. 
    To this end, we use the triangle inequality
    {
    \begin{align*}
    W_1(\mu_n,\nu_n) &\leq  W_1(\mu_n,\mu) +  W_1(\nu_n,\nu) +  W_1(\mu,\nu)\\
    & = W_1(\mu_n,\mu) +  W_1(\nu_n,\nu).
    \end{align*}
    }
    Then, we have
    \begin{align*}
    &\Pr \left(\sqrt{\frac{\rho_{n,m}}{S_n \wedge S_m}} {W_1(\mu_n,\nu_n)} > \breve{q}(1-\alpha) \right)\\
    & \leq \Pr \left(\sqrt{\frac{\rho_{n,m}}{S_n \wedge S_m}} {W_1(\mu_n,\mu)} +  \sqrt{\frac{\rho_{n,m}}{S_n \wedge S_m}} W(\nu_n,\nu) > \breve{q}(1-\alpha) \right).\\
    \end{align*} 
    Since $\rho_{n,m} = (nm)/(m+n)$ and $\lambda_{n,m} := m/(n+m)$, {
    \begin{align*}
       \sqrt{\frac{\rho_{n,m}}{S_n \wedge S_m}} W_1(\mu_n,\mu) + \sqrt{\frac{\rho_{n,m}}{S_n \wedge S_m}}W_1(\nu_n,\nu) =  \sqrt{\lambda_{n,m}} W_1(\mu_n,\mu) + \sqrt{1-\lambda_{n,m}} W_1(\nu_m, \nu).
    \end{align*}}
    Then, we apply Theorem \ref{thm:GMB} to $ \sqrt{\frac{n}{S_n}} {W_1(\mu_n,\mu)}$ and $\sqrt{\frac{m }{S_n}} {W_1(\nu_m, \nu)}$ and obtain
    \begin{align*}
        &\Pr \left(\sqrt{\frac{\rho_{n,m}}{S_n \wedge S_m}} {W_1(\mu_n,\mu)} +  \sqrt{\frac{\rho_{n,m}}{S_n \wedge S_m}} {W_1(\nu_n,\nu)} > \breve{q}(1-\alpha) \right) \\& \leq   \Pr\left( \sqrt{\lambda_{n,m}} \tilde{Z}_{X, W}> \breve{q}_X(1-r\alpha)  \mid \mD_n^X \right) \\
        & \quad + \Pr\left( \sqrt{1-\lambda_{n,m}} \tilde{Z}_{Y, W}>   \breve{q}_Y(1-(1-r)\alpha)   \mid  \mD_m^Y\right)+ o(1)\\
        & \leq  \alpha + o(1).
    \end{align*}
    \end{proof}

\section{Additional Experiments}

\subsection{Numerical analysis for CDF of Wasserstein distances} \label{sec:asmp1}

We numerically validate the anti-concentration assumption of {$W_1(\mu_n,\mu)$} (Assumption \ref{asmp:cont_cdf}). To this end, we consider six settings described in Figure \ref{image:converge} and \ref{image:converge2}. In both of the figures, the top left panel shows a case when $\mu$ is the standard normal distribution. The top center panel shows a case when $\mu$ is the Laplace distribution with location parameter 0 and scale parameter 1. The top right panel shows a case when $\mu$ is a Gaussian mixture. The bottom left shows a case when $\mu$ is multinomial distribution with parameters $(p_1, p_2, p_3, p_4) = (1/20, 3/20, 6/20, 10/20)$. The bottom center panel shows a case when $\mu$ is the Poisson distribution with parameter $\lambda = 4$. The bottom right panel shows a case when $\mu$ is the 3-dimensional multivariate normal distribution with mean zero and identity covariance matrix. The Wasserstein distance is computed using the \texttt{R} package \texttt{transport}. The number of repetitions is 2000, and the sample size is 1000 for each case.

We plot the probability density function/histogram of {$W_1(\mu_n, \mu)$} in Figure \ref{image:converge}. The numerical experiments show that the probability density function of {$W_1(\mu_n, \mu)$} is likely to be bounded. Moreover, we compute the constant factor of Assumption \ref{image:converge2}, $C_r := \Pr(r \leq {W_1(\mu_n, \mu)} \leq r + \delta)/\delta$ for {$\delta \in \{2^{-1}, 2^{-2}, 2^{-3}, 2^{-4}, 2^{-5}, 2^{-6}, 2^{-7}\}$.}, respectively. The Figure \ref{image:converge2} indicates that the constant factor $C_r$ does not tend to be large. These two results sufficiently support our assumption numerically.

\begin{figure}[htbp]
    \centering
    \begin{minipage}{1\hsize}
        \centering
        \includegraphics[width=.450\linewidth]{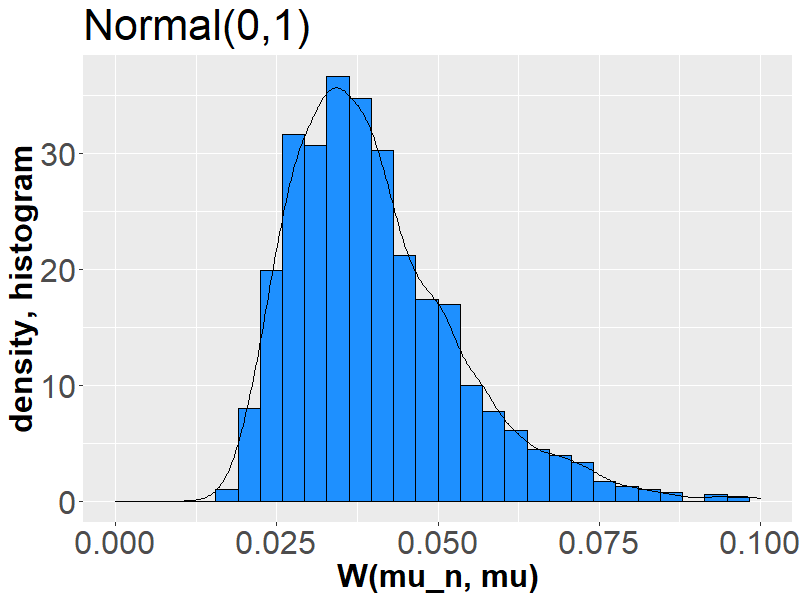}
        \includegraphics[width=.450\linewidth]{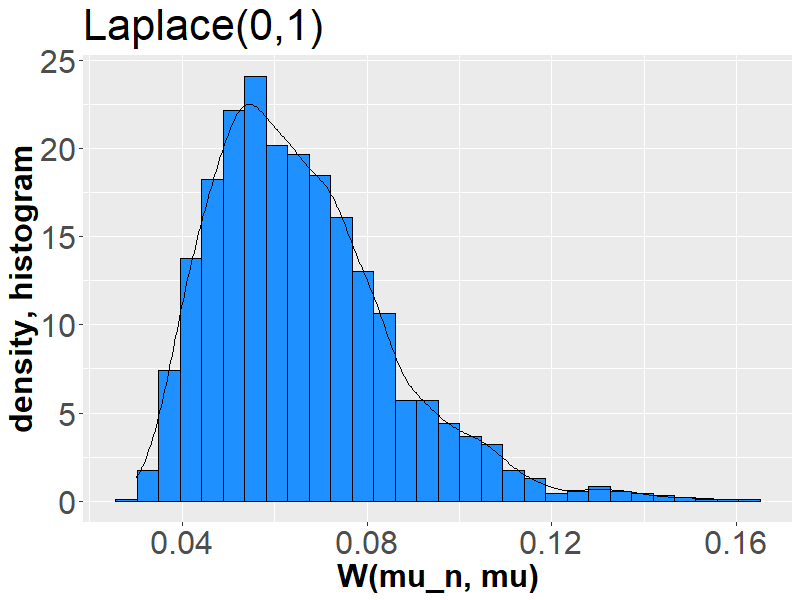}\\
        \includegraphics[width=.450\linewidth]{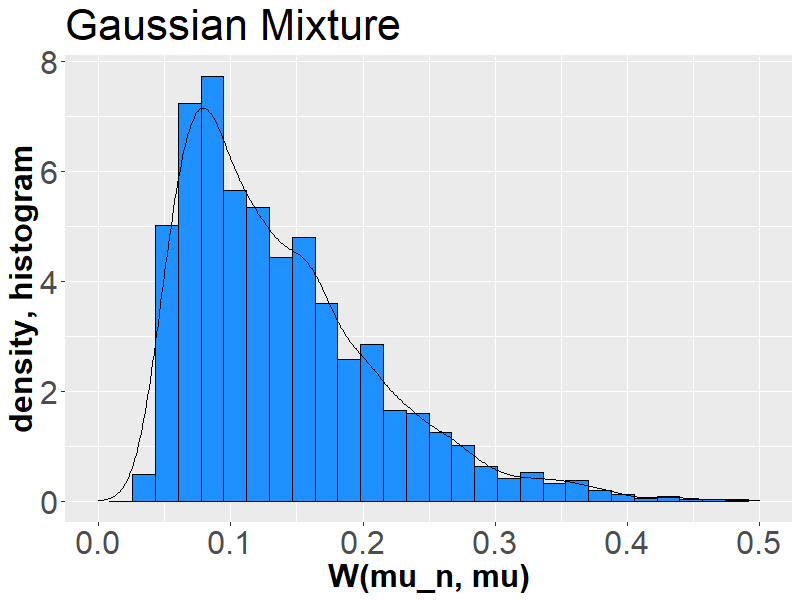}
        \includegraphics[width=.450\linewidth]{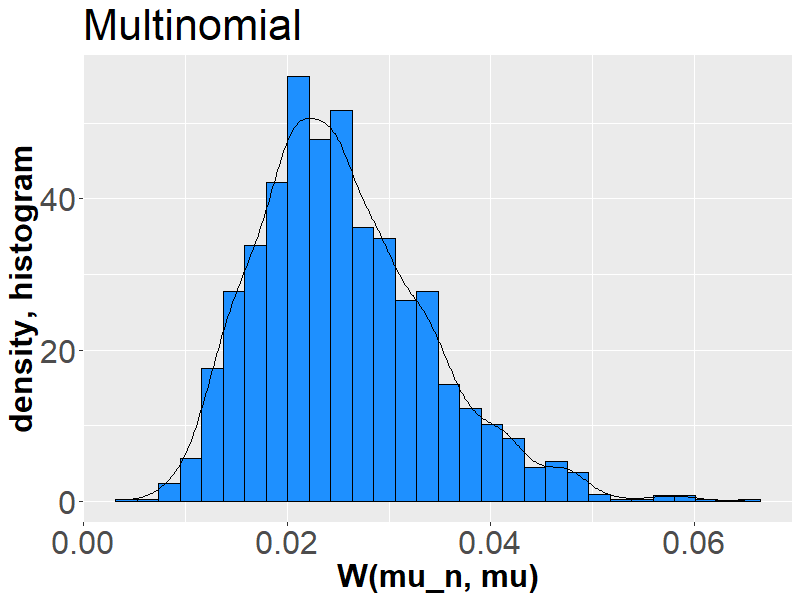}\\
        \includegraphics[width=.450\linewidth]{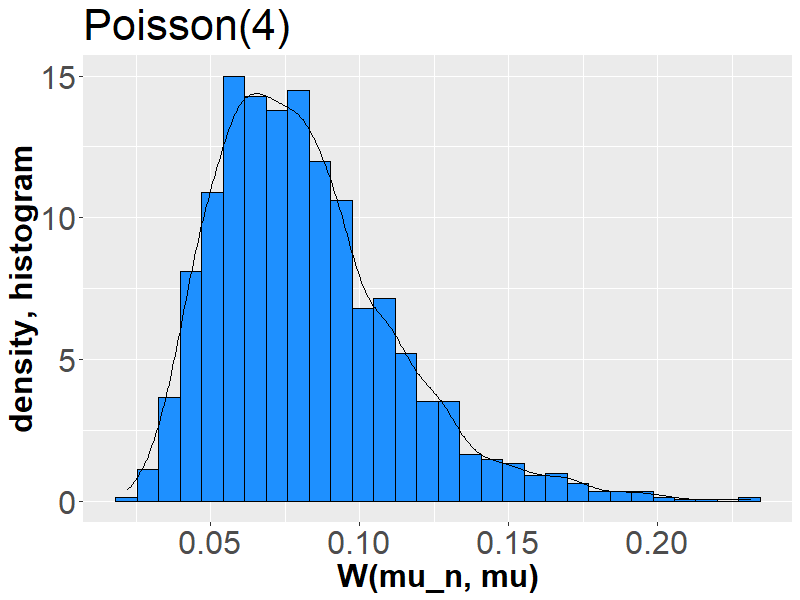}
        \includegraphics[width=.450\linewidth]{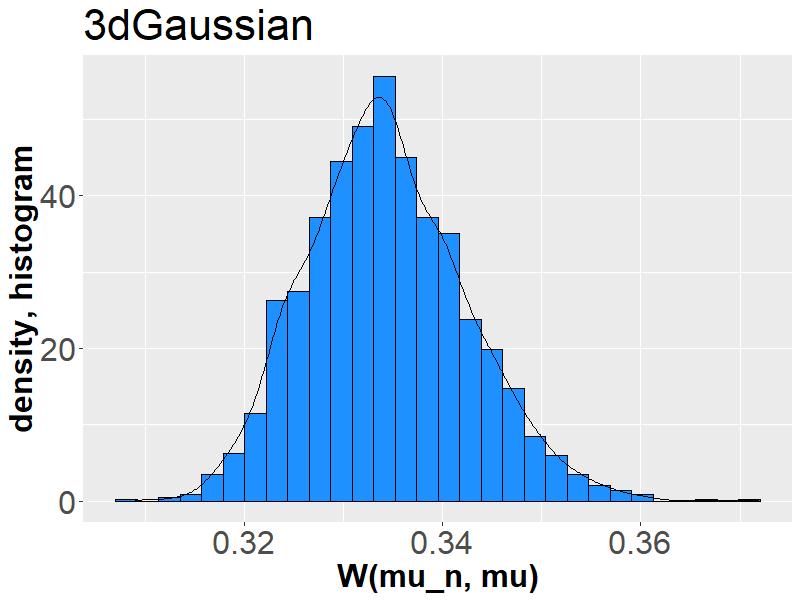}
        \caption{Density/histogram plots of {$W_1(\mu_n, \mu)$}.\label{image:converge}}
    \end{minipage}
\end{figure}

\begin{figure}[htbp]
    \centering
    \begin{minipage}{1\hsize}
        \centering
        \includegraphics[width=.450\linewidth]{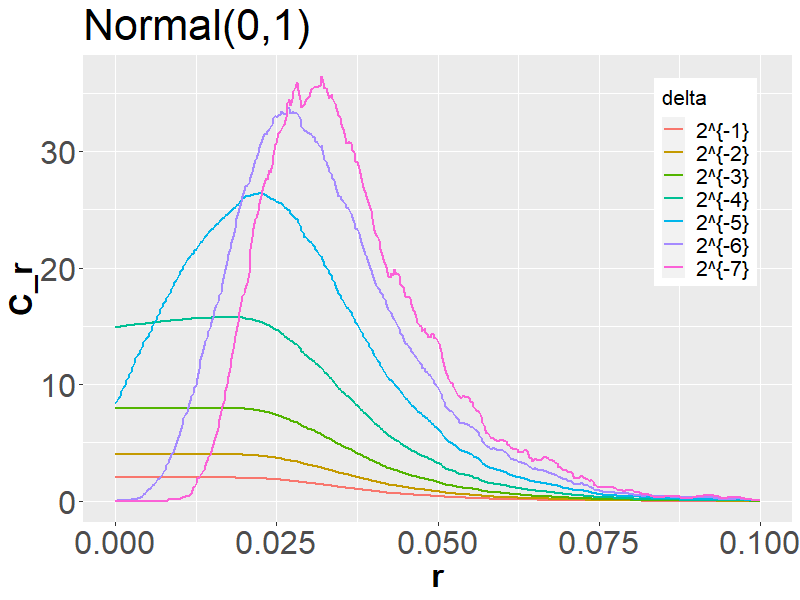}
        \includegraphics[width=.450\linewidth]{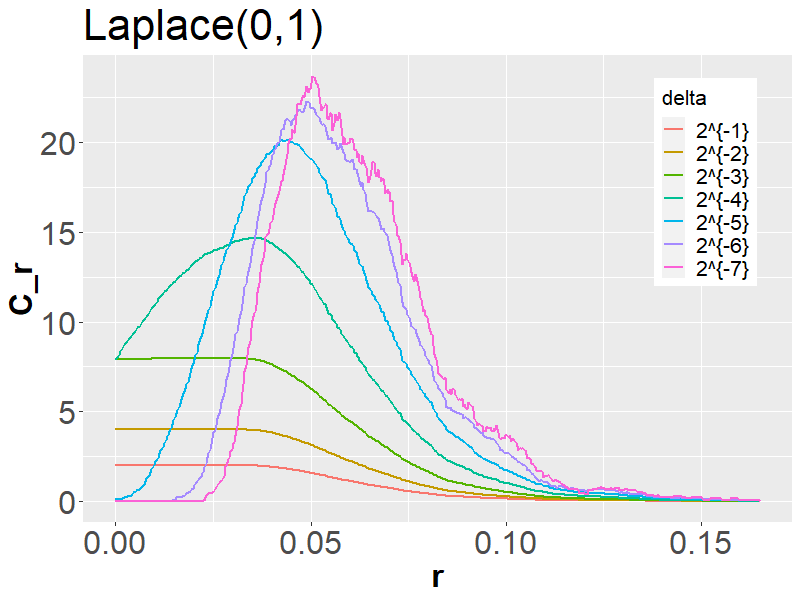}\\
        \includegraphics[width=.450\linewidth]{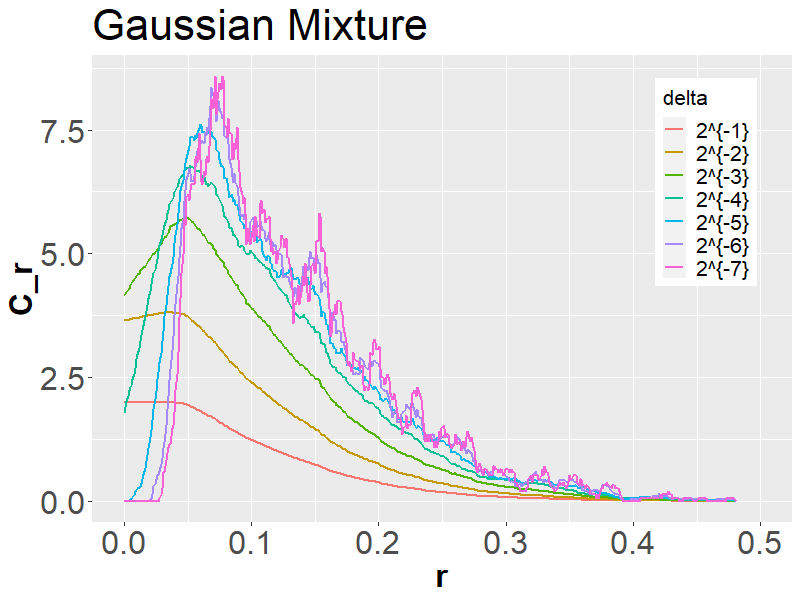}
        \includegraphics[width=.450\linewidth]{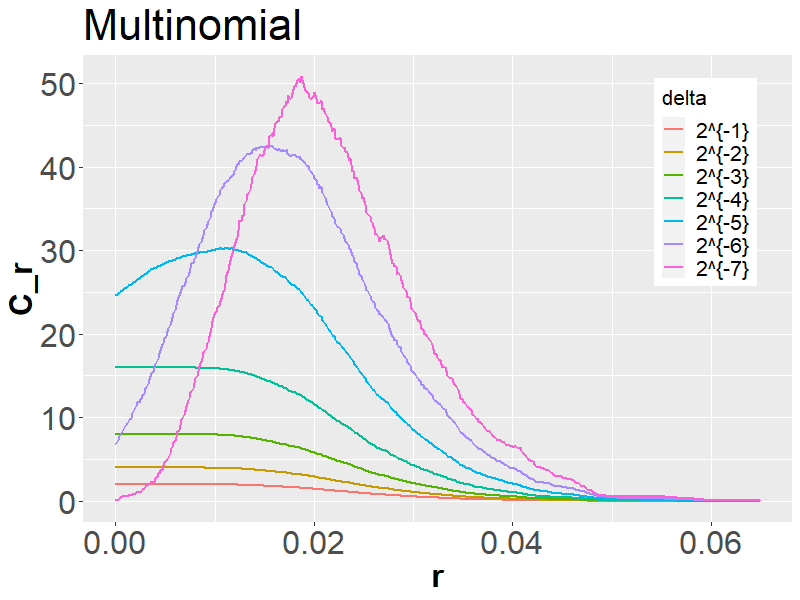}\\
        \includegraphics[width=.450\linewidth]{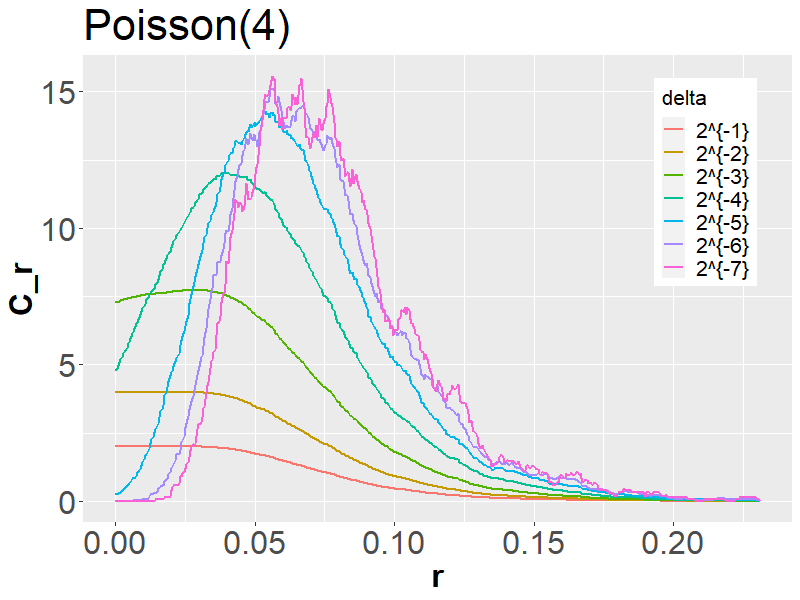}
        \includegraphics[width=.450\linewidth]{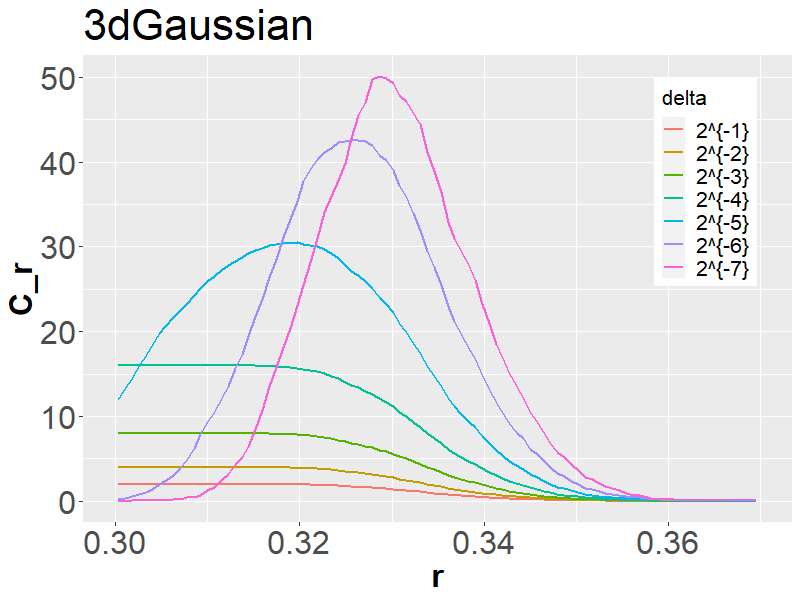}
        \caption{{Plots of $ C_r := \Pr(r \leq W(\mu,\mu_n) \leq r+\delta)$ against $r$, for each $\delta \in \{2^{-1}, 2^{-2}, 2^{-3}, 2^{-4}, 2^{-5}, 2^{-6}, 2^{-7}\}$.}} \label{image:converge2}
    \end{minipage}
\end{figure}

{
\subsection{Comparison in Univariate Case}

We analyze whether the proposed method is consistent with existing studies in the univariate setting. 
As described in Section \ref{sec:related}, the asymptotic distribution of the empirical Wasserstein distance for $d = 1$ is known analytically. 
We verify whether our proposed method is consistent in this setting by simulation.

We numerically reproduce the distribution of the empirical Wasserstein distance by a baseline from existing studies and by the proposed method.
For the true distribution $\mu$, as in Section \ref{sec:exp_validation}, we use three types of distributions: a normal, an exponential, and a mixture of Gaussians. 
For the generation of the distribution by the proposed method, we adopt exactly the same setup as in Section \ref{sec:exp_validation}. 
For the baseline approach, we compute the distribution numerically by repeating the data generation $2000$ times based on the asymptotic distribution in \cite{del2019central}.

Fig \ref{fig:univariate} shows density functions of the derived distribution.
The densities are computed by the kernel density estimation as in Section \ref{sec:exp_validation}. 
In all the cases, the baseline and the proposed method have distributions that are close enough.

\begin{figure}
    \centering    \includegraphics[width=0.32\hsize]{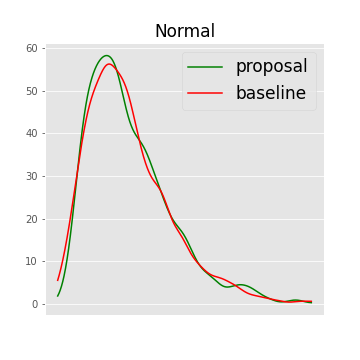}
    \includegraphics[width=0.32\hsize]{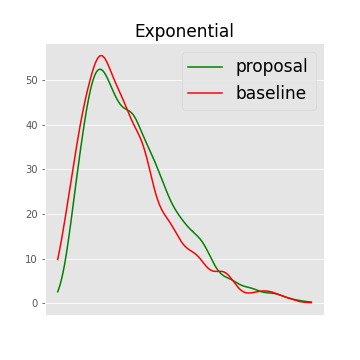}
    \includegraphics[width=0.32\hsize]{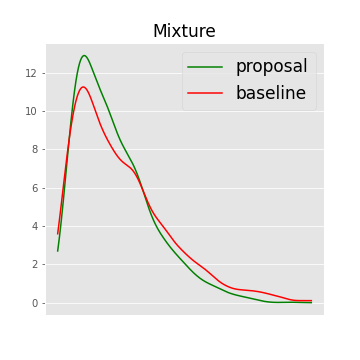}
    \caption{{Density of distributions of the empirical Wasserstein distance with $d=1$. The green curve is our proposal, and the red curve is the baseline derived from \citet{del2019central}.}}
    \label{fig:univariate}
\end{figure}

\subsection{Effect from Neural Network Architecture} \label{sec:select_architecture}

{
We verify the robustness of our proposed method under different neural network architectures by testing whether the proposed bootstrap procedure can obtain a coherent distribution. 
In the theoretical result (Lemma \ref{lem:intermediateGAR}, the existence of a suitable neural network is guaranteed, however, it is difficult to choose the proper architecture directly. 
Therefore, we conduct a comprehensive experiment to investigate the impact of network architecture.

The architectural choices we study are a combination of the following conditions: the number of samples $n \in \{10,100,1000\}$, the number of hidden layers $L \in  \{1,2,...,6\}$, and the dimension of intermediate variable (width) in each layer $\mathrm{dim} \in \{1,2,5,10,50,100\}$.
These combinations bring the number of architecture types to $108$.
The minimum number of parameters is $3$ and the maximum is $50300$.
In addition, although outside the scope of the theoretical result, we study the residual network (ResNet) architecture \cite{he2016deep}, and also investigated the case where the activation function is a hyperbolic tangent ($\tanh$) for both ordinary neural networks and residual networks.
For all other settings, we employ the same setting in Section \ref{sec:exp_validation}.

Figure \ref{fig:different_arch_dnn} shows density functions of the obtained distribution for approximation with neural networks with ReLU or $\tanh$ activation functions.
Figure \ref{fig:different_arch_dnn} shows the same results when using ResNet.
$n$ is the number of samples, \textit{Layers} is a number of hidden layers $L$, \textit{dim} is the dimension (width) of each layer, and \textit{Param} is the total number of parameters in neural networks.
The densities are computed by the kernel density estimation as in Section \ref{sec:exp_validation}.
We consider three different distributions of data: Gaussian, exponential, and mixture, but since the results are similar, only the results for the Gaussian case are presented.

From these results, we obtain the following implications:
(i) As long as \textit{dim} is above $50$, the proposed method is stable regardless of $n$ and \textit{Layer}.
(ii) The increase in $n$ contributes to the stabilization.
(iii) The increase of the number of hidden layers causes the instability when $n$ and \textit{dim} are small.
(iv) \textit{Param} is important, but fewer parameters still produce stable results when \textit{dim} is large.
These results are common to ordinary neural networks and residual networks, and are not affected by the choice of activation functions.
From these results, we recommend increasing the dimension of each layer and limiting the number of hidden layers. 
Also, a wide variety of network types are allowed to be selected.

}

\begin{figure}[htbp]
    \centering
    \includegraphics[width=\hsize]{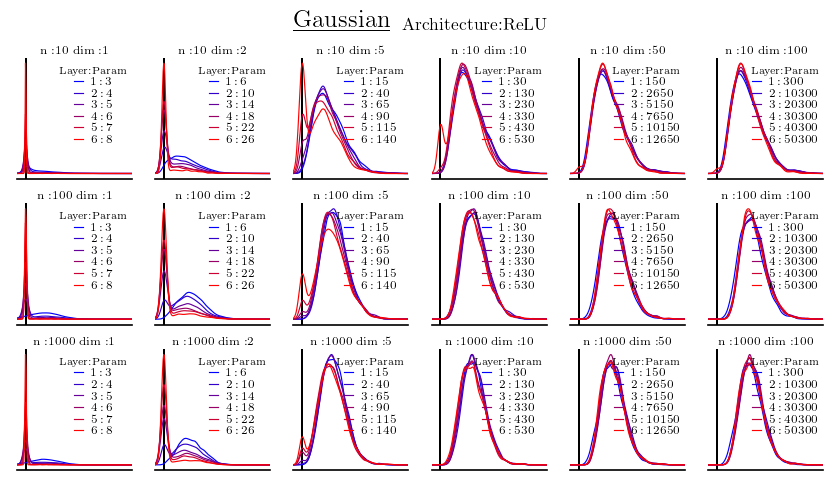}\\
    \includegraphics[width=\hsize]{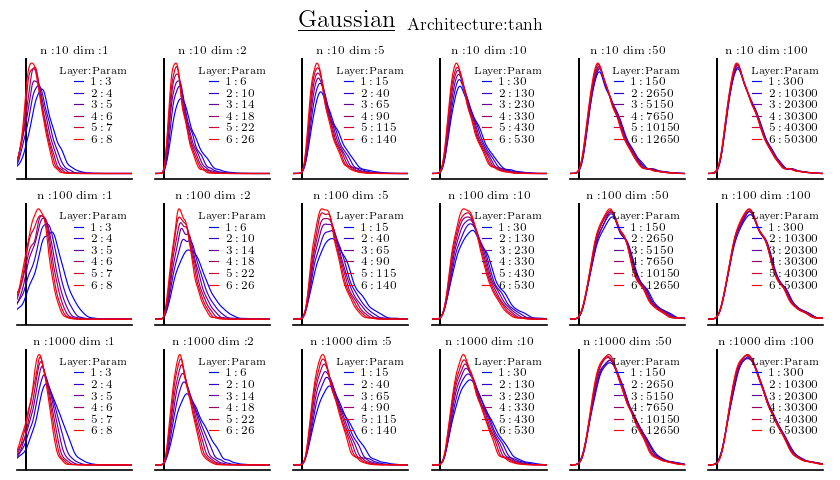}\\
    \caption{Density of distributions by our proposed method with neural networks. The samples are Gaussian, and activation functions are ReLU or $\tanh$. $n$ is the number of samples, \textit{Layers} is a number of hidden layers $L$, \textit{dim} is the dimension (width) of each layer, and \textit{Param} is the total number of parameters.}
    \label{fig:different_arch_dnn}
\end{figure}

\begin{figure}[htbp]
    \centering
    \includegraphics[width=\hsize]{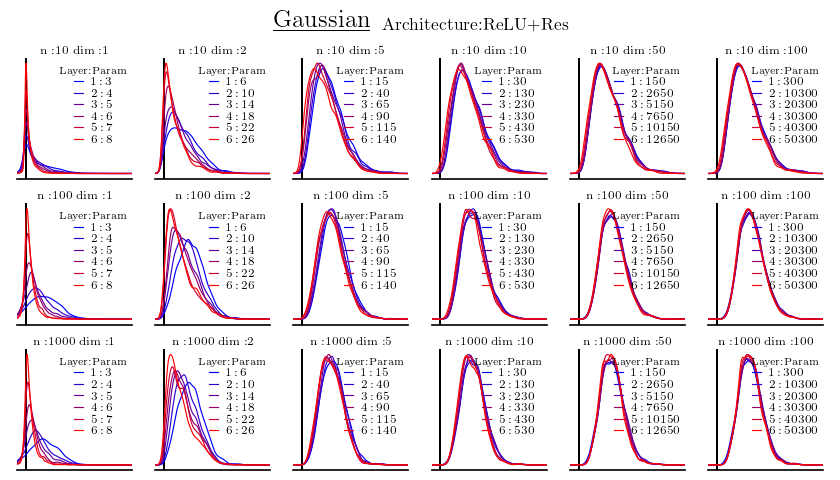}\\
    \includegraphics[width=\hsize]{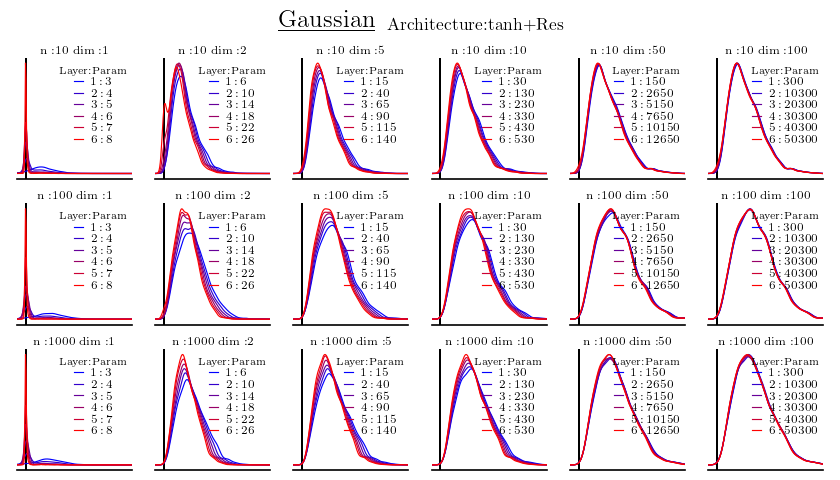}\\
    \caption{Density of distributions by our proposed method with residual networks (\textit{Res}). The samples are Gaussian, and activation functions are ReLU or $\tanh$. $n$ is the number of samples, \textit{Layers} is a number of hidden layers $L$, \textit{dim} is the dimension (width) of each layer, and \textit{Param} is the total number of parameters.}
    \label{fig:different_arch_res}
\end{figure}

}

\bibliographystyle{apa}
\bibliography{wasserstein}

\begin{thebibliography}{}

\bibitem[\protect\astroncite{Ambrosio et~al.}{2008}]{ambrosio2008gradient}
Ambrosio, L., Gigli, N., and Savar{\'e}, G. (2008).
\newblock {\em Gradient flows: in metric spaces and in the space of probability
  measures}.
\newblock Springer Science \& Business Media.

\bibitem[\protect\astroncite{Arjovsky et~al.}{2017}]{arjovsky2017wasserstein}
Arjovsky, M., Chintala, S., and Bottou, L. (2017).
\newblock Wasserstein generative adversarial networks.
\newblock In {\em International Conference on Machine Learning}, pages
  214--223. PMLR.

\bibitem[\protect\astroncite{Bentkus}{2005}]{bentkus2005lyapunov}
Bentkus, V. (2005).
\newblock {A Lyapunov-type bound in Rd}.
\newblock {\em Theory of Probability \& Its Applications}, 49(2):311--323.

\bibitem[\protect\astroncite{Bernton et~al.}{2017}]{bernton2017inference}
Bernton, E., Jacob, P.~E., Gerber, M., and Robert, C.~P. (2017).
\newblock {Inference in generative models using the Wasserstein distance}.
\newblock {\em arXiv preprint arXiv:1701.05146}.

\bibitem[\protect\astroncite{Bigot et~al.}{2017}]{bigot2017central}
Bigot, J., Cazelles, E., and Papadakis, N. (2017).
\newblock {Central limit theorems for Sinkhorn divergence between probability
  distributions on finite spaces and statistical applications}.
\newblock {\em arXiv preprint arXiv:1711.08947}.

\bibitem[\protect\astroncite{Bigot et~al.}{2019}]{bigot2019central}
Bigot, J., Cazelles, E., and Papadakis, N. (2019).
\newblock Central limit theorems for entropy-regularized optimal transport on
  finite spaces and statistical applications.
\newblock {\em Electronic Journal of Statistics}, 13(2):5120--5150.

\bibitem[\protect\astroncite{Brudnyi}{2011}]{brudnyi2011methods}
Brudnyi, A. (2011).
\newblock {\em Methods of Geometric Analysis in Extension and Trace Problems:
  Volume 1}, volume 102.
\newblock Springer Science \& Business Media.

\bibitem[\protect\astroncite{Chen and Friedman}{2017}]{chen2017new}
Chen, H. and Friedman, J.~H. (2017).
\newblock A new graph-based two-sample test for multivariate and object data.
\newblock {\em Journal of the American statistical association},
  112(517):397--409.

\bibitem[\protect\astroncite{Chen and Niles-Weed}{2021}]{chen2021asymptotics}
Chen, H.-B. and Niles-Weed, J. (2021).
\newblock Asymptotics of smoothed {W}asserstein distances.
\newblock {\em Potential Analysis}, pages 1--25.

\bibitem[\protect\astroncite{Chernozhukov
  et~al.}{2013}]{chernozhukov2013gaussian}
Chernozhukov, V., Chetverikov, D., and Kato, K. (2013).
\newblock Gaussian approximations and multiplier bootstrap for maxima of sums
  of high-dimensional random vectors.
\newblock {\em The Annals of Statistics}, 41(6):2786--2819.

\bibitem[\protect\astroncite{Chernozhukov
  et~al.}{2014}]{chernozhukov2014gaussian}
Chernozhukov, V., Chetverikov, D., and Kato, K. (2014).
\newblock Gaussian approximation of suprema of empirical processes.
\newblock {\em The Annals of Statistics}, 42(4):1564--1597.

\bibitem[\protect\astroncite{Chernozhukov
  et~al.}{2015}]{chernozhukov2015comparison}
Chernozhukov, V., Chetverikov, D., and Kato, K. (2015).
\newblock Comparison and anti-concentration bounds for maxima of gaussian
  random vectors.
\newblock {\em Probability Theory and Related Fields}, 162(1-2):47--70.

\bibitem[\protect\astroncite{Chernozhukov
  et~al.}{2016}]{chernozhukov2016empirical}
Chernozhukov, V., Chetverikov, D., and Kato, K. (2016).
\newblock Empirical and multiplier bootstraps for suprema of empirical
  processes of increasing complexity, and related gaussian couplings.
\newblock {\em Stochastic Processes and their Applications},
  126(12):3632--3651.

\bibitem[\protect\astroncite{Cohen et~al.}{1993}]{cohen1993wavelets}
Cohen, A., Daubechies, I., and Vial, P. (1993).
\newblock Wavelets on the interval and fast wavelet transforms.
\newblock {\em Applied and computational harmonic analysis}.

\bibitem[\protect\astroncite{Del~Barrio et~al.}{2000}]{del2000contributions}
Del~Barrio, E., Cuesta-Albertos, J.~A., Matr{\'a}n, C., Cs{\"o}rg{\"o}, S.,
  Cuadras, C.~M., de~Wet, T., Gin{\'e}, E., Lockhart, R., Munk, A., and Stute,
  W. (2000).
\newblock Contributions of empirical and quantile processes to the asymptotic
  theory of goodness-of-fit tests.
\newblock {\em Test}, 9(1):1--96.

\bibitem[\protect\astroncite{del Barrio et~al.}{1999}]{del1999tests}
del Barrio, E., Cuesta-Albertos, J.~A., Matr{\'a}n, C., and
  Rodr{\'\i}guez-Rodr{\'\i}guez, J.~M. (1999).
\newblock {Tests of goodness of fit based on the L2-Wasserstein distance}.
\newblock {\em Annals of Statistics}, pages 1230--1239.

\bibitem[\protect\astroncite{Del~Barrio et~al.}{2019}]{del2019central}
Del~Barrio, E., Gordaliza, P., Lescornel, H., and Loubes, J.-M. (2019).
\newblock {Central limit theorem and bootstrap procedure for Wasserstein’s
  variations with an application to structural relationships between
  distributions}.
\newblock {\em Journal of Multivariate Analysis}, 169:341--362.

\bibitem[\protect\astroncite{Del~Barrio and Loubes}{2019}]{del2019central2}
Del~Barrio, E. and Loubes, J.-M. (2019).
\newblock Central limit theorems for empirical transportation cost in general
  dimension.
\newblock {\em The Annals of Probability}, 47(2):926--951.

\bibitem[\protect\astroncite{Dick et~al.}{2013}]{dick2013high}
Dick, J., Kuo, F.~Y., and Sloan, I.~H. (2013).
\newblock {High-dimensional integration: the quasi-Monte Carlo way}.
\newblock {\em Acta Numerica}, 22:133.

\bibitem[\protect\astroncite{Dudley}{2002}]{dudley_2002}
Dudley, R.~M. (2002).
\newblock {\em Real Analysis and Probability}.
\newblock Cambridge Studies in Advanced Mathematics. Cambridge University
  Press, 2 edition.

\bibitem[\protect\astroncite{Evans and Matsen}{2012}]{evans2012phylogenetic}
Evans, S.~N. and Matsen, F.~A. (2012).
\newblock {The phylogenetic Kantorovich--Rubinstein metric for environmental
  sequence samples}.
\newblock {\em Journal of the Royal Statistical Society: Series B (Statistical
  Methodology)}, 74(3):569--592.

\bibitem[\protect\astroncite{Fournier and Guillin}{2015}]{fournier2015rate}
Fournier, N. and Guillin, A. (2015).
\newblock {On the rate of convergence in Wasserstein distance of the empirical
  measure}.
\newblock {\em Probability Theory and Related Fields}, 162(3-4):707--738.

\bibitem[\protect\astroncite{Frogner et~al.}{2015}]{frogner2015learning}
Frogner, C., Zhang, C., Mobahi, H., Araya, M., and Poggio, T.~A. (2015).
\newblock {Learning with a Wasserstein loss}.
\newblock In {\em Advances in Neural Information Processing Systems}, pages
  2053--2061.

\bibitem[\protect\astroncite{Goldfeld et~al.}{2020}]{goldfeld2020asymptotic}
Goldfeld, Z., Greenewald, K., and Kato, K. (2020).
\newblock Asymptotic guarantees for generative modeling based on the smooth
  {W}asserstein distance.
\newblock {\em Advances in neural information processing systems}.

\bibitem[\protect\astroncite{Gretton et~al.}{2012}]{gretton2012kernel}
Gretton, A., Borgwardt, K.~M., Rasch, M.~J., Sch{\"o}lkopf, B., and Smola, A.
  (2012).
\newblock A kernel two-sample test.
\newblock {\em Journal of Machine Learning Research}, 13(Mar):723--773.

\bibitem[\protect\astroncite{Hallin et~al.}{2021}]{hallin2021multivariate}
Hallin, M., Mordant, G., and Segers, J. (2021).
\newblock Multivariate goodness-of-fit tests based on {W}asserstein distance.
\newblock {\em Electronic Journal of Statistics}, 15(1):1328--1371.

\bibitem[\protect\astroncite{He et~al.}{2016}]{he2016deep}
He, K., Zhang, X., Ren, S., and Sun, J. (2016).
\newblock Deep residual learning for image recognition.
\newblock In {\em Proceedings of the IEEE conference on computer vision and
  pattern recognition}, pages 770--778.

\bibitem[\protect\astroncite{Imaizumi and Fukumizu}{2019}]{imaizumi2019deep}
Imaizumi, M. and Fukumizu, K. (2019).
\newblock Deep neural networks learn non-smooth functions effectively.
\newblock In {\em The 22nd international conference on artificial intelligence
  and statistics}, pages 869--878. PMLR.

\bibitem[\protect\astroncite{Imaizumi and
  Fukumizu}{2020}]{imaizumi2020advantage}
Imaizumi, M. and Fukumizu, K. (2020).
\newblock Advantage of deep neural networks for estimating functions with
  singularity on curves.
\newblock {\em arXiv preprint arXiv:2011.02256}.

\bibitem[\protect\astroncite{Kim et~al.}{2020}]{kim2020robust}
Kim, I., Balakrishnan, S., and Wasserman, L. (2020).
\newblock Robust multivariate nonparametric tests via projection averaging.
\newblock {\em The Annals of Statistics}, 48(6):3417--3441.

\bibitem[\protect\astroncite{Kingma and Ba}{2015}]{adam_opt}
Kingma, D.~P. and Ba, J. (2015).
\newblock Adam: {A} method for stochastic optimization.
\newblock In {\em International Conference on Learning Representations 2015s}.

\bibitem[\protect\astroncite{Kosorok}{2008}]{kosorok2008introduction}
Kosorok, M.~R. (2008).
\newblock {\em Introduction to empirical processes and semiparametric
  inference.}
\newblock Springer.

\bibitem[\protect\astroncite{LeCun et~al.}{1998}]{lecun1998gradient}
LeCun, Y., Bottou, L., Bengio, Y., and Haffner, P. (1998).
\newblock Gradient-based learning applied to document recognition.
\newblock {\em Proceedings of the IEEE}, 86(11):2278--2324.

\bibitem[\protect\astroncite{Liberzon et~al.}{2011}]{liberzon2011molecular}
Liberzon, A., Subramanian, A., Pinchback, R., Thorvaldsd{\'o}ttir, H., Tamayo,
  P., and Mesirov, J.~P. (2011).
\newblock {Molecular signatures database (MSigDB) 3.0}.
\newblock {\em Bioinformatics}, 27(12):1739--1740.

\bibitem[\protect\astroncite{Lin et~al.}{2021}]{lin2021projection}
Lin, T., Zheng, Z., Chen, E., Cuturi, M., and Jordan, M. (2021).
\newblock On projection robust optimal transport: {S}ample complexity and model
  misspecification.
\newblock In {\em International Conference on Artificial Intelligence and
  Statistics}, pages 262--270. PMLR.

\bibitem[\protect\astroncite{Lloyd and Ghahramani}{2015}]{lloyd2015statistical}
Lloyd, J.~R. and Ghahramani, Z. (2015).
\newblock Statistical model criticism using kernel two sample tests.
\newblock In {\em Advances in Neural Information Processing Systems}, pages
  829--837.

\bibitem[\protect\astroncite{Massey~Jr}{1951}]{massey1951kolmogorov}
Massey~Jr, F.~J. (1951).
\newblock {The Kolmogorov-Smirnov test for goodness of fit}.
\newblock {\em Journal of the American statistical Association},
  46(253):68--78.

\bibitem[\protect\astroncite{Mena and Weed}{2019}]{mena2019statistical}
Mena, G. and Weed, J. (2019).
\newblock Statistical bounds for entropic optimal transport: sample complexity
  and the central limit theorem.
\newblock {\em Advances in Neural Information Processing Systems}.

\bibitem[\protect\astroncite{Miyato et~al.}{2018}]{miyato2018spectral}
Miyato, T., Kataoka, T., Koyama, M., and Yoshida, Y. (2018).
\newblock Spectral normalization for generative adversarial networks.
\newblock In {\em International Conference on Learning Representations}.

\bibitem[\protect\astroncite{Munk and Czado}{1998}]{munk1998nonparametric}
Munk, A. and Czado, C. (1998).
\newblock Nonparametric validation of similar distributions and assessment of
  goodness of fit.
\newblock {\em Journal of the Royal Statistical Society: Series B (Statistical
  Methodology)}, 60(1):223--241.

\bibitem[\protect\astroncite{Nadjahi et~al.}{2019}]{nadjahi2019asymptotic}
Nadjahi, K., Durmus, A., Simsekli, U., and Badeau, R. (2019).
\newblock {Asymptotic Guarantees for Learning Generative Models with the
  Sliced-Wasserstein Distance}.
\newblock {\em Advances in Neural Information Processing Systems}, 32:250--260.

\bibitem[\protect\astroncite{Ni et~al.}{2009}]{ni2009local}
Ni, K., Bresson, X., Chan, T., and Esedoglu, S. (2009).
\newblock {Local histogram based segmentation using the Wasserstein distance}.
\newblock {\em International journal of computer vision}, 84(1):97--111.

\bibitem[\protect\astroncite{Panaretos and
  Zemel}{2018}]{panaretos2018statistical}
Panaretos, V.~M. and Zemel, Y. (2018).
\newblock {Statistical aspects of Wasserstein distances}.
\newblock {\em Annual Review of Statistics and Its Application}.

\bibitem[\protect\astroncite{Ramdas et~al.}{2017}]{ramdas2017wasserstein}
Ramdas, A., Trillos, N., and Cuturi, M. (2017).
\newblock {On Wasserstein two-sample testing and related families of
  nonparametric tests}.
\newblock {\em Entropy}, 19(2):47.

\bibitem[\protect\astroncite{Rosenbaum}{2005}]{rosenbaum2005exact}
Rosenbaum, P.~R. (2005).
\newblock An exact distribution-free test comparing two multivariate
  distributions based on adjacency.
\newblock {\em Journal of the Royal Statistical Society: Series B (Statistical
  Methodology)}, 67(4):515--530.

\bibitem[\protect\astroncite{Ruttenberg
  et~al.}{2013}]{ruttenberg2013quantifying}
Ruttenberg, B.~E., Luna, G., Lewis, G.~P., Fisher, S.~K., and Singh, A.~K.
  (2013).
\newblock Quantifying spatial relationships from whole retinal images.
\newblock {\em Bioinformatics}, 29(7):940--946.

\bibitem[\protect\astroncite{Schmidt-Hieber}{2020}]{schmidt2017nonparametric}
Schmidt-Hieber, J. (2020).
\newblock {Nonparametric regression using deep neural networks with ReLU
  activation function}.
\newblock {\em The Annals of Statistics}, 48(4):1875--1897.

\bibitem[\protect\astroncite{Sommerfeld and
  Munk}{2018}]{sommerfeld2018inference}
Sommerfeld, M. and Munk, A. (2018).
\newblock {Inference for empirical Wasserstein distances on finite spaces}.
\newblock {\em Journal of the Royal Statistical Society: Series B (Statistical
  Methodology)}, 80(1):219--238.

\bibitem[\protect\astroncite{Song}{2002}]{song2002goodness}
Song, K.-S. (2002).
\newblock {Goodness-of-fit tests based on Kullback-Leibler discrimination
  information}.
\newblock {\em IEEE Transactions on Information Theory}, 48(5):1103--1117.

\bibitem[\protect\astroncite{Tameling et~al.}{2019}]{tameling2017empirical}
Tameling, C., Sommerfeld, M., and Munk, A. (2019).
\newblock {Empirical optimal transport on countable metric spaces:
  Distributional limits and statistical applications}.
\newblock {\em The Annals of Applied Probability}, 29(5):2744--2781.

\bibitem[\protect\astroncite{van~der Vaart and Wellner}{1996}]{vdVW1996}
van~der Vaart, A. and Wellner, J. (1996).
\newblock {\em Weak Convergence and Empirical Processes: With Applications to
  Statistics}.
\newblock Springer.

\bibitem[\protect\astroncite{Vaserstein}{1969}]{vaserstein1969markov}
Vaserstein, L.~N. (1969).
\newblock Markov processes over denumerable products of spaces, describing
  large systems of automata.
\newblock {\em Problemy Peredachi Informatsii}, 5(3):64--72.

\bibitem[\protect\astroncite{Vasicek}{1976}]{vasicek1976test}
Vasicek, O. (1976).
\newblock A test for normality based on sample entropy.
\newblock {\em Journal of the Royal Statistical Society: Series B
  (Methodological)}, 38(1):54--59.

\bibitem[\protect\astroncite{Villani}{2008}]{villani2008optimal}
Villani, C. (2008).
\newblock {\em Optimal transport: old and new}, volume 338.
\newblock Springer Science \& Business Media.

\bibitem[\protect\astroncite{Weed and Bach}{2019}]{weed2019sharp}
Weed, J. and Bach, F. (2019).
\newblock {Sharp asymptotic and finite-sample rates of convergence of empirical
  measures in Wasserstein distance}.
\newblock {\em Bernoulli}, 25(4A):2620--2648.

\end{thebibliography}
\end{document}